\newcommand\restr[2]{{
  \left.\kern-\nulldelimiterspace 
  #1 
  \vphantom{\big|} 
  \right|_{#2} 
  }}
\newcommand{\binomi}[2]{\genfrac{(}{)}{0pt}{}{#1}{#2}}  
\newtheorem{theorem}{Theorem}[section]
\newtheorem{lemma}[theorem]{Lemma}
\newtheorem{proposition}[theorem]{Proposition}
\newtheorem{corollary}[theorem]{Corollary}
\theoremstyle{definition}
\newtheorem{definition}[theorem]{Definition}
\newtheorem{remark}[theorem]{Remark}
\def\X{\mathbf{X}}
\begin{document}


\author{Rosa Orellana\\
\\
Dartmouth College \\
Mathematics Department \\
6188 Kemeny Hall \\
Hanover, NH 03755, USA.\\
Rosa.C.Orellana@dartmouth.edu\\
\and
Geoffrey Scott\\
\\
University of Michigan\\
Mathematics department\\
4080 East Hall, 530 Church Street\\
Ann Arbor, MI 48109, USA.\\
gsscott@umich.edu\\
}

\title{Graphs with Equal Chromatic Symmetric Functions}

\maketitle

\begin{abstract}

Stanley \cite{bST} introduced the chromatic symmetric function $\X_G$ associated to a simple graph $G$ as a generalization of the chromatic polynomial of $G$.    In this paper we present a novel technique to write $X_G$ as a linear combination of chromatic symmetric functions of smaller graphs.   We use this technique to give a sufficient condition for two graphs to have the same chromatic symmetric function.  We then construct an infinite family of pairs of unicyclic graphs with the same chromatic symmetric function, answering the question posed by Martin, Morin, and Wagner \cite{bMMA} of whether such a pair exists.  Finally, we approach the problem of whether it is possible to determine a tree from its chromatic symmetric function. Working towards an answer to this question, we give a classification theorem for single-centroid trees in terms of data closely related to its chromatic symmetric function.
\medskip

\noindent{\sl {\bf Keywords:} chromatic symmetric function; graph coloring; unicyclic graphs; trees}
\end{abstract}


\section*{Introduction}

In 1995, Stanley \cite{bST} introduced a symmetric function ${\bf X}_G = {\bf X}_G(x_1, x_2, \ldots)$ associated with any simple graph $G$ (see Section 1 for a precise definition) called the \emph{chromatic symmetric function} of $G$.  ${\bf X}_G$  has the property that when we specialize the variables to $x_1=\cdots = x_k=1$ and $x_i=0$ for all $i>k$ then ${\bf X}_G$ gives the number of ways to properly color the vertices of $G$ with $k$ colors.   Hence ${\bf X}_G(1,1,\ldots, 1, 0, \ldots)= \chi_G(k)$, where $\chi_G$ is the chromatic polynomial of $G$. 

One of the first questions posed by Stanley was whether ${\bf X}_G$ determines $G$.  As expected this is not the case, and Stanley provides the example of the kite and the bowtie as nonisomorphic graphs with the same ${\bf X}_G$ \cite[Fig. 1]{bST}.   Although two nonisomorphic graphs may share the same chromatic symmetric function, Stanley conjectured that two nonisomorphic trees must have distinct chromatic symmetric functions.  This conjecture is claimed to be true for trees with fewer than 23 vertices. This claim is found in the introduction of \cite{bMMA} and they cite Li-Yang Tang; however, the website containing this information is no longer available. Evidence that Stanley's conjecture is true has been found by Morin \cite{bMo} and Fougere \cite{bFo} who showed that some families of trees are determined by the chromatic symmetric function.   Martin, Morin and Wagner \cite{bMMA}  showed that the degree sequence and path sequence of a tree, $T$,  can be obtained  from  ${\bf X}_T$. They also show that some families of trees, called caterpillars and spiders, can be determined from their chromatic symmetric function. 

A fundamental property of the chromatic polynomial is the \emph{deletion-contraction} property, which allows us to write $\chi_G(k)$ as a linear combination of the chromatic polynomial of graphs with fewer edges. This property is the basis for inductive proofs of many other properties of the chromatic polynomial.  Unfortunately ${\bf X}_G$ does not satisfy a deletion-contraction law which makes it difficult to apply the useful technique of induction.   Gebhard and Sagan \cite{bGS} introduced a non-commutative version of $\X_G$ that satisfies the deletion-contraction property and is a complete invariant of graphs. 
One of our results is a novel technique to decompose  ${\bf X}_G$ as a linear combination of chromatic symmetric functions of other graphs. And in the case when $G$ has a triangle we can write $X_G$ as a linear combination of chromatic symmetric functions of graphs  with fewer edges than $G$. 

There are many properties of $G$ that can be recovered from ${\bf X}_G$. These include the number of vertices, the number of connected components, the number of matchings, and the girth.  We have found that the number of triangles in $G$ can also be recovered from ${\bf X}_G$.   In the case that the graph is a tree, $T$, a lot more can be recovered from ${\bf X}_T$; for example,  the degree sequence can be recovered from ${\bf X}_T$ \cite{bMMA}.  This is no longer true for general graphs, we provide an example of a pair of non-isomorphic graphs with the same ${\bf X}_G$ but different degree sequences.  Although, the degree sequence can no longer be recovered from ${\bf X}_G$ for arbitrary $G$, we show that the sum of the squares of the degrees can be recovered from ${\bf X}_G$.  This is a generalization of a result in \cite{bFo} that shows the analogous result for trees. 

In \cite{bMMA} the authors showed that ${\bf X}_G$ is an complete invariant for two special families of unicyclic graphs and ask whether there exists a pair of unicyclic graphs with the same ${\bf X}_G$.   We answer this question in the affirmative by giving a pair of unicyclic graphs with the same chromatic symmetric function.   In fact, our Theorem \ref{P1} gives a sufficient condition for two graphs to have the same chromatic symmetric function.  We apply this theorem to construct infinitely many pairs of unicyclic graphs with the same ${\bf X}_G$.  The same technique can also be used to construct pairs of general graphs with the same ${\bf X}_G$.  We have also studied trees and we give a classification theorem for trees with one centroid.  This classification arose from our study of the chromatic symmetric function of a tree when written in the power-sum symmetric basis.   

Our paper is organized as follows. In Section 1 we review background information, set up notation, and define the chromatic symmetric function.   In Section 2 we look at properties of $G$ that are determined by $\X_G$ for general graphs. In particular, we show that the sum of the squares of the degrees as well as the number of triangles in a graph can be recovered from the chromatic symmetric function.   In Section 3 we show how the chromatic symmetric function of a graph can be written as a linear combination of other chromatic symmetric functions.   In Section 4 we focus our attention on unicyclic graphs.  We also prove a sufficient condition for two graphs to have the same chromatic symmetric function and show how to construct pairs of graphs with the same ${\bf X}_G$.  In our last section, Section 5,  we prove a classification theorem for trees with a single centroid that is closely related to the coefficients of the chromatic symmetric function when written in the power-sum symmetric basis.


\section{Preliminaries}
We assume that the reader is familiar with the basic facts about graphs found in any introductory graph theory book (see e.g., \cite{bBo, bDI,bHA}).  In this section we establish notation that will be used throughout the paper.   A graph $G$ is an ordered pair $(V,E)$, where $V=V(G)$ is the vertex set and $E=E(G)$ is the edge set.  All our graphs are \emph{simple}, i.e., we do not allow loops or multiple edges.   The number of vertices $\#V(G)$ is called the \emph{order} of the graph. 
We will write $uv$ for the edge joining the vertices $u, v \in V(G)$ if such an edge exists. We say that $u$ and $v$ are {\em endpoints} of $uv$, that $uv$ is {\em incident} to $u$ and $v$, and that $u$ is {\em adjacent} to $v$. If two edges have no endpoints in common, they are {\em disjoint}.   The \emph{degree} $d(v)$ of a vertex $v$ is the number of edges incident to $v$. The \emph{degree sequence} of a graph $G$ is the sequence $(d(v))_{v \in V(G)}$.  An \emph{isolated} vertex is a  vertex of degree 0. A \emph{leaf} is a vertex of degree 1.   The \emph{girth} of a graph is the number of distinct vertices in a shortest cycle in the graph.  An acyclic graph has infinite girth.

A \emph{subgraph} $G^\prime \subseteq G$ of a graph $G = (V(G), E(G))$ is a graph $G^\prime = (V^\prime(G), E^\prime(G))$ such that $V^\prime(G) \subseteq V(G)$ and $E^\prime(G) \subseteq E(G)$. A subgraph is said to be \emph{induced} by the vertex set $V^\prime(G)$ if every edge in $E(G)$ having endpoints in $V^\prime(G)$ is also in $E^\prime(G)$.   A subgraph $H$ is a \emph{spanning subgraph} of $G$ if it has the same vertex set as $G$.
A subgraph is said to be a \emph{matching} of size $k$ if it consists of $k$ disjoint edges on $2k$ vertices. 

In this paper we are interested in certain classes of simple graphs. A graph is called \emph{unicyclic} if it contains exactly one cycle, a \emph{forest} if it contains no cycles, and a \emph{tree} if it is a connected forest.  Notice that a connected unicyclic graph with $n$ vertices has $n$ edges.

In the following proposition we summarize some well-known facts about trees.   The reader may refer to \cite{bBo,bDI,bHA}  or any other introductory graph theory textbook for proofs of these facts.

\begin{proposition}[\cite{bBo}, pp. 99-100]
\begin{enumerate}
\item[(1)] In a tree, any two vertices are connected by exactly one path.
\item[(2)] Every tree on $n$ vertices has $n-1$ edges. In general, a forest on $n$ vertices with $c$ connected components has $n-c$ edges.
\item[(3)] Every nontrivial tree has at least two leaves. In general, if a forest contains $c$ connected nontrivial components, then it contains at least $2c$ leaves.
\end{enumerate}
\end{proposition}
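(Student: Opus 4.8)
The plan is to prove the three parts in the order (1), (3), (2): the induction that establishes (2) needs a leaf to delete, and the existence of a leaf is exactly part (3), which in turn can be obtained by a self-contained extremal argument using neither (1) nor (2). For (1), connectedness of a tree gives at least one $u$--$v$ path. For uniqueness, suppose $P$ and $Q$ are distinct $u$--$v$ paths; since both are simple and start at $u$, they agree on an initial segment and then first split at some vertex $a$. Let $w$ be the first vertex of $P$ after $a$ that lies again on $Q$ (such a $w$ exists, e.g.\ $w = v$). Then the segment of $P$ from $a$ to $w$ and the segment of $Q$ from $a$ to $w$ meet only at $a$ and $w$, so their union is a cycle in $T$, contradicting acyclicity. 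Hence the path is unique.

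For (3), let $T$ be a nontrivial tree and choose a path $P = v_0 v_1 \cdots v_k$ of maximum length in $T$; this exists since $T$ is finite, and $k \ge 1$ since $T$ is connected and has an edge. I claim $v_0$ is a leaf. Otherwise $v_0$ has a neighbor $u \ne v_1$: if $u \notin \{v_1, \dots, v_k\}$ then $u v_0 v_1 \cdots v_k$ is a longer path, contradicting maximality, and if $u = v_j$ with $j \ge 2$ then $v_0 v_1 \cdots v_j v_0$ is a cycle, contradicting acyclicity. So $d(v_0) = 1$, and symmetrically $d(v_k) = 1$; since $v_0 \ne v_k$, this exhibits two leaves. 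For a forest with $c$ nontrivial components, applying this to each component yields at least $2c$ leaves.

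For (2), induct on the order $n$ of the tree $T$. If $n = 1$, then $T$ has $0 = n - 1$ edges. If $n \ge 2$, then by (3) there is a leaf $v$; the graph $T - v$ is still connected and acyclic, hence a tree on $n - 1$ vertices, so by the inductive hypothesis it has $n - 2$ edges, and restoring $v$ with its single incident edge shows $T$ has $n - 1$ edges. A forest on $n$ vertices with components of orders $n_1, \dots, n_c$ then has $\sum_{i=1}^{c}(n_i - 1) = n - c$ edges.

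The only point requiring care is the dependency ordering just described — ensuring the proof of (2) does not circularly invoke (2) through (3), which is why (3) is proved first by the longest-path argument — together with verifying in (1) that the union of the two path-segments between $a$ and $w$ is genuinely a cycle rather than merely a closed walk, which is exactly why $w$ is chosen as the \emph{first} return to $Q$. No step here is deep; the content is entirely this bookkeeping plus the extremal argument.
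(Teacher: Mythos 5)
Your proof is correct. The paper does not prove this proposition at all --- it is quoted from Bondy--Murty with an explicit pointer to the textbook for proofs --- so there is nothing internal to compare against; your arguments (uniqueness of paths via a first-divergence/first-return cycle, leaves via endpoints of a longest path, and edge count by leaf-deletion induction) are exactly the standard textbook proofs, and your care about the dependency order and about the cycle being genuine rather than a closed walk is sound.
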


We now give two definitions that are not as standard as the others we have given so far.  We will use these definitions in Section 5.  For further reading on these concepts see \cite{bHA}. 

\begin{definition}
The {\bf weight} of a vertex $v$ of a tree $T$ is the maximal number of edges in any subtree of $T$ containing $v$ as a leaf.
\end{definition}
\begin{definition}
The {\bf centroid} of a tree $T$ is the set of all vertices of $T$ having minimum weight.
\end{definition}
An example of the weights of vertices of a tree is shown in Figure \ref{fig:onecentroid}. In that graph, the vertex with weight $8$ is the centroid of the tree.

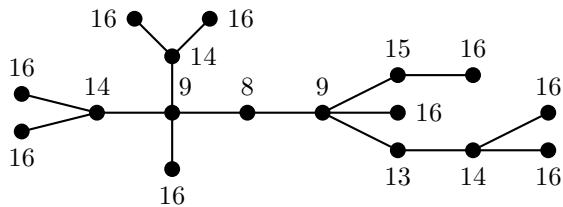
\begin{figure}[ht]
\centering
\begin{tikzpicture}[style=thick]

\draw (3, .25) coordinate (A14) node[below=3pt] {$16$};
\draw (6, .5) coordinate (A15) node[below=3pt] {$13$};
\draw (7, .5) coordinate (A16) node[below=3pt] {$14$};
\draw (8, .5) coordinate (A17) node[below=3pt] {$16$};

\draw (1, .75) coordinate (A7) node[below=3pt] {$16$};
\draw (2, 1) coordinate (A8) node[above=3pt] {$14$};
\draw (3, 1) coordinate (A9) node[above=3pt] {$\ \ \ 9$};

\draw (4, 1) coordinate (A10) node[above=3pt] {$8$};
\draw (5, 1) coordinate (A11) node[above=3pt] {$9$};
\draw (6, 1) coordinate (A12) node[right=3pt] {$16$};
\draw (8, 1) coordinate (A13) node[above=3pt] {$16$};

\draw (1, 1.25) coordinate (A3) node[above=3pt] {$16$};
\draw (3, 1.75) coordinate (A4) node[right=3pt] {$14$};
\draw (6, 1.5) coordinate (A5) node[above=3pt] {$15$};
\draw (7, 1.5) coordinate (A6) node[above=3pt] {$16$};

\draw (2.5, 2.25) coordinate (A1) node[left=3pt] {$16$};
\draw (3.5, 2.25) coordinate (A2) node[right=3pt] {$16$};

\draw(A1) -- (A4);
\draw(A2) -- (A4);

\draw(A3) -- (A8);
\draw(A4) -- (A9);
\draw(A5) -- (A11);
\draw(A6) -- (A5);

\draw(A7) -- (A8);
\draw(A8) -- (A9);
\draw(A9) -- (A10);
\draw(A10) -- (A11);
\draw(A11) -- (A12);
\draw(A9) -- (A14);
\draw(A11) -- (A15);
\draw(A13) -- (A16);
\draw(A15) -- (A16);
\draw(A16) -- (A17);

  \fill (A1) circle (3pt) 
  (A2) circle (3pt)
  (A3) circle (3pt)
  (A4) circle (3pt)
  (A5) circle (3pt)
  (A6) circle (3pt)
  (A7) circle (3pt)
  (A8) circle (3pt)
  (A9) circle (3pt)
  (A10) circle (3pt)
  (A11) circle (3pt)        
  (A12) circle (3pt)
  (A13) circle (3pt)        
  (A14) circle (3pt)
  (A15) circle (3pt)        
  (A16) circle (3pt)
  (A17) circle (3pt);

\end{tikzpicture}
\caption{A tree with one centroid}
\label{fig:onecentroid}
\end{figure}

\begin{proposition}[\cite{bBo}, pp. 99]
Every tree has a centroid consisting of either one vertex or two adjacent vertices.
\end{proposition}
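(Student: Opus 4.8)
The plan is to give the classical walk argument of Jordan. Throughout write $n = \#V(T)$. For an edge $uv$ of $T$, deleting $uv$ splits $T$ into a $u$-side (the component containing $u$, say with $a$ vertices) and a $v$-side (containing $v$, with $b = n-a$ vertices). The first thing I would check is that the largest subtree of $T$ having $v$ as a leaf and using the edge $vu$ is exactly that edge together with the whole $u$-side, so it has $a$ edges; hence $w(v) = \max_u(\text{number of vertices on the } u\text{-side of } uv)$, the maximum taken over neighbours $u$ of $v$, and any subtree witnessing a value at $v$ through some \emph{other} neighbour lies inside the $v$-side and so has at most $b-1$ edges.

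From this description comes the key monotonicity statement: if $uv$ is an edge with $a > b$ (strictly more vertices on the $u$-side), then $w(v) = a$ while $w(u) \le \max(b, a-1) = a-1 < w(v)$, so moving toward the heavier side strictly decreases the weight. I would then let $c$ be a vertex of minimum weight $w_0$: applying the monotonicity at every edge incident to $c$ forces each side-count at $c$ to be at most $n/2$, whence $w_0 \le \lfloor n/2 \rfloor$; and for the neighbour $u$ realising the heaviest branch of $c$ one reads off $w(u) = n - w_0$.

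Next I would locate all the minimisers. Fix such a $c$ and an arbitrary vertex $v$, with path $c = u_0, u_1, \dots, u_k = v$ in $T$. The quantities $n_i = (\text{number of vertices on the } v\text{-side of } u_{i-1}u_i)$ are strictly decreasing in $i$, and $n_1 \le w_0 \le n/2$. For $k \ge 2$ this gives $n_k \le n_1 - (k-1) \le n/2 - 1$, so the branch of $v$ through $u_{k-1}$ has $n - n_k \ge n/2 + 1 > w_0$ edges and hence $w(v) > w_0$. Thus no vertex at distance $\ge 2$ from $c$ is a minimiser, i.e.\ every centroid vertex lies in the closed neighbourhood of $c$. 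Finally, if $c'$ and $c''$ were two further centroid vertices, running this last step from each of $c, c', c''$ would make them pairwise adjacent, creating a triangle in $T$ and contradicting acyclicity; and the centroid is nonempty because $T$ is finite. Hence the centroid is a single vertex or a pair of adjacent vertices.

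I expect the only real work to be the bookkeeping in the first two steps — correctly identifying the maximal "$v$-as-a-leaf" subtree with the edge plus the $u$-side, translating the edge-count definition of weight into vertex-counts of branches, and keeping the inequalities strict (that the $n_i$ strictly decrease, and the $\le b-1$ bound on the competing branches). The conceptual core, monotonicity toward the heavy side together with the absence of triangles, is short.
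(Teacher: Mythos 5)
The paper does not prove this proposition at all---it is listed among the background facts and cited to Bondy and Murty (p.~99)---so there is no in-paper argument to compare against. Your proposal is a correct, self-contained version of the classical Jordan-type argument, and every step checks out under the paper's definition of weight (maximal number of edges in a subtree having $v$ as a leaf): the identification $w(v)=\max_u a_u$ over neighbours $u$, with $a_u$ the order of the component of $T-v$ containing $u$, is right; the monotonicity $w(u)\le\max(b,a-1)=a-1<a=w(v)$ when the $u$-side is strictly heavier is right and immediately gives $w_0\le\lfloor n/2\rfloor$ at a minimiser; the strict decrease of the $n_i$ along a path away from $c$ correctly rules out minimisers at distance at least $2$; and the triangle argument caps the centroid at two adjacent vertices. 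The only things worth writing out in a final version are the one-line verification that the $n_i$ drop by at least $1$ at each step (the component of $T-u_iu_{i+1}$ containing $u_{i+1}$ is contained in the component of $T-u_{i-1}u_i$ containing $u_i$ and omits $u_i$ itself) and a word about the degenerate one-vertex tree, where the weight is an empty maximum. Nothing is missing.
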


\begin{figure}[ht]
\centering
\begin{tikzpicture}[style=thick]

\draw (3, .25) coordinate (A14) node[below=3pt] {$15$};
\draw (5, .5) coordinate (A15) node[below=3pt] {$12$};
\draw (6, .5) coordinate (A16) node[below=3pt] {$13$};
\draw (7, .5) coordinate (A17) node[below=3pt] {$15$};

\draw (1, .75) coordinate (A7) node[below=3pt] {$15$};
\draw (2, 1) coordinate (A8) node[above=3pt] {$13$};
\draw (3, 1) coordinate (A9) node[above=3pt] {$\ \ \ 8$};
\draw (4, 1) coordinate (A11) node[above=3pt] {$8$};
\draw (5, 1) coordinate (A12) node[right=3pt] {$15$};
\draw (7, 1) coordinate (A13) node[above=3pt] {$15$};

\draw (1, 1.25) coordinate (A3) node[above=3pt] {$15$};
\draw (3, 1.75) coordinate (A4) node[right=3pt] {$13$};
\draw (5, 1.5) coordinate (A5) node[above=3pt] {$14$};
\draw (6, 1.5) coordinate (A6) node[above=3pt] {$15$};

\draw (2.5, 2.25) coordinate (A1) node[left=3pt] {$15$};
\draw (3.5, 2.25) coordinate (A2) node[right=3pt] {$15$};

\draw(A1) -- (A4);
\draw(A2) -- (A4);

\draw(A3) -- (A8);
\draw(A4) -- (A9);
\draw(A5) -- (A11);
\draw(A6) -- (A5);

\draw(A7) -- (A8);
\draw(A8) -- (A9);
\draw(A9) -- (A11);
\draw(A11) -- (A12);
\draw(A9) -- (A14);
\draw(A11) -- (A15);
\draw(A13) -- (A16);

\draw(A15) -- (A16);
\draw(A16) -- (A17);

  \fill (A1) circle (3pt) 
  (A2) circle (3pt)
  (A3) circle (3pt)
  (A4) circle (3pt)
  (A5) circle (3pt)
  (A6) circle (3pt)
  (A7) circle (3pt)
  (A8) circle (3pt)
  (A9) circle (3pt)
  (A10) circle (3pt)
  (A11) circle (3pt)        
  (A12) circle (3pt)
  (A13) circle (3pt)        
  (A14) circle (3pt)
  (A15) circle (3pt)        
  (A16) circle (3pt)
  (A17) circle (3pt);

\end{tikzpicture}
\caption{A tree with two centroids}
\label{fig:twocentroids}
\end{figure}
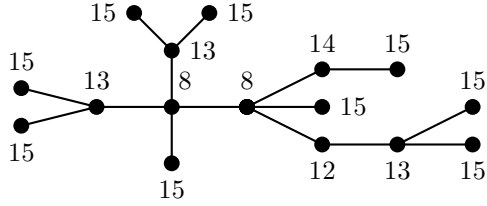

A \emph{proper coloring} of the vertices of a graph $G$ with $k$ colors is a function $\kappa: V(G) \rightarrow \{1, 2, \dots k\}$  such that $\kappa(v) \neq \kappa(w)$ for adjacent vertices $v, w \in V(G)$. Let $\mathbb{N}$ denote the positive integers.  The \emph{chromatic polynomial} of $G$ is a polynomial $\chi_G: \mathbb{N} \rightarrow \mathbb{N} \cup \{0\}$ having the property that $\chi_G(k)$ is the number of proper $k$-colorings of $G$.

For more properties of the chromatic polynomial, including the fact that it is a polynomial, see \cite{bHA}. The graph $G$ in Figure \ref{fig:chromfunct} has chromatic polynomial $\chi_G(k) = k(k - 1)^6$.  One remarkable feature of the chromatic polynomial is that for any fixed $n>0$, all trees with $n$ vertices have the same chromatic polynomial.

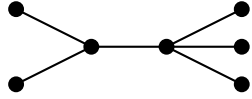
\begin{figure}[ht]
\centering
\begin{tikzpicture}[style=thick]

\draw (2, .5) coordinate (A6);
\draw (2, 1.5) coordinate (A1);
\draw (3, 1) coordinate (A3);
\draw (4, 1) coordinate (A4);
\draw (5, 1) coordinate (A5);
\draw (5, .5) coordinate (A7);
\draw (5, 1.5) coordinate (A2);

\draw(A1) -- (A3);
\draw(A2) -- (A4);
\draw(A3) -- (A4);
\draw(A3) -- (A6);
\draw(A4) -- (A5);
\draw(A4) -- (A7);

  \fill (A1) circle (3pt) 
  (A2) circle (3pt)
  (A3) circle (3pt)
  (A4) circle (3pt)
  (A5) circle (3pt) 
  (A6) circle (3pt)
  (A7) circle (3pt);

\end{tikzpicture}
\caption{A graph $G$ with chromatic polynomial $\chi_G(k) = k(k - 1)^6$}
\label{fig:chromfunct}
\end{figure}

\subsection{The chromatic symmetric function}
This paper will focus on the {\em chromatic symmetric function} of a graph. Before defining this function, we review basic facts of symmetric functions.

A \emph{partition} is a sequence $\lambda=(\lambda_1, \lambda_2, \ldots, \lambda_l)$ of positive integers such that $\lambda_1\geq \lambda_2\geq \cdots \geq \lambda_l$.   The $\lambda_i$ are called the \emph{parts} of $\lambda$.  Furthermore, we say that $\lambda$ is  a partition of $n$, written $\lambda\vdash n$, if $\sum_i \lambda_i= n$. 

Let $x_1, x_2, \ldots$ be a countably infinite set of commuting indeterminates.  For any positive integer $k$, define the \emph{power-sum symmetric function} as 
\[p_k =\sum_{i\geq 1} x_i^k\]
and for a partition $\lambda= ( \lambda_1, \lambda_2, \ldots, \lambda_l)$ we define
\[p_\lambda = p_{\lambda_1}p_{\lambda_2}\cdots p_{\lambda_l}\]
It is a well-known fact that $\{p_\lambda\, |\, \lambda\vdash n\}$ is a basis for the $\mathbb{Q}$-vector space $\Lambda_n$ of all symmetric functions that are homogeneous of degree $n$.  For more details about symmetric functions see \cite[Chap. 7]{bST3}. 

Let $G$ be a simple graph. Stanley \cite{bST}, see also \cite[pp. 462-464]{bST3}, defined the \emph{chromatic symmetric function} of $G$ as 
\begin{equation} \label{stan1}
{\bf X}_G = {\bf X}_G(x_1, x_2, \dots) = \sum_{\kappa}\prod_{v \in V(G)}x_{\kappa(v)}
\end{equation}
where the sum is over all proper colorings $\kappa$ and the $x_1, x_2, \ldots $ are a countably infinite set of commuting  indeterminates.   Since a coloring of a graph is invariant under permutation of the colors, ${\bf X}_G$ is a symmetric, homogeneous function of degree $\# V(G)$.    ${\bf X}_G$ is a generalization of the well-known single variable chromatic polynomial of a graph $\chi_G(k)$.   In fact, Stanley showed that if we set $x_1=x_2=\ldots = x_k=1$ and $x_i=0$ for all $i>k$, then 
\[{\bf X}_G(1,1,\ldots ,1, 0, \ldots ) = \chi_G(k).\]

Often when working with a symmetric function, it is helpful to expand it in terms of one of the many bases for the space of symmetric functions. 
\begin{theorem}{\textrm{[St1]}} 
\begin{equation} \label{stan}
{\bf X}_G = \sum_{S \subseteq E(G)}(-1)^{\#S}p_{\pi(S)}\textrm{,}
\end{equation}
where $\pi(S)$ is the partition whose parts are the orders of the connected components of the subgraphs of $G$ induced by $S$.  $\pi(S)$ is called the type of $S$, (see Figure \ref{fig:type} for an example of $\pi(S)$).
\end{theorem}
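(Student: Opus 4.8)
The plan is to deduce \eqref{stan} from the definition \eqref{stan1} by an inclusion--exclusion argument over the "monochromatic" edges of an arbitrary (not necessarily proper) coloring. First I would drop the properness constraint in \eqref{stan1}: given any coloring $\kappa\colon V(G)\to\mathbb{N}$, set
\[
B(\kappa)=\{\,uv\in E(G):\kappa(u)=\kappa(v)\,\},
\]
the set of edges whose two endpoints receive the same color. Then $\kappa$ is proper exactly when $B(\kappa)=\emptyset$, and the inclusion--exclusion formula on the Boolean lattice of subsets of $B(\kappa)$ gives
\[
[\kappa\text{ is proper}]=\sum_{S\subseteq B(\kappa)}(-1)^{\#S}.
\]

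Substituting this identity into \eqref{stan1} and interchanging the order of summation yields
\[
{\bf X}_G=\sum_{\kappa}\Bigl(\sum_{S\subseteq B(\kappa)}(-1)^{\#S}\Bigr)\prod_{v\in V(G)}x_{\kappa(v)}
=\sum_{S\subseteq E(G)}(-1)^{\#S}\sum_{\kappa:\,S\subseteq B(\kappa)}\ \prod_{v\in V(G)}x_{\kappa(v)}.
\]
Here the condition $S\subseteq B(\kappa)$ says precisely that $\kappa$ takes a constant value on each connected component of the spanning subgraph of $G$ with edge set $S$ (one direction is transitivity of equality along paths in $S$; the converse is immediate). So the remaining task is to evaluate the inner sum. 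If the connected components of that spanning subgraph have vertex sets of sizes $\lambda_1,\dots,\lambda_\ell$ --- that is, $\pi(S)=(\lambda_1,\dots,\lambda_\ell)$, a partition of $\#V(G)$ once isolated vertices are counted as singleton components --- then a coloring that is constant on each component amounts to an independent choice of color for each component, so
\[
\sum_{\kappa:\,S\subseteq B(\kappa)}\ \prod_{v\in V(G)}x_{\kappa(v)}
=\prod_{i=1}^{\ell}\Bigl(\sum_{c\ge 1}x_c^{\lambda_i}\Bigr)
=\prod_{i=1}^{\ell}p_{\lambda_i}=p_{\pi(S)}.
\]
Combining the last two displays gives \eqref{stan}.

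There is no genuine obstacle in this argument; the only points that need care are bookkeeping ones. One should check that the interchange of summations is legitimate: this is fine because, after everything is expanded, the coefficient of any fixed monomial $x_1^{a_1}x_2^{a_2}\cdots$ on each side is a finite alternating sum, so the rearrangement is a coefficient-wise identity of formal power series. One must also be careful that $\pi(S)$ is a partition of $\#V(G)$ and not merely of the number of non-isolated vertices of $(V(G),S)$ --- this is why the subgraph "induced by $S$" must be read as the spanning subgraph $(V(G),S)$, so that each isolated vertex contributes a part equal to $1$ and the factorization of the component sum above comes out correctly. Everything else is the elementary product identity displayed in the last step.
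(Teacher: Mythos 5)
Your argument is correct and complete; it is the standard M\"obius-inversion/inclusion--exclusion proof, which is exactly how Stanley establishes this identity in \cite{bST}. The paper itself states this theorem without proof (attributing it to Stanley), so there is nothing to compare beyond noting that your derivation --- summing $(-1)^{\#S}$ over subsets of the monochromatic edge set and recognizing the inner sum as $p_{\pi(S)}$, with isolated vertices counted as singleton components --- coincides with the original source's proof.
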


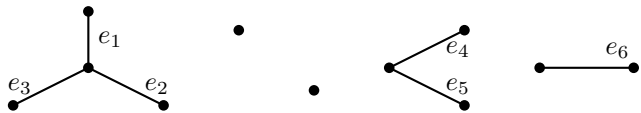
\begin{figure}[ht]
\centering
\begin{tikzpicture}[style=thick]

\draw (2, .5) coordinate (A3);
\draw (3, 1) coordinate (A2);
\draw (4, .5) coordinate (A4);
\draw (3, 1.75) coordinate (A1);
\draw(5, 1.5) coordinate(A5);
\draw(6,.7) coordinate(A6);

\draw (8, 1.5) coordinate (B1);
\draw (7, 1) coordinate (B2);
\draw (9, 1) coordinate (B3);
\draw (10.25, 1) coordinate (B4);
\draw (8, .5) coordinate (B5);

\draw(A1) -- (A2) node [midway] {$\ \ \quad e_1$};
\draw(A2) -- (A4)node [midway]   {$\ \ \ \ \quad e_2$};
\draw(A2) -- (A3)node [midway]   {$e_3$\qquad \ \ };

\draw(B1) -- (B2) node [midway] {$\ \ \ \ \quad e_4$};
\draw(B5) -- (B2)  node [midway] {$\ \ \ \ \quad e_5$};;
\draw(B4) -- (B3)  node [midway,above] {$\ \ \ \ \quad e_6$};;

  \fill (A1) circle (2pt) 
  (A2) circle (2pt)
  (A3) circle (2pt)
  (A4) circle (2pt)
  (A5) circle(2pt)
  (A6) circle (2pt)
  (B1) circle (2pt) 
  (B2) circle (2pt)
  (B3) circle (2pt)
  (B4) circle (2pt)
  (B5) circle (2pt);
\end{tikzpicture}
\caption{$S=\{e_1, e_2, e_3, e_4,e_5,e_6\}$ is an edge set of type $\pi(S)=(4,3,2,1,1)$}
\label{fig:type}
\end{figure}

There are several properties of a graph that we can determine from its chromatic symmetric function. In the following sections, we explore these properties.


\section{Properties preserved by the chromatic symmetric function}
We begin by studying the properties of a simple graph that are determined by its chromatic symmetric function.  We review some known results and make two new contributions. In particular, we show that the number of triangles and the sum of the squares of the vertex degrees can be recovered from ${\bf X}_G$.   The results in this section will be applicable to later sections, when we restrict our attention to unicyclic graphs and trees.

\begin{remark}\label{R1}
If two graphs have the same chromatic symmetric function, then they have
\begin{enumerate}
\item[(1)] the same number of edges, 
\item[(2)] the same number of vertices, and 
\item[(3)] the same number of matchings of $k$ edges (for any natural number $k$).
\end{enumerate}
\end{remark}
From Equation (\ref{stan}) the number of vertices can be recovered from the coefficient of $p_{(1,1,\dots,1)}$, the number of edges from the coefficient of  $p_{(2,1,\dots,1)}$ and the number of $k$-matchings from the coefficient of $p_{(2,2, \dots ,2,1,\dots, 1)}$ (containing $k$ 2's).

Fougere  \cite[Theorem 3.3.1]{bFo} proved that the sum of the squared vertex degrees of a tree was determined by its chromatic symmetric function. This result was strengthened by Martin, Morin, and Wagner when they showed that the degree and path sequences of a tree can be recovered from its chromatic symmetric function \cite[Corollary 5]{bMMA}. In the following proposition, we show that Fougere's result is true even for general graphs. However, the strengthened result in \cite[Corollary 5]{bMMA} is not true for a general graph. An example of two graphs with the same chromatic symmetric function yet differing degree sequences is given in Figure \ref{fig:unic}.
\begin{proposition}\label{degree_squared}
The sum of the squared vertex degrees of a graph $G$
$$\sum_{v \in V(G)}d(v)^2$$
can be obtained from $\textbf{X}_G$.
\end{proposition}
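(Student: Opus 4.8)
The plan is to show that $\sum_{v\in V(G)} d(v)^2$ is a fixed polynomial in two invariants that Remark~\ref{R1} already extracts from $\mathbf{X}_G$: the number of edges $m=\#E(G)$ and the number $M_2$ of $2$-matchings of $G$. Recall from the discussion following Remark~\ref{R1} that, reading off coefficients in the expansion~(\ref{stan}) with $n=\#V(G)$, one has $m = -[p_{(2,1^{n-2})}]\,\mathbf{X}_G$ and $M_2 = [p_{(2,2,1^{n-4})}]\,\mathbf{X}_G$; the point is simply that the only edge-subsets $S\subseteq E(G)$ of type $(2,1^{n-2})$ are single edges, and the only ones of type $(2,2,1^{n-4})$ are pairs of vertex-disjoint edges.

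First I would record the elementary identity $d^2 = d + 2\binom{d}{2}$ and sum it over all vertices, obtaining
\[
\sum_{v\in V(G)} d(v)^2 \;=\; \sum_{v\in V(G)} d(v) \;+\; 2\sum_{v\in V(G)}\binom{d(v)}{2} \;=\; 2m \;+\; 2\sum_{v\in V(G)}\binom{d(v)}{2},
\]
so it is enough to recover $\sum_{v}\binom{d(v)}{2}$ from $\mathbf{X}_G$.

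The key step is a double count of the unordered pairs of distinct edges of $G$. Each such pair is either disjoint or shares exactly one endpoint (it cannot share two, since $G$ is simple). The disjoint pairs are precisely the $2$-matchings, and the pairs meeting at a common vertex $v$ number $\binom{d(v)}{2}$, with distinct choices of $v$ giving distinct pairs. Hence $\binom{m}{2} = \sum_{v}\binom{d(v)}{2} + M_2$, so $\sum_{v}\binom{d(v)}{2} = \binom{m}{2} - M_2$, and substituting back gives
\[
\sum_{v\in V(G)} d(v)^2 \;=\; 2m + 2\left(\binom{m}{2} - M_2\right) \;=\; m^2 + m - 2M_2 ,
\]
which is determined by $\mathbf{X}_G$ because $m$ and $M_2$ are.

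I do not expect a real obstacle here: the only things requiring care are the bookkeeping in the double count (checking that no pair of edges is counted twice in $\sum_{v}\binom{d(v)}{2}$, which is exactly where simplicity of $G$ is used) and verifying that the two power-sum coefficients invoked above genuinely isolate $m$ and $M_2$ rather than mixing in contributions from other edge-subsets of the same type.
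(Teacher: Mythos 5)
Your proof is correct and follows essentially the same route as the paper: both arguments read off $\#E(G)$ and the number of $2$-matchings from the coefficients of $p_{(2,1,\dots,1)}$ and $p_{(2,2,1,\dots,1)}$, and both rest on the double count $\binom{m}{2} = \sum_{v}\binom{d(v)}{2} + M_2$ identifying non-disjoint edge pairs with $\sum_v\binom{d(v)}{2}$. The only cosmetic difference is that you package the conclusion as the explicit formula $\sum_v d(v)^2 = m^2 + m - 2M_2$, whereas the paper phrases it as an equality between two graphs sharing the same chromatic symmetric function.
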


\begin{proof}
Let $S_G^{(2,2)}$ denote the number of spanning subgraphs of $G$ consisting of two disjoint edges and $\#V(G) - 4$ isolated vertices. Since there are no other spanning subgraphs having vertex partition $(2, 2, 1, \dots, 1)$, it follows that $S_G^{(2,2)}$ is exactly the coefficient of $p_{(2, 2, 1, \dots, 1)}$ in $\textbf{X}_G$. Therefore, if $G$ and $H$ have the same chromatic symmetric function, then $S_G^{(2,2)} = S_H^{(2,2)}$. Next, let $S_G^{(3)}$ be the number of spanning subgraphs of $G$ consisting of two non-disjoint edges and $\#V(G) - 3$ isolated vertices. Since all 2-edge spanning subgraphs of $G$ consist of those tallied in $S_G^{(2, 2)}$ or $S_G^{(3)}$, it follows that 

\[
\binomi{\#E(G)}{2} = S_G^{(2, 2)} + S_G^{(3)}\textrm{.}
\]

For graphs $G$ and $H$ having the same chromatic symmetric function $\#E(G) = \#E(H)$ by Remark \ref{R1} and $S_G^{(2, 2)} = S_H^{(2, 2)}$ for the reasons above. Therefore, $S_G^{(3)} = S_H^{(3)}$. We can count $S_G^{(3)}$ by noting that any pair of non-disjoint edges is uniquely determined by its central vertex and a choice of two edges incident to that vertex. Using the fact that $G$ and $H$ contain the same number of edges (from Remark \ref{R1}) and therefore must have the same degree sum, we calculate
\begin{align*}
\sum_{v \in V(G)}\binomi{d(v)}{2} &= \sum_{v \in V(H)}\binomi{d(v)}{2} \\
\sum_{v \in V(G)}(d(v)^2 - d(v)) &= \sum_{v \in V(H)}(d(v)^2 - d(v)) \\
\sum_{v \in V(G)}d(v)^2 &= \sum_{v \in V(H)}d(v)^2 \\
\end{align*}
\end{proof}

\begin{corollary}\label{triangles}
The number of triangles $T_G$ in a graph $G$ can be obtained from $\textbf{X}_G$.
\end{corollary}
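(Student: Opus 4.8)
The plan is to read off the number of triangles from a single coefficient in the power-sum expansion (\ref{stan}) — namely the coefficient of $p_{(3,1,\dots,1)}$. First I would determine which edge subsets $S \subseteq E(G)$ contribute to this coefficient: by definition of $\pi(S)$, these are exactly the $S$ whose induced subgraph has one connected component of order $3$ and all remaining components of order $1$ (isolated vertices). A connected graph on three vertices is either a path with two edges or a triangle with three edges, and there are no other possibilities. So the contributing subsets are precisely the $2$-edge ``cherries'' (subgraphs isomorphic to $P_3$) and the $3$-edge triangles. By (\ref{stan}), the coefficient of $p_{(3,1,\dots,1)}$ in $\mathbf{X}_G$ therefore equals $(-1)^2 N_{P_3} + (-1)^3 T_G = N_{P_3} - T_G$, where $N_{P_3}$ is the number of $P_3$-subgraphs of $G$.

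Next I would observe that $N_{P_3}$ is itself recoverable from $\mathbf{X}_G$. Indeed, a cherry is uniquely determined by its central vertex together with an unordered pair of edges incident to that vertex, so $N_{P_3} = \sum_{v \in V(G)} \binom{d(v)}{2}$; this is exactly the quantity $S_G^{(3)}$ appearing in the proof of Proposition \ref{degree_squared}, which was shown there to be determined by $\mathbf{X}_G$ via the identity $\binom{\#E(G)}{2} = S_G^{(2,2)} + S_G^{(3)}$ together with Remark \ref{R1} and the fact that $S_G^{(2,2)}$ is the coefficient of $p_{(2,2,1,\dots,1)}$. Combining the two steps, $T_G = N_{P_3} - c$, where $c$ is the coefficient of $p_{(3,1,\dots,1)}$ in $\mathbf{X}_G$, and both $N_{P_3}$ and $c$ are determined by $\mathbf{X}_G$; hence so is $T_G$.

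There is no serious obstacle here — the argument is a short bookkeeping computation. The only points requiring care are the (finite) enumeration of connected $3$-vertex subgraphs, getting the signs $(-1)^{\#S}$ right, and citing the portion of the proof of Proposition \ref{degree_squared} that already extracts the cherry count $\sum_v \binom{d(v)}{2}$ from $\mathbf{X}_G$. If a fully self-contained statement is desired, the one subtlety worth spelling out explicitly is why $N_{P_3}$ is recoverable, which is precisely the content recycled from the previous proposition.
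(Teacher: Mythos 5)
Your proposal is correct and follows the paper's proof exactly: both identify the coefficient of $p_{(3,1,\dots,1)}$ as $S_G^{(3)} - T_G$ (your $N_{P_3}$ is precisely the quantity $S_G^{(3)}$ from Proposition \ref{degree_squared}) and then invoke the fact, established in that proposition's proof, that $S_G^{(3)}$ is determined by $\mathbf{X}_G$. Your version merely spells out the sign bookkeeping and the enumeration of connected $3$-vertex subgraphs in more detail than the paper does.
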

\begin{proof}
Note that the coefficient of $p_{(3, 1, 1, \dots, 1)}$ in ${\bf X}_G$ consists of $S_G^{(3)} - T_G$. In the proof of Proposition \ref{degree_squared}, we saw that $S_G^{(3)}$ can be obtained from $\textbf{X}_G$. Therefore, $T_G$ can also be obtained from $\textbf{X}_G$.
\end{proof}

Chow \cite{bCh} proved that the planarity of a graph is not determined by its chromatic symmetric function. Also, in \cite[Proposition 3]{bMMA}, the authors show that the chromatic symmetric function of a graph determines its girth.

\section{Decomposition techniques for graphs}

A fundamental property of chromatic polynomials, $\chi_G(k)$, is the \emph{deletion-contraction} property.  If $G \backslash e$ denotes $G$ with edge $e$ deleted and $G/e$ denotes $G$ with edge $e$ contracted to a point, then 
\[\chi_G(k) = \chi_{G\backslash e} (k) - \chi_{G/e}(k).\]
This property is often used to prove other properties of $\chi_G$ using induction on the number of edges. Unfortunately, there is no analogous deletion-contraction property for ${\bf X}_G$.   In this section we present a novel technique for writing the chromatic symmetric function of a graph  as a linear combination of the chromatic symmetric function of other graphs. In the case that $G$ has girth three we are able to write $\X_G$ as a linear combination of chromatic symmetric functions of graphs with fewer edges. 

\begin{theorem}\label{erase_triangle}
Let $G$ be a graph where $e_1, e_2, e_3 \in E(G)$ form a triangle. Furthermore, define
\begin{itemize}
\item $G_{2, 3} = (V(G), E(G) - \{e_1\})$
\item $G_{1, 3} = (V(G), E(G) - \{e_2\})$
\item $G_{3} = (V(G), E(G) - \{e_1, e_2\})$
\end{itemize}
Then
$${\bf X}_G = {\bf X}_{G_{2, 3}} + {\bf X}_{G_{1, 3}} - {\bf X}_{G_{3}}\textrm{.}$$
\end{theorem}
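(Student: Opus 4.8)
The plan is to work directly from Stanley's expansion in Equation~(\ref{stan}), namely ${\bf X}_G = \sum_{S\subseteq E(G)}(-1)^{\#S}p_{\pi(S)}$, and to compare the four sums term by term according to how each edge subset $S$ interacts with the triangle $\{e_1,e_2,e_3\}$. Note that the three graphs $G_{2,3}$, $G_{1,3}$, and $G_3$ all have the same vertex set as $G$, and their edge sets are $E(G)\setminus\{e_1\}$, $E(G)\setminus\{e_2\}$, and $E(G)\setminus\{e_1,e_2\}$ respectively. So every edge subset appearing in the expansions of ${\bf X}_{G_{2,3}}$, ${\bf X}_{G_{1,3}}$, ${\bf X}_{G_3}$ is also an edge subset of $G$, and conversely a subset $S\subseteq E(G)$ contributes to ${\bf X}_{G_{2,3}}$ iff $e_1\notin S$, to ${\bf X}_{G_{1,3}}$ iff $e_2\notin S$, and to ${\bf X}_{G_3}$ iff $e_1,e_2\notin S$.

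The key step is a clean bookkeeping argument: partition the subsets $S\subseteq E(G)$ into four classes according to whether each of $e_1,e_2$ lies in $S$.
\begin{itemize}
\item If $e_1\notin S$ and $e_2\notin S$: such $S$ appears in all four expansions, contributing $(-1)^{\#S}p_{\pi(S)}$ on the left and $1+1-1=1$ times the same term on the right. These cancel out.
\item If $e_1\in S$ and $e_2\notin S$: such $S$ appears only in the expansions of ${\bf X}_G$ and ${\bf X}_{G_{1,3}}$, with the \emph{same} sign $(-1)^{\#S}$ and \emph{same} type $\pi(S)$ (the type only depends on the induced subgraph on the vertex set $V(G)$, which is identical). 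They cancel.
\item If $e_1\notin S$ and $e_2\in S$: symmetrically, such $S$ appears only in ${\bf X}_G$ and ${\bf X}_{G_{2,3}}$ and cancels.
\item If $e_1\in S$ and $e_2\in S$: such $S$ appears only in the expansion of ${\bf X}_G$ and nowhere on the right-hand side. So the identity reduces to the claim that $\sum_{S\ni e_1,e_2}(-1)^{\#S}p_{\pi(S)}=0$.
\end{itemize}
After the first three classes cancel, everything comes down to showing that the remaining sum over subsets $S$ containing both $e_1$ and $e_2$ vanishes.

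The main obstacle — really the only substantive point — is establishing that last cancellation. Here I would use the triangle structure: write $e_1=uv$, $e_2=vw$, $e_3=uw$, and pair up each subset $S$ containing $\{e_1,e_2\}$ with the subset $S\triangle\{e_3\}$ (toggling membership of the third edge $e_3$). This is an involution on $\{S : e_1,e_2\in S\}$ with no fixed points, and $S$ and $S\triangle\{e_3\}$ have opposite signs $(-1)^{\#S}$. It remains to check they have the same type $\pi(S)=\pi(S\triangle\{e_3\})$: since $e_1,e_2\in S$ already, the vertices $u$, $v$, $w$ all lie in one connected component of the subgraph induced by $S$, and adding or removing the edge $e_3=uw$ between two vertices already in the same component changes neither the partition into components nor their sizes. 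Hence the two terms cancel in pairs, the sum is zero, and the theorem follows. I would present this as the crux of the proof and keep the term-by-term bookkeeping brief.
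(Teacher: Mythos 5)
Your proof is correct and follows essentially the same route as the paper's: both expand via Equation (\ref{stan}), observe that the subsets omitting $e_1$ and/or $e_2$ account exactly for ${\bf X}_{G_{2,3}}+{\bf X}_{G_{1,3}}-{\bf X}_{G_3}$, and then kill the leftover sum over subsets containing both $e_1$ and $e_2$ by toggling $e_3$, which reverses the sign while preserving $\pi(S)$ because $u,v,w$ already lie in one component. The only cosmetic difference is that the paper uses an eight-part partition by membership of all three edges where you use a four-part partition by membership of $e_1,e_2$; the substance is identical.
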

\begin{proof}
Consider the following partition of the set of spanning subgraphs of $G$:
\begin{itemize}
\item $G^1 = \{S \subseteq E(G) : e_1, e_2, e_3 \in S\}$
\item $G^2 = \{S \subseteq E(G) : e_1, e_2 \in S, e_3 \notin S\}$
\item $G^3 = \{S \subseteq E(G) : e_1, e_3 \in S, e_2 \notin S\}$
\item $G^4 = \{S \subseteq E(G) : e_2, e_3 \in S, e_1 \notin S\}$
\item $G^5 = \{S \subseteq E(G) : e_1 \in S, e_2, e_3 \notin S\}$
\item $G^6 = \{S \subseteq E(G) : e_2 \in S, e_1, e_3 \notin S\}$
\item $G^7 = \{S \subseteq E(G) : e_3 \in S, e_1, e_2 \notin S\}$
\item $G^8 = \{S \subseteq E(G) : e_1, e_2, e_3 \notin S\}$.
\end{itemize}

Then by Equation (\ref{stan})
\begin{align*}
{\bf X}_G &= \sum_{S \subseteq E(G)}(-1)^{|S|}p_{\pi(S)}\\
&= \sum_{i = 1}^{8}\sum_{S \in G^i}(-1)^{|S|}p_{\pi(S)}\\
&= \sum_{i \in \{4,6,7,8\}}\sum_{S \in G^i}(-1)^{|S|}p_{\pi(S)} + \sum_{i \in \{3,5,7,8\}}\sum_{S \in G^i}(-1)^{|S|}p_{\pi(S)}\\
& \ \ \ \ - \sum_{i \in \{7,8\}}\sum_{S \in G^i}(-1)^{|S|}p_{\pi(S)} + \sum_{i \in \{1,2\}}\sum_{S \in G^i}(-1)^{|S|}p_{\pi(S)}\\
&= {\bf X}_{G_{2, 3}} + {\bf X}_{G_{1, 3}} - {\bf X}_{G_{3}} + \sum_{i \in \{1,2\}}\sum_{S \in G^i}(-1)^{|S|}p_{\pi(S)}\textrm{.}
\end{align*}
It suffices to prove that the final term is equal to zero. Note, however, that for every subgraph in $G^2$, one can add the edge $e_3$ to get a corresponding subgraph in $G^1$. The addition of $e_3$ will not change the vertex partition of the subgraph in $G^2$, although it will add one edge. Thus, for every subgraph in $G^1$, there is a corresponding subgraph in $G^2$ with an opposite contribution to the chromatic symmetric function. Therefore,
$$\sum_{i \in \{1,2\}}\sum_{S \in G^i}(-1)^{|S|}p_{\pi(S)} = 0$$
and the proof is complete.
\end{proof}

To apply Theorem \ref{erase_triangle}, we introduce the equivalence relation $\sim_{\bf X}$ on linear combinations of graphs. Let $\{G_i\}_{i \leq p}$ and $\{H_i\}_{i \leq k}$ be sets of graphs, and let $\{c_i\}_{i \leq p}$ and $\{d_i\}_{i \leq k}$ be real numbers. We say that
\[
\sum_{i \leq k}c_iG_i \sim_{\bf X} \sum_{i \leq k}d_iH_i
\]
if 
\[
\sum_{i \leq k}c_i{\bf X}_{G_i} = \sum_{i \leq k}d_i{\bf X}_{H_i}\textrm{.}
\]
The following example illustrates how Theorem \ref{erase_triangle} can be applied to two graphs. The vertices of the graphs are labeled to correspond to the labels used in Theorem \ref{erase_triangle}.

\begin{figure}[ht]
\centering
\begin{tikzpicture}[style=thick] 

\draw (.5, 1) coordinate (A1);
\draw (1.5, 1) coordinate (A2);
\draw (2.5, 1) coordinate (A3);
\draw (.5, 2) coordinate (A4);
\draw (1.5, 2) coordinate (A5);
\draw (2.5, 2) coordinate (A6);

\draw (4.1, 1) coordinate (B1);
\draw (5.1, 1) coordinate (B2);
\draw (6.1, 1) coordinate (B3);
\draw (4.1, 2) coordinate (B4);
\draw (5.1, 2) coordinate (B5);
\draw (6.1, 2) coordinate (B6);

\draw (7.5, 1) coordinate (C1);
\draw (8.5, 1) coordinate (C2);
\draw (9.5, 1) coordinate (C3);
\draw (7.5, 2) coordinate (C4);
\draw (8.5, 2) coordinate (C5);
\draw (9.5, 2) coordinate (C6);

\draw (10.8, 1) coordinate (D1);
\draw (11.8, 1) coordinate (D2);
\draw (12.8, 1) coordinate (D3);
\draw (10.8, 2) coordinate (D4);
\draw (11.8, 2) coordinate (D5);
\draw (12.8, 2) coordinate (D6);

\draw(A1) -- (A2) node [midway, below] {$e_2$};
\draw(A3) -- (A2);
\draw(A5) -- (A2) node [midway, right] {$e_3$};
\draw(A5) -- (A1) node [midway, above] {$e_1$};
\draw(A4) -- (A1);
\draw(A5) -- (A6);

\draw(B3) -- (B2);
\draw(B5) -- (B2) node [midway, right] {$e_3$};
\draw(B5) -- (B1) node [midway, above] {$e_1$};
\draw(B4) -- (B1);
\draw(B5) -- (B6);

\draw(C1) -- (C2) node [midway, below] {$e_2$};
\draw(C3) -- (C2);
\draw(C5) -- (C2) node [midway, right] {$e_3$};
\draw(C4) -- (C1);
\draw(C5) -- (C6);

\draw(D3) -- (D2);
\draw(D5) -- (D2) node [midway, right] {$e_3$};
\draw(D4) -- (D1);
\draw(D5) -- (D6);

\draw (2.85, 1.5) coordinate (RA) node[right] { $\sim_{\bf X}$ };
\draw (6.4, 1.5) coordinate (PL) node[right] { \bf{$+$} };
\draw (9.9, 1.5) coordinate (MI) node[right] { \bf{$-$} };

  \fill (A1) circle (3pt) (A2) circle (3pt) (A3) circle (3pt)
    (A4) circle (3pt) (A5) circle (3pt) (A6) circle (3pt)
(B1) circle (3pt) (B2) circle (3pt) (B3) circle (3pt)
    (B4) circle (3pt) (B5) circle (3pt) (B6) circle (3pt)
(C1) circle (3pt) (C2) circle (3pt) (C3) circle (3pt)
    (C4) circle (3pt) (C5) circle (3pt) (C6) circle (3pt)
(D1) circle (3pt) (D2) circle (3pt) (D3) circle (3pt)
    (D4) circle (3pt) (D5) circle (3pt) (D6) circle (3pt);
\end{tikzpicture}
\label{fig:trierase1}
\end{figure}

\begin{figure}[ht]
\centering
\begin{tikzpicture}[style=thick] 

\draw (.1, 1) coordinate (A1);
\draw (1.1, 1) coordinate (A2);
\draw (2.1, 1) coordinate (A3);
\draw (.1, 2) coordinate (A4);
\draw (1.1, 2) coordinate (A5);
\draw (2.1, 2) coordinate (A6);

\draw (3.9, 1) coordinate (B1);
\draw (4.9, 1) coordinate (B2);
\draw (5.9, 1) coordinate (B3);
\draw (3.9, 2) coordinate (B4);
\draw (4.9, 2) coordinate (B5);
\draw (5.9, 2) coordinate (B6);

\draw (7.2, 1) coordinate (C1);
\draw (8.2, 1) coordinate (C2);
\draw (9.2, 1) coordinate (C3);
\draw (7.2, 2) coordinate (C4);
\draw (8.2, 2) coordinate (C5);
\draw (9.2, 2) coordinate (C6);

\draw (10.5, 1) coordinate (D1);
\draw (11.5, 1) coordinate (D2);
\draw (12.5, 1) coordinate (D3);
\draw (10.5, 2) coordinate (D4);
\draw (11.5, 2) coordinate (D5);
\draw (12.5, 2) coordinate (D6);

\draw(A1) -- (A2) node [midway, below] {$e_2$};
\draw(A3) -- (A2);
\draw(A5) -- (A2);
\draw(A4) -- (A2) node [midway, above] {$e_1$};
\draw(A4) -- (A1) node [midway, left] {$e_3$};
\draw(A5) -- (A6);

\draw(B3) -- (B2);
\draw(B5) -- (B2);
\draw(B4) -- (B2) node [midway, above] {$e_1$};
\draw(B4) -- (B1)  node [midway] {$e_3 \ \ \ $};
\draw(B5) -- (B6);

\draw(C1) -- (C2) node [midway, below] {$e_2$};
\draw(C3) -- (C2);
\draw(C5) -- (C2);
\draw(C4) -- (C1)  node [midway, right] {$e_3$};
\draw(C5) -- (C6);

\draw(D3) -- (D2);
\draw(D5) -- (D2);
\draw(D4) -- (D1)  node [midway, right] {$e_3$};
\draw(D5) -- (D6);

\draw (2.5, 1.5) coordinate (RA) node[right] { $\sim_{\bf X}$ };
\draw (6.3, 1.5) coordinate (PL) node[right] { $\bf{+} $};
\draw (9.5, 1.5) coordinate (MI) node[right] {$ \bf{-}$ };

  \fill (A1) circle (3pt) (A2) circle (3pt) (A3) circle (3pt)
    (A4) circle (3pt) (A5) circle (3pt) (A6) circle (3pt)
(B1) circle (3pt) (B2) circle (3pt) (B3) circle (3pt)
    (B4) circle (3pt) (B5) circle (3pt) (B6) circle (3pt)
(C1) circle (3pt) (C2) circle (3pt) (C3) circle (3pt)
    (C4) circle (3pt) (C5) circle (3pt) (C6) circle (3pt)
(D1) circle (3pt) (D2) circle (3pt) (D3) circle (3pt)
    (D4) circle (3pt) (D5) circle (3pt) (D6) circle (3pt);
\end{tikzpicture}
\caption{Decomposition of Two Graphs}
\label{fig:trierase2}
\end{figure}
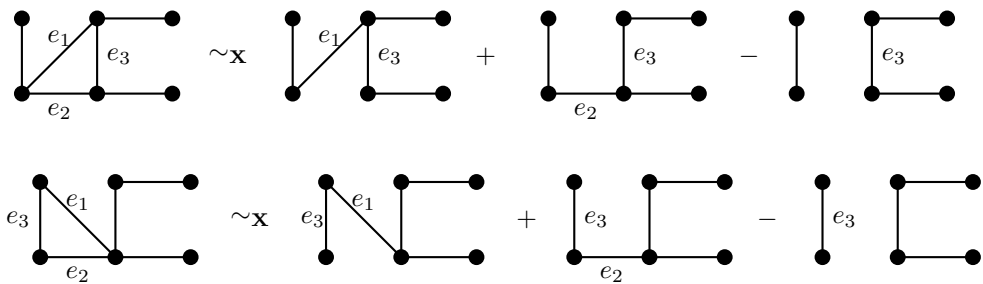
Notice that the forests in the top expression are isomorphic to the forests in the bottom expression. Therefore, the unicyclic graph on the top left in Figure \ref{fig:trierase2} has the same chromatic symmetric function as the unicyclic graph on the bottom left of Figure \ref{fig:trierase2}. This example shows that the degree sequence of a graph is not determined by its chromatic symmetric function.

However, this method has limitations. At this point in our discussion, we cannot apply it to graphs having no triangles. Also, even if we do manage to decompose the chromatic symmetric function of two graphs into linear combinations of chromatic symmetric functions of forests, we cannot always determine whether these linear combinations are equal. The following corollary introduces another way to decompose chromatic symmetric functions that will complement Theorem \ref{erase_triangle} to strengthen the method.

Notice that there are six different ways to label the three sides of a triangle $e_1, e_2,$ and $e_3$. By choosing different permutations of labels, Theorem \ref{erase_triangle} yields three different ways to decompose a graph with a triangle into smaller graphs. We use this fact in the following corollary.

\begin{corollary} \label{line_erase}
Let $G$ be a graph with the adjacent edges $e_1 = vv_1$, $e_2 = vv_2$ and $e_3 = v_1v_2 \notin E(G)$ (that is, $e_1$ and $e_2$ meet at the vertex $v$, but there is no edge connecting $v_1$ to $v_2$). Define
\begin{itemize}
\item $G_{1, 3} = (V(G), (E(G) - \{e_2\}) \cup \{e_3\})$
\item $G_{2, 3} = (V(G), (E(G) - \{e_1\}) \cup \{e_3\})$
\item $G_{1} = (V(G), E(G) - \{e_2\})$
\item $G_{3} = (V(G), (E(G) - \{e_1, e_2\}) \cup \{e_3\})\textrm{.}$
\end{itemize}
Then 
$${\bf X}_G = {\bf X}_{G_{2, 3}} + {\bf X}_{G_{1}} - {\bf X}_{G_3}\textrm{.}$$
\end{corollary}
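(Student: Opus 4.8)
The plan is to deduce this from Theorem~\ref{erase_triangle} by first completing the cherry $e_1,e_2$ into a triangle. Form the graph $G^{+} = (V(G), E(G) \cup \{e_3\})$. Since $e_1 = vv_1$ and $e_2 = vv_2$ are distinct edges meeting at $v$, the vertices $v, v_1, v_2$ are pairwise distinct, and since $e_3 = v_1v_2 \notin E(G)$ the graph $G^{+}$ is simple; moreover $e_1, e_2, e_3$ form a triangle in $G^{+}$. Hence Theorem~\ref{erase_triangle} applies to $G^{+}$.

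Next I would invoke Theorem~\ref{erase_triangle} for $G^{+}$ using two different labelings of the sides of this triangle (as suggested by the remark preceding the corollary). Taking $e_3$ to be the edge that is never deleted — the labeling exactly as stated in Theorem~\ref{erase_triangle} — gives
\[
{\bf X}_{G^{+}} = {\bf X}_{G^{+} - e_1} + {\bf X}_{G^{+} - e_2} - {\bf X}_{G^{+} - e_1 - e_2}.
\]
Unwinding the edge sets, $G^{+} - e_1 = (V(G), (E(G) - \{e_1\}) \cup \{e_3\}) = G_{2,3}$, $\ G^{+} - e_2 = (V(G), (E(G) - \{e_2\}) \cup \{e_3\}) = G_{1,3}$, and $G^{+} - e_1 - e_2 = (V(G), (E(G) - \{e_1,e_2\}) \cup \{e_3\}) = G_3$. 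Taking instead $e_1$ to be the edge that is never deleted gives
\[
{\bf X}_{G^{+}} = {\bf X}_{G^{+} - e_2} + {\bf X}_{G^{+} - e_3} - {\bf X}_{G^{+} - e_2 - e_3},
\]
and here $G^{+} - e_2 = G_{1,3}$, $\ G^{+} - e_3 = (V(G), E(G)) = G$, and $G^{+} - e_2 - e_3 = (V(G), E(G) - \{e_2\}) = G_1$.

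Finally, equating these two expressions for ${\bf X}_{G^{+}}$ and cancelling the common term ${\bf X}_{G_{1,3}}$ leaves ${\bf X}_G - {\bf X}_{G_1} = {\bf X}_{G_{2,3}} - {\bf X}_{G_3}$, which is the asserted identity. I do not expect a genuine obstacle: all the substance sits in Theorem~\ref{erase_triangle}, and this corollary follows in a couple of lines once the right auxiliary graph $G^{+}$ is introduced. The only points needing care are verifying that $G^{+}$ is genuinely simple (so that Theorem~\ref{erase_triangle} is applicable) and correctly translating the three edge-deletions on $G^{+}$ into the four auxiliary graphs $G_{2,3}, G_{1,3}, G_1, G_3$ named in the statement — routine but mildly error-prone bookkeeping.
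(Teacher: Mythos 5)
Your proposal is correct and is essentially identical to the paper's own proof: both introduce the auxiliary graph $G' = (V(G), E(G) \cup \{e_3\})$, apply Theorem~\ref{erase_triangle} with two labelings of the triangle to get ${\bf X}_{G'} = {\bf X}_{G_{2,3}} + {\bf X}_{G_{1,3}} - {\bf X}_{G_3}$ and ${\bf X}_{G'} = {\bf X}_{G_{1,3}} + {\bf X}_G - {\bf X}_{G_1}$, and equate. Your bookkeeping of the four auxiliary graphs checks out.
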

\begin{proof}
Let $G^\prime = (V(G), E(G) \cup \{e_3\})$. We can apply Theorem \ref{erase_triangle} in two different ways:
$${\bf X}_{G^\prime} = {\bf X}_{G_{2, 3}} + {\bf X}_{G_{1, 3}} - {\bf X}_{G_{3}}$$
$${\bf X}_{G^\prime} = {\bf X}_{G_{1, 3}} + {\bf X}_G - {\bf X}_{G_{1}}\textrm{.}$$
This gives the desired equality.
\end{proof}

Figure \ref{fig:rule2} illustrates Corollary \ref{line_erase}, and Figure \ref{fig:twist} shows an application of it.

\begin{figure}[ht]
\centering
\begin{tikzpicture}[style=thick] 

\draw (1, 1) coordinate (A2);
\draw (2, 1) coordinate (A3);
\draw (1.5, 2) coordinate (A1) node[right] {$v$};

\draw (3.5, 1) coordinate (B2);
\draw (4.5, 1) coordinate (B3);
\draw (4, 2) coordinate (B1);

\draw (5.5, 1) coordinate (C2);
\draw (6.5, 1) coordinate (C3);
\draw (6, 2) coordinate (C1);

\draw (7.5, 1) coordinate (D2);
\draw (8.5, 1) coordinate (D3);
\draw (8, 2) coordinate (D1);

\draw (9.5, 1) coordinate (E2);
\draw (10.5, 1) coordinate (E3);
\draw (10, 2) coordinate (E1);

\draw(A1) -- (A2) node [midway] {$e_1 \ \ \ \ $};
\draw(A3) -- (A1) node [midway] {$\ \ \ e_2$};

\draw(B1) -- (B3);
\draw(B3) -- (B2);

\draw(C2) -- (C1);

\draw(D3) -- (D2);

\draw (2.2, 1.5) coordinate (RA) node[right] { $\sim_{\bf X}$ };
\draw (4.7, 1.5) coordinate (PL) node[right] { \bf{+} };
\draw (6.7, 1.5) coordinate (MI) node[right] { \bf{-} };

  \fill (A1) circle (3pt) (A2) circle (3pt) (A3) circle (3pt)
(B1) circle (3pt) (B2) circle (3pt) (B3) circle (3pt)
(C1) circle (3pt) (C2) circle (3pt) (C3) circle (3pt)
(D1) circle (3pt) (D2) circle (3pt) (D3) circle (3pt);

\end{tikzpicture}
\caption{Illustration of Corollary \ref{line_erase}}
\label{fig:rule2}
\end{figure}

\begin{figure}[ht]
\centering
\begin{tikzpicture}[style=thick] 

\draw (1, 1) coordinate (A1);
\draw (2, 1) coordinate (A2);
\draw (2.309, 1.951) coordinate (A3);
\draw (.691, 1.951) coordinate (A4);
\draw (1.5, 2.64) coordinate (A5);

\draw (4, 1) coordinate (B1);
\draw (5, 1) coordinate (B2);
\draw (5.309, 1.951) coordinate (B3);
\draw (3.691, 1.951) coordinate (B4);
\draw (4.5, 2.64) coordinate (B5);

\draw (7, 1) coordinate (C1);
\draw (8, 1) coordinate (C2);
\draw (8.309, 1.951) coordinate (C3);
\draw (6.691, 1.951) coordinate (C4);
\draw (7.5, 2.64) coordinate (C5);

\draw (10, 1) coordinate (D1);
\draw (11, 1) coordinate (D2);
\draw (11.309, 1.951) coordinate (D3);
\draw (9.691, 1.951) coordinate (D4);
\draw (10.5, 2.64) coordinate (D5);

\draw(A1) -- (A2);
\draw(A3) -- (A2);
\draw(A5) -- (A3) node [midway, above] {$e_2$};
\draw(A4) -- (A5) node [midway, above] {$e_1$};
\draw(A4) -- (A1);

\draw(B1) -- (B2);
\draw(B1) -- (B4);
\draw(B2) -- (B3);
\draw(B4) -- (B3)  node [midway, below] {$e_3$};
\draw(B5) -- (B3)  node [midway, above] {$e_2$};

\draw(C1) -- (C2);
\draw(C3) -- (C2);
\draw(C1) -- (C4);
\draw(C4) -- (C5)  node [midway, above] {$e_1$};

\draw(D1) -- (D2);
\draw(D3) -- (D2);
\draw(D1) -- (D4);
\draw(D4) -- (D3)  node [midway, below] {$e_3$};

\draw (2.7, 1.5) coordinate (RA) node[right] { $\sim_{\bf X}$ };
\draw (5.9, 1.5) coordinate (PL) node[right] { \bf{+} };
\draw (8.9, 1.5) coordinate (MI) node[right] { \bf{-} };

  \fill (A1) circle (3pt) (A2) circle (3pt) (A3) circle (3pt)
    (A4) circle (3pt) (A5) circle (3pt) 
(B1) circle (3pt) (B2) circle (3pt) (B3) circle (3pt)
    (B4) circle (3pt) (B5) circle (3pt) 
(C1) circle (3pt) (C2) circle (3pt) (C3) circle (3pt)
    (C4) circle (3pt) (C5) circle (3pt) 
(D1) circle (3pt) (D2) circle (3pt) (D3) circle (3pt)
    (D4) circle (3pt) (D5) circle (3pt) ;
\end{tikzpicture}
\caption{Application of Corollary \ref{line_erase}}
\label{fig:twist}
\end{figure}
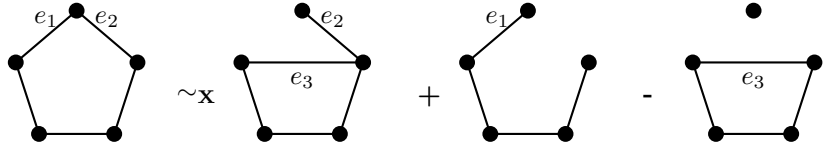
There is one more corollary to Theorem \ref{erase_triangle} that will be useful.

\begin{corollary} \label{wedge_erase}
Let $G$ be a graph contaning a triangle formed by the edges $e_1 = vv_1$, $e_2 = vv_2$ and $e_3 = v_1v_2$. Define
\begin{itemize}
\item $G_{1, 2} = (V(G), E(G) - \{e_3\})$
\item $G_{1} = (V(G), E(G) - \{e_2, e_3\})$
\item $G_{2} = (V(G), E(G) - \{e_1, e_3\})$
\item $G_{3} = (V(G), E(G) - \{e_1, e_2\})\textrm{.}$
\end{itemize}
Then 
$${\bf X}_G = 2{\bf X}_{G_{1, 2}} + {\bf X}_{G_{3}} - {\bf X}_{G_1} - {\bf X}_{G_2}\textrm{.}$$
\end{corollary}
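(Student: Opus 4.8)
The plan is to deduce this from Theorem~\ref{erase_triangle} alone, by applying it three times — once for each choice of which edge of the triangle $e_1,e_2,e_3$ is singled out in the role of $e_3$ in that theorem — and then forming a suitable linear combination of the three resulting identities. This is the same strategy as in Corollary~\ref{line_erase}, except that here no auxiliary graph with an added edge is needed, since all three triangle edges already belong to $G$.

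First I would set up notation for the six graphs obtained by deleting one or two edges of the triangle: write $H_i=(V(G),E(G)-\{e_i\})$ for $i\in\{1,2,3\}$ and $H_{ij}=(V(G),E(G)-\{e_i,e_j\})$ for $i\neq j$. In these terms the graphs named in the statement are $G_{1,2}=H_3$, $G_1=H_{23}$, $G_2=H_{13}$, and $G_3=H_{12}$, so the target identity becomes ${\bf X}_G=2{\bf X}_{H_3}+{\bf X}_{H_{12}}-{\bf X}_{H_{23}}-{\bf X}_{H_{13}}$.

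Next, Theorem~\ref{erase_triangle} (whose statement is symmetric in its first two edge labels and distinguishes the third) applied to the labelings $(e_1,e_2,e_3)$, $(e_2,e_3,e_1)$, and $(e_1,e_3,e_2)$ of the triangle yields respectively
\begin{align*}
{\bf X}_G &= {\bf X}_{H_1}+{\bf X}_{H_2}-{\bf X}_{H_{12}},\\
{\bf X}_G &= {\bf X}_{H_2}+{\bf X}_{H_3}-{\bf X}_{H_{23}},\\
{\bf X}_G &= {\bf X}_{H_1}+{\bf X}_{H_3}-{\bf X}_{H_{13}}.
\end{align*}
Adding the second and third identities and subtracting the first cancels ${\bf X}_{H_1}$ and ${\bf X}_{H_2}$ on the right and collapses $2{\bf X}_G-{\bf X}_G$ to ${\bf X}_G$ on the left, leaving ${\bf X}_G=2{\bf X}_{H_3}+{\bf X}_{H_{12}}-{\bf X}_{H_{23}}-{\bf X}_{H_{13}}$, which is exactly the desired formula once rewritten in the $G_\bullet$ notation.

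The only point that needs attention — and the nearest thing to an obstacle, though it is purely bookkeeping — is verifying that each of the three relabelings is a genuine instance of Theorem~\ref{erase_triangle} (it is, since $e_1,e_2,e_3$ still form a triangle under any relabeling) and that the three graphs $G_{2,3},G_{1,3},G_3$ produced by the theorem in each instance are matched correctly with the $H_i$ and $H_{ij}$ above. No combinatorial or analytic input beyond Theorem~\ref{erase_triangle} is required.
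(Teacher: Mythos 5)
Your proof is correct and is essentially the paper's own argument: the paper combines one instance of Theorem~\ref{erase_triangle} with one instance of Corollary~\ref{line_erase} (which is itself just two further instances of Theorem~\ref{erase_triangle} applied to $G$), and unwinding that substitution yields exactly your linear combination of the three labelings of the triangle. Your direct three-instance version is, if anything, a bit cleaner, and your matching of $H_3, H_{12}, H_{13}, H_{23}$ with $G_{1,2}, G_3, G_2, G_1$ checks out.
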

\begin{proof}
If we define
$$G_{2, 3} = (V(G), E(G) - \{e_1\})\textrm{,}$$
then we can apply Theorem \ref{erase_triangle} and Corollary \ref{line_erase} to obtain the following equations:
$${\bf X}_{G} = {\bf X}_{G_{1, 2}} + {\bf X}_{G_{2, 3}} - {\bf X}_{G_{2}}$$
$${\bf X}_{G_{2, 3}} = {\bf X}_{G_{1, 2}} + {\bf X}_{G_3} - {\bf X}_{G_{1}}\textrm{.}$$
The desired equality follows from substitution of the second equation into the first.
\end{proof}

Corollary \ref{wedge_erase} is illustrated in Figure \ref{fig:rule1}.

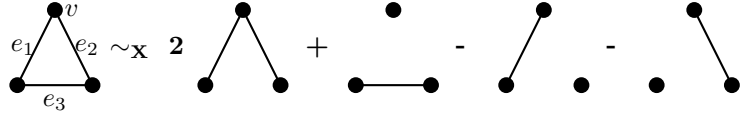
\begin{figure}[ht]
\centering
\begin{tikzpicture}[style=thick]

\draw (1, 1) coordinate (A2);
\draw (2, 1) coordinate (A3);
\draw (1.5, 2) coordinate (A1) node[right] {$v$};

\draw (3.5, 1) coordinate (B2);
\draw (4.5, 1) coordinate (B3);
\draw (4, 2) coordinate (B1);

\draw (5.5, 1) coordinate (C2);
\draw (6.5, 1) coordinate (C3);
\draw (6, 2) coordinate (C1);

\draw (7.5, 1) coordinate (D2);
\draw (8.5, 1) coordinate (D3);
\draw (8, 2) coordinate (D1);

\draw (9.5, 1) coordinate (E2);
\draw (10.5, 1) coordinate (E3);
\draw (10, 2) coordinate (E1);

\draw(A1) -- (A2) node [midway] {$e_1 \ \ \ $};
\draw(A3) -- (A1) node [midway] {$\ \ \ e_2$};
\draw(A3) -- (A2) node [midway, below] {$e_3$};

\draw(B3) -- (B1);
\draw(B1) -- (B2);

\draw(C3) -- (C2);

\draw(D1) -- (D2);

\draw(E3) -- (E1);

\draw (2.1, 1.5) coordinate (RA) node[right] { $\sim_{\bf X}$ \bf{ 2} };
\draw (4.7, 1.5) coordinate (PL) node[right] { \bf{+} };
\draw (6.7, 1.5) coordinate (MI) node[right] { \bf{-} };
\draw (8.7, 1.5) coordinate (MI2) node[right] { \bf{-} };

  \fill (A1) circle (3pt) (A2) circle (3pt) (A3) circle (3pt)
(B1) circle (3pt) (B2) circle (3pt) (B3) circle (3pt)
(C1) circle (3pt) (C2) circle (3pt) (C3) circle (3pt)
(D1) circle (3pt) (D2) circle (3pt) (D3) circle (3pt)
(E1) circle (3pt) (E2) circle (3pt) (E3) circle (3pt);
\end{tikzpicture}
\caption{Illustration of Corollary \ref{wedge_erase}}
\label{fig:rule1}
\end{figure}


\section{The chromatic symmetric function of a unicyclic graph}

In this section, we study chromatic symmetric functions of unicyclic graphs. In \cite{bMMA}, the authors describe two families of unicyclic graphs, {\em squids} and {\em crabs}. A {\em squid} is a connected unicyclic graph having only one vertex of degree greater than 2, while a {\em crab} is a connected unicyclic graph in which every vertex not lying on the cycle has degree 1. In \cite[Theorem 12]{bMMA}, they proved that no two non-isomorphic squids have the same chromatic symmetric function.  In \cite[Proposition 13]{bMMA} they show a similar result for crabs subject to an additional technical condition. 
In addition, Martin, Morin and Wagner asked whether two distinct unicyclic graphs could have the same chromatic symmetric function. Figure \ref{fig:unic} shows an example of two such graphs.

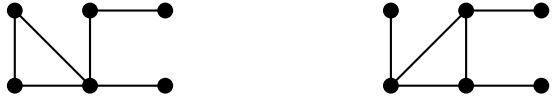
\begin{figure}[ht]
\centering
\begin{tikzpicture}[style=thick] 

\draw (2, .5) coordinate (C3) node[below=3pt] {};
\draw (3, .5) coordinate (C4) node[below=3pt] {};
\draw (2, 1.5) coordinate (C1) node[above=3pt] {};
\draw (3, 1.5) coordinate (C2) node[above=3pt] {};
\draw (4, .5) coordinate (D4) node {};
\draw (4, 1.5) coordinate (D2) node {};

\draw (7, .5) coordinate  (E3) node[below=3pt] {};
\draw (8, .5) coordinate (E4) node[below=3pt] {};
\draw (7, 1.5) coordinate  (E1) node[above=3pt] {};
\draw (8, 1.5) coordinate (E2) node[above=3pt] {};
\draw (9, .5) coordinate (U4) node {};
\draw (9, 1.5) coordinate (U2) node {};

\draw(C1) -- (C3);
\draw(C2) -- (C4);
\draw(C4) -- (C3);
\draw(C1) -- (C4);
\draw(C2) -- (D2);
\draw(C4) -- (D4);

\draw(E1) -- (E3);
\draw(E2) -- (E3);
\draw(E2) -- (E4);
\draw(E4) -- (E3);
\draw(E2) -- (U2);
\draw(E4) -- (U4);

\fill (C1) circle (3pt) (C2) circle (3pt) (C3) circle (3pt)
    (C4) circle (3pt) (D2) circle (3pt)
    (D4) circle (3pt);

\fill (E1) circle (3pt) (E2) circle (3pt) (E3) circle (3pt)
    (E4) circle (3pt) (U2) circle (3pt)
    (U4) circle (3pt);

\end{tikzpicture}
\caption{Two unicyclic graphs with the same chromatic symmetric function}
\label{fig:unic}
\end{figure}

These graphs are precisely the same ones we used to illustrate Theorem \ref{erase_triangle} in Section 3. It is also interesting that these graphs do {\em not} have the same degree sequence. It was proven in \cite{bMMA} that any two trees with the same chromatic symmetric function share the same degree sequence; here we see that an analogous result does not apply for unicyclic graphs. In fact, not even the number of leaves is determined from the chromatic symmetric function of a unicyclic graph. 

In the following proposition we provide a relation between the number of vertices of degree one and two within the cycle in two unicyclic graphs with the same chromatic symmetric function. 

\begin{proposition}
If graphs $G$ and $H$ are connected unicyclic graphs with a cycle of length $p$ and ${\bf X}_G = {\bf X}_H$, then
$$(p-1)L_G + I_G = (p-1)L_H + I_H$$
where $L_G$ and $L_H$ are the number of leaves in $G$ and $H$ respectively, and $I_G$ and $I_H$ are the number of vertices of degree two on the cycles of $G$ and $H$ respectively.
\end{proposition}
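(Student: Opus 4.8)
The plan is to extract from $\mathbf{X}_G$ a second piece of combinatorial data, beyond the $S_G^{(2,2)}$, $S_G^{(3)}$, and $T_G$ already recovered in Section 2, that for a connected unicyclic graph with cycle length $p$ evaluates to an explicit linear expression in $L_G$ and $I_G$. The natural candidate is the coefficient of a power-sum $p_\lambda$ indexed by a partition that forces the relevant subgraphs to "see" the cycle. Concretely, I would look at the coefficient of $p_{(p,1,\dots,1)}$ in $\mathbf{X}_G$. By Equation~(\ref{stan}), this coefficient is $\sum_{S}(-1)^{\#S}$ over all edge subsets $S$ whose induced subgraph has one connected component of order $p$ and all other vertices isolated; in a unicyclic graph such an $S$ must either be the unique cycle itself (contributing $(-1)^p$) or a tree on $p$ vertices (a path or other subtree), contributing $(-1)^{p-1}$ times its count. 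So the coefficient is $(-1)^{p-1}\big(N_G^{(p)} - 1\big)$, where $N_G^{(p)}$ counts subtrees of $G$ on exactly $p$ vertices that arise as connected subgraphs spanning $p$ vertices with all other vertices isolated — equivalently, connected subgraphs of $G$ with $p$ vertices and $p-1$ edges.

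The key step is then to count $N_G^{(p)}$ in terms of the structure of a connected unicyclic graph. Write $C$ for the cycle and think of $G$ as $C$ together with trees hanging off the cycle vertices. A connected $p$-vertex subtree of $G$ is obtained by deleting exactly one edge from some connected $p$-vertex subgraph; I would instead count directly. Actually the cleanest route: count \emph{connected subgraphs with $p$ vertices} (these are either the cycle $C$, or a tree $T'$ on $p$ vertices), and separately count the edges. A connected subgraph on $p$ vertices and $p$ edges is exactly $C$ (there is only one, since $G$ is unicyclic), and a connected subgraph on $p$ vertices and $p-1$ edges is a subtree. So $N_G^{(p)}$ is the number of $p$-vertex subtrees of $G$. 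I would argue that every such subtree, having $p-1$ edges but missing at least one cycle edge, is determined by choosing which "arc" of the cycle plus pendant structure it uses; after a short case analysis this count turns out to be $p - 1$ plus a correction that counts, for each cycle vertex $v$, the ways its pendant trees let the subtree avoid using one of the two incident cycle edges. Pushing this through, $N_G^{(p)}$ should come out to $(p-1) + (\text{number of cycle vertices that are "branching" or support pendant structure})$, and after translating "cycle vertices of degree exactly two" ($I_G$ of them) versus the others ($p - I_G$ of them, each attached to at least one leaf-bearing branch), plus bringing in $L_G$ through the pendant edges, one gets $N_G^{(p)} = (p-1)L_G + I_G + c_p$ for a constant $c_p$ depending only on $p$. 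Since $\mathbf{X}_G = \mathbf{X}_H$ forces the $p_{(p,1,\dots,1)}$-coefficients equal and $p$ is the common girth (recoverable from $\mathbf{X}_G$ by \cite[Proposition 3]{bMMA}), we conclude $(p-1)L_G + I_G = (p-1)L_H + I_H$.

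The main obstacle is the combinatorial identity $N_G^{(p)} = (p-1)L_G + I_G + c_p$: a naive count of $p$-vertex subtrees of a unicyclic graph is not obviously linear in $L_G$ and $I_G$, because pendant trees of depth $\ge 2$ could contribute complicated terms. I expect that the coefficient $p_{(p,1,\dots,1)}$ is \emph{not} quite the right invariant and that one must instead use the coefficient of $p_{(p,2,1,\dots,1)}$ or combine $p_{(p,1,\dots,1)}$ with a modification — the point being to pick a partition delicate enough that only "small" subtrees near the cycle contribute, killing the depth-$\ge 2$ complications. An alternative, possibly cleaner, approach is to avoid a single coefficient and instead apply the decomposition results of Section~3: use Theorem~\ref{erase_triangle} or its corollaries (iterated around the cycle, or after first reducing to the triangle case) to express $\mathbf{X}_G$ in terms of chromatic symmetric functions of forests, track how $L_G$ and $I_G$ enter the forest data, and compare. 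In either case the heart of the argument is an exact count; once that count is shown to be an affine function of $L_G$ and $I_G$ with the stated coefficients, the proposition follows immediately from equality of the corresponding coefficient in $\mathbf{X}_G$ and $\mathbf{X}_H$.
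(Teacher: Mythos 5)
Your proposed invariant is the wrong one, and the gap you flag at the end is fatal rather than technical. The identity you need, $N_G^{(p)} = (p-1)L_G + I_G + c_p$ where $N_G^{(p)}$ is the number of $p$-vertex subtrees, is simply false: take $G$ to be a $p$-cycle with a single pendant path of length $m$ attached. Then $L_G = 1$ and $I_G = p-1$ independently of $m$, but the pendant path alone already contains $m-p+2$ subpaths on $p$ vertices, so $N_G^{(p)}$ grows without bound while the right-hand side stays fixed. No choice of $c_p$ repairs this, and the same depth-$\ge 2$ pendant structure defeats the variants you mention (e.g.\ $p_{(p,2,1,\dots,1)}$). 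So the coefficient of $p_{(p,1,\dots,1)}$ does not isolate the quantity $(p-1)L_G+I_G$, and the proposal as written does not prove the proposition.

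The coefficient that works --- and the one the paper uses --- is that of $p_{(n-1,1)}$, where $n=\#V(G)$. The point is to exploit the fact that a connected unicyclic graph has exactly $n$ edges, so a spanning subgraph of type $(n-1,1)$ must omit only one or two edges of $G$ (omitting three forces at least three components). Omitting one edge yields type $(n-1,1)$ exactly when that edge is a pendant edge ($L_G$ subsets, each with $n-1$ edges); omitting two edges yields it exactly when one is a pendant edge and the other a cycle edge ($pL_G$ subsets) or both are the cycle edges at a degree-two cycle vertex ($I_G$ subsets), each with $n-2$ edges. The coefficient is therefore $(-1)^{n-1}L_G + (-1)^{n-2}(pL_G + I_G)$, and multiplying by $(-1)^n$ gives $(p-1)L_G + I_G$ directly. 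Your instinct to ``pick a partition delicate enough that only small subtrees near the cycle contribute'' is the right one, but the delicacy comes from making the large part of the partition nearly spanning, not from making it equal to the girth.
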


\begin{proof}
Let $n$ be the number of vertices in $G$ and $H$. Because $G$ and $H$ are unicyclic, the removal of any three edges of $G$ or $H$ will be a subgraph with at least three connected components. Therefore, every subgraph of $G$ or $H$ of type $(n-1,1)$ is a subgraph with either $n-1$ or $n-2$ edges. The only way that the removal of one edge would result in a type $(n-1,1)$ subgraph is if the removed edge is adjacent to a leaf. There are two ways that the removal of two edges could result in a type $(n-1,1)$ subgraph: either a edge adjacent to a leaf and a cycle edge are both removed, or two edges attached to a vertex of degree two on the cycle are removed. Thus, equating the coefficient of $p_{(n-1, 1)}$ in ${\bf X}_G$ and ${\bf X}_H$ yields
\[
(-1)^{n-1}L_G + (-1)^{n-2}pL_G + (-1)^{n-2}I_G = (-1)^{n-1}L_H + (-1)^{n-2}pL_H + (-1)^{n-2}I_H.
\]
Multiplying the above equation by $(-1)^{n}$ gives the desired result.
\end{proof}

So far, we have just one example of a pair of unicyclic graphs (those shown in Figure \ref{fig:unic}) that have the same chromatic symmetric function. Next, we will prove a theorem that gives a sufficient condition for two graphs to have the same chromatic symmetric function. It will aid us in constructing infinitely many pairs of unicyclic graphs with the same chromatic symmetric function. 

\begin{theorem}\label{P1}
Let $G = (V(G), E(G))$ be a graph that has four vertices $u$, $v$, $w$, $z$ with the property that $uz, wz, vw \in E(G)$ and $uw, vz, uv \notin E(G)$. If there exists a graph automorphism $\varphi: (V(G), E(G) - wz) \rightarrow (V(G), E(G) - wz)$ such that
$$\varphi(\{u, w\}) = \{v, z\} \ \ {\textrm{and}} \ \ \varphi(\{v, z\}) = \{u, w\}$$
then 
\[
H = (V(G), E(G) \cup \{uw\}) \textrm{ and } J = (V(G), E(G) \cup \{vz\})
\]
have the same chromatic symmetric function.
\end{theorem}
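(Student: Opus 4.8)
The plan is to reduce the statement to two applications of Theorem~\ref{erase_triangle}. First observe where the triangles are: $H=(V(G),E(G)\cup\{uw\})$ contains the triangle on the vertices $u,w,z$, since its three edges $uw,uz,wz$ are all present ($uz,wz\in E(G)$, and $uw$ was just added); symmetrically, $J=(V(G),E(G)\cup\{vz\})$ contains the triangle on $v,w,z$, with edges $vz,vw,wz$. The idea is to erase each of these two triangles in the way that collapses the two expansions onto a common part plus a part governed by $\varphi$.

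First I would apply Theorem~\ref{erase_triangle} to $H$ with the labelling $e_1=uw$, $e_2=wz$, $e_3=uz$. Since $E(H)=E(G)\cup\{uw\}$ with $uw\notin E(G)$, the three graphs produced by that theorem are $H_{2,3}=(V(G),E(H)-uw)=G$, next $H_{1,3}=(V(G),(E(G)-wz)\cup\{uw\})$, and $H_{3}=(V(G),E(G)-wz)$, so that
\[
{\bf X}_H={\bf X}_G+{\bf X}_{(V(G),\,(E(G)-wz)\cup\{uw\})}-{\bf X}_{(V(G),\,E(G)-wz)}.
\]
Applying Theorem~\ref{erase_triangle} to $J$ with the analogous labelling $e_1=vz$, $e_2=wz$, $e_3=vw$ gives, in exactly the same way,
\[
{\bf X}_J={\bf X}_G+{\bf X}_{(V(G),\,(E(G)-wz)\cup\{vz\})}-{\bf X}_{(V(G),\,E(G)-wz)}.
\]
The first and third terms of the two expansions coincide, so it remains only to show that $A:=(V(G),(E(G)-wz)\cup\{uw\})$ and $B:=(V(G),(E(G)-wz)\cup\{vz\})$ have the same chromatic symmetric function. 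This is where the automorphism enters: $\varphi$ is a permutation of $V(G)$ carrying $E(G)-wz$ onto itself and sending the pair $\{u,w\}$ to $\{v,z\}$, and because $uw\notin E(G)$ and $vz\notin E(G)$ these two pairs lie outside $E(G)-wz$; hence $\varphi$ maps the edge set of $A$ bijectively onto the edge set of $B$. Thus $\varphi\colon A\to B$ is a graph isomorphism, so ${\bf X}_A={\bf X}_B$ is immediate from the definition~(\ref{stan1}) of the chromatic symmetric function, and comparing the two displayed equations yields ${\bf X}_H={\bf X}_J$.

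Once Theorem~\ref{erase_triangle} is in hand the argument is short, so the only genuine decision — and the step I would be most careful about — is the choice of which side of each triangle is assigned the role of $e_3$. Keeping $uz$ in $H$ and $vw$ in $J$ (equivalently, erasing in each case the newly added edge \emph{together with} the shared edge $wz$) is precisely what forces the ``double-erase'' graphs to be literally equal and leaves the ``single-erase'' graphs differing only by the pair $\{u,w\}$ versus $\{v,z\}$, which is exactly the data the automorphism $\varphi$ controls; any other choice of $e_3$ fails to produce this cancellation. As a sanity check I would also note that the argument actually uses only $uw,vz\notin E(G)$ and $\varphi(\{u,w\})=\{v,z\}$, the remaining hypotheses presumably being included for the intended application, where $G$ is essentially acyclic and one wants $H$ and $J$ to be non-isomorphic unicyclic graphs.
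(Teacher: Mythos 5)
Your proof is correct. The paper proves Theorem \ref{P1} from scratch: it partitions the spanning subgraphs of $H$ (and of $J$) into four classes according to which of the three triangle edges they contain, shows that the classes containing both the added edge and $wz$ cancel internally, and matches the two surviving classes of $H$ to those of $J$ by the identity and by $\varphi$ respectively. You instead package the same underlying cancellation as two invocations of Theorem \ref{erase_triangle}, with the crucial labelling choice $e_3=uz$ for $H$ and $e_3=vw$ for $J$ that you correctly flag; this makes the terms ${\bf X}_G$ and ${\bf X}_{(V(G),\,E(G)-wz)}$ literally common to the two expansions and reduces everything to the single observation that $\varphi$ is a graph isomorphism from $(V(G),(E(G)-wz)\cup\{uw\})$ onto $(V(G),(E(G)-wz)\cup\{vz\})$. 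This is arguably a cleaner organization: it exposes the logical dependence of Theorem \ref{P1} on Theorem \ref{erase_triangle}, which the paper leaves implicit, and it replaces the paper's two separate subgraph-level bijections with one isomorphism of whole graphs. One small correction to your closing remark: the argument does also use $uz,wz,vw\in E(G)$, since without them $H$ and $J$ contain no triangles to erase; what is genuinely unused is the hypothesis $uv\notin E(G)$ and the second condition $\varphi(\{v,z\})=\{u,w\}$ (only $\varphi(\{u,w\})=\{v,z\}$ is needed to carry the edge set of the first graph onto that of the second).
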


\begin{proof}
Consider the following partition of the spanning subgraphs of $H$:
\begin{itemize}
\item $H^1$ is the set of all spanning subgraphs of $H$ that do not contain the edge $uw$
\item $H^2$ is the set of all spanning subgraphs of $H$ that do contain the edge $uw$ but not $wz$
\item $H^3$ is the set of all spanning subgraphs of $H$ that contain the edges $uw$ and $wz$, but not $uz$
\item $H^4$ is the set of all spanning subgraphs of $H$ that contain the edges $uw$, $wz$, and $uz$
\end{itemize}
We partition the spanning subgraphs of $J$ similarly:
\begin{itemize}
\item $J^1$ is the set of all spanning subgraphs of $J$ that do not contain $vz$
\item $J^2$ is the set of all spanning subgraphs of $J$ that do contain $vz$ but not $wz$
\item $J^3$ is the set of all spanning subgraphs of $J$ that contain $vz$ and $wz$, but not $vw$
\item $J^4$ is the set of all spanning subgraphs of $J$ that contain $vz$, $wz$, and $vw$
\end{itemize}

First, note that $H^1$ can be put into a bijection with $J^1$ via the map of subgraphs induced by the identification $V(H) = V(J)$. Because this bijection preserves the vertex partition of a subgraph, the net contribution of $H^1$ to ${\bf X}_H$ equals the net contribution of $J^1$ to ${\bf X}_J$.

Next, note that $H^2$ can be put into a bijection with $J^2$ via the map of subgraphs induced by $\varphi$. Because this bijection preserves the vertex partition of a subgraph, the net contribution of $H^2$ to ${\bf X}_H$ equals the net contribution of $J^2$ to ${\bf X}_J$.

Note that the contribution of $H^3$ to ${\bf X}_H$ is the negative of the contribution of $H^4$ to ${\bf X}_H$. This is because the bijection from $H^3$ to $H^4$ that adds the edge $vw$ to each subgraph in $H^3$ preserves the vertex partition of the subgraph while adding an extra edge. Therefore, the contribution of subgraphs of type $H^3$ to ${\bf X}_H$ is exactly cancelled by the contribution of the subgraphs of type $H^4$ to ${\bf X}_H$. Similarly, the contribution of subgraphs of type $J^3$ to ${\bf X}_J$ is exactly cancelled by the contribution of the subgraphs of type $J^4$.

We see that the contribution of $H^1 \cup H^2 \cup H^3 \cup H^4$ to ${\bf X}_H$ is equal to the contribution of $J^1 \cup J^2 \cup J^3 \cup J^4$ to ${\bf X}_J$. Therefore, ${\bf X}_H = {\bf X}_J$.
\end{proof}

Notice that Theorem \ref{P1} can also be used to show that the two unicyclic graphs in Figure \ref{fig:unic} do indeed have the same chromatic symmetric function. In fact, Theorem \ref{P1} actually gives us infinitely many examples of unicyclic graphs having the same chromatic symmetric function. For any two nonisomorphic rooted trees $T_1$ and $T_2$, one can make two copies of each tree and connect the four trees according to the left picture in Figure \ref{fig:creatingident}. By Theorem \ref{P1}, this unicyclic graph will have the same chromatic symmetric function as the graph obtained using the connection rule in the right picture in Figure \ref{fig:creatingident}.

\begin{figure}[ht]
\centering
\begin{tikzpicture}[style=thick]

\draw (2, 1) coordinate (C3) node[below=3pt] {$z$};
\draw (4, 1) coordinate (C4) node[below=3pt] {$w$};
\draw (2, 3) coordinate (C1) node[above=3pt] {$u$};
\draw (4, 3) coordinate (C2) node[above=3pt] {$v$};
\draw (1, 1) coordinate (D3) node {\textbf{$T_1\quad$}};
\draw (1, 3) coordinate (D1) node {\textbf{$T_1\quad$}};
\draw (5, 1) coordinate (D4) node {\textbf{$\quad T_2$}};
\draw (5, 3) coordinate (D2) node {\textbf{$\quad T_2$}};

\draw (9, 1) coordinate  (E3) node[below=3pt] {$z$};
\draw (11, 1) coordinate (E4) node[below=3pt] {$w$};
\draw (9, 3) coordinate  (E1) node[above=3pt] {$u$};
\draw (11, 3) coordinate (E2) node[above=3pt] {$v$};
\draw (8, 1) coordinate  (U3) node {\textbf{$T_1\quad $}};
\draw (8, 3) coordinate  (U1) node {\textbf{$T_1\quad$}};
\draw (12, 1) coordinate (U4) node {\textbf{$\quad T_2$}};
\draw (12, 3) coordinate (U2) node {\textbf{$\quad T_2$}};

\draw(C1) -- (C3);
\draw(C2) -- (C4);
\draw(C4) -- (C3);
\draw(C1) -- (C4);
\draw[draw=black, dashed] (C3) -- (D3);
\draw[draw=black, dashed] (C1) -- (D1);
\draw[draw=black, dashed] (C2) -- (D2);
\draw[draw=black, dashed] (C4) -- (D4);

\draw(E1) -- (E3);
\draw(E2) -- (E3);
\draw(E2) -- (E4);
\draw(E4) -- (E3);
\draw[draw=black, dashed] (E3) -- (U3);
\draw[draw=black, dashed] (E1) -- (U1);
\draw[draw=black, dashed] (E2) -- (U2);
\draw[draw=black, dashed] (E4) -- (U4);

  \fill (C1) circle (3pt) (C2) circle (3pt) (C3) circle (3pt)
    (C4) circle (3pt);

\fill (E1) circle (3pt) (E2) circle (3pt) (E3) circle (3pt)
    (E4) circle (3pt);

\end{tikzpicture}
\caption{Creating unicyclic graphs with identical chromatic symmetric function}
\label{fig:creatingident}
\end{figure}
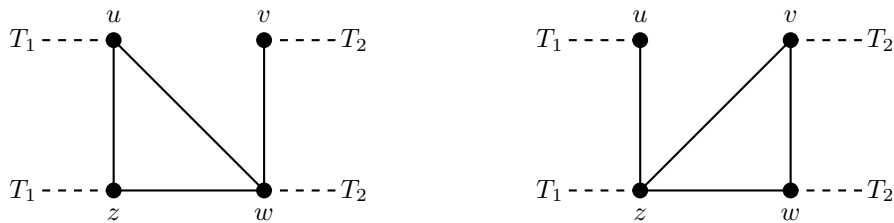

Although Theorem \ref{P1} was motivated by the question of whether unicyclic graphs could share a chromatic symmetric function, its usefulness is not limited to unicyclic graphs. On the contrary, one can use Theorem \ref{P1} to show that graphs such as those in Figure \ref{fig:nonuni} have the same chromatic symmetric function.

\begin{figure}[ht]
\centering
\begin{tikzpicture}[style=thick]

\draw (1, 1) coordinate (A6);
\draw (2, 1) coordinate (A7);
\draw (3, 1) coordinate (A8);
\draw (4, 1) coordinate (A9);
\draw (1, 2) coordinate (A1);
\draw (2, 2) coordinate (A2);
\draw (3, 2) coordinate (A3);
\draw (4, 2) coordinate (A4);
\draw (5, 1.5) coordinate (A5);

\draw (7, 1) coordinate (B6);
\draw (8, 1) coordinate (B7);
\draw (9, 1) coordinate (B8);
\draw (10, 1) coordinate (B9);
\draw (7, 2) coordinate (B1);
\draw (8, 2) coordinate (B2);
\draw (9, 2) coordinate (B3);
\draw (10, 2) coordinate (B4);
\draw (11, 1.5) coordinate (B5);

\draw(A1) -- (A2);
\draw(A2) -- (A3);
\draw(A4) -- (A5);
\draw(A6) -- (A7);
\draw(A7) -- (A8);
\draw(A8) -- (A9);
\draw(A9) -- (A5);
\draw(A1) -- (A6);
\draw(A2) -- (A7);
\draw(A3) -- (A8);
\draw(A4) -- (A9);
\draw(A1) -- (A7);
\draw(A2) -- (A6);

\draw(A3) -- (A9);

\draw(B1) -- (B2);
\draw(B2) -- (B3);
\draw(B4) -- (B5);
\draw(B6) -- (B7);
\draw(B7) -- (B8);
\draw(B8) -- (B9);
\draw(B9) -- (B5);
\draw(B1) -- (B6);
\draw(B2) -- (B7);
\draw(B3) -- (B8);
\draw(B4) -- (B9);
\draw(B1) -- (B7);
\draw(B2) -- (B6);

\draw(B4) -- (B8);

  \fill 
  (A1) circle (3pt)
  (A2) circle (3pt)
  (A3) circle (3pt)
  (A4) circle (3pt)
  (A5) circle (3pt)
  (A6) circle (3pt)
  (A7) circle (3pt)
  (A8) circle (3pt)
  (A9) circle (3pt)

  (B1) circle (3pt)
  (B2) circle (3pt)
  (B3) circle (3pt)
  (B4) circle (3pt)
  (B5) circle (3pt)
  (B6) circle (3pt)
  (B7) circle (3pt)
  (B8) circle (3pt)
  (B9) circle (3pt);
\end{tikzpicture}
\caption{Two non-isomorphic graphs with the same chromatic symmetric function}
\label{fig:nonuni}
\end{figure}
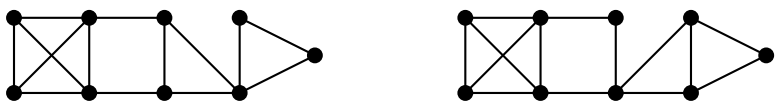


\section{Chromatic symmetric functions of trees}

In \cite{bST}, Stanley asks whether a tree $T$ is determined by ${\bf X}_T$. This question is still open. Throughout this section, we aim to prove results bringing us closer to answering this question. In particular, we will prove a classification theorem of trees related to Stanley's question.

Let $\mathcal{P}(X)$ denote the power set of a set $X$, and let Part($k$) denote the partitions of an integer $k$. For a tree $T$, we define the function
$$\theta_{T}: \mathcal{P}(E(T)) \rightarrow \textrm{Part}(\#V(T))$$
by $\theta_{T}(\{e_1, e_2, \dots, e_k\}) = \pi(E(T) - \{e_1, e_2, \dots, e_k\})$, where for $S \subseteq E(G)$, the partition $\pi(S)$ of $\#V(G)$ has as its parts the number of vertices in each connected component of the graph $(V(G), S)$. 

For positive integers $\{a_i\}_{1 \leq i \leq p}$ that sum to $n$, we define the rearrange function 
\[
\textrm{re}:(a_1, a_2, \dots, a_r)  \rightarrow \textrm{Part}(n)
\]
that sends a collection of $r$ integers to the partition having $a_1, a_2, \dots, a_r$ as its parts (recall that a partition is always written in weakly decreasing order). We define an ordering on the 2-part partitions of $n$ by $(n-i, i) > (n-j, j)$ if $i > j$.

Throughout this section, we will pay particular attention to the image under $\theta_T$ of singleton and 2-element sets of edges. Because $T$ is a tree, for any set $S$ of edges, $\theta_T(S)$ will have $\#S + 1$ parts. In particular, $\theta_T$ sends singleton sets to 2-part partitions of $\#V(T)$, and sends 2-element sets to 3-part partitions of $\#V(T)$. The following lemma can be easily proved by contradiction. 

\begin{lemma}  \label{centroid}
Let $T$ be a tree with even order $n$. Then there is at most one edge $e$ in $T$ such that $\theta_T(e) = (\frac{n}{2},\frac{n}{2})$.  If such an edge exists, it joins two centroids. 
\end{lemma}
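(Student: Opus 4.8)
The proof is by contradiction, exploiting the characterization of the centroid via vertex weights. Suppose $T$ has even order $n$ and contains an edge $e = xy$ with $\theta_T(e) = (\tfrac{n}{2}, \tfrac{n}{2})$; removing $e$ splits $T$ into two subtrees $T_x \ni x$ and $T_y \ni y$, each with $\tfrac n2$ vertices. First I would show that $x$ and $y$ are centroids. For any vertex $v$ of a tree, its weight equals the maximum, over the components of $T$ obtained by deleting $v$, of (number of edges in that component) $+\,1$; equivalently $n-1$ minus the size of the smallest ``branch'' complement. For $x$, deleting $x$ produces components all lying inside $T_x$ together with one extra component which is $T_y$ with the edge to $x$ reattached — a subtree of $\tfrac n2 + 1$ vertices, i.e.\ $\tfrac n2$ edges. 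Since the $T_x$-side components together have only $\tfrac n2 - 1$ vertices, each has at most $\tfrac n2 - 1$ edges, so the weight of $x$ is exactly $\tfrac n2$. By symmetry the weight of $y$ is also $\tfrac n2$.

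Next I would argue $\tfrac n2$ is the \emph{minimum} weight. The standard fact (Proposition, \cite{bBo} p.~99) is that a centroid vertex $c$ has weight at most $\lceil n/2 \rceil = \tfrac n2$: moving along the edge toward the largest branch never increases weight, and a centroid is a local minimum. Combined with the obvious lower bound that every vertex has weight $\ge \lceil n/2\rceil$ (the two largest branches at any vertex cannot both have fewer than $\tfrac n2$ edges' worth of vertices since they partition the remaining $n-1$ vertices) — more carefully, weight $\ge \tfrac n2$ because some branch plus the vertex itself must contain at least $\lceil (n-1)/2\rceil + 1 = \tfrac n2$ vertices for $n$ even — we get that $x$ and $y$ both realize the minimum weight $\tfrac n2$, hence both are centroids. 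Since $T$ has a centroid of one vertex or two adjacent vertices, and $x \ne y$ are adjacent, the centroid is exactly $\{x,y\}$ and $e$ joins the two centroids.

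For uniqueness, suppose $e' = x'y'$ is a second such edge, $e' \ne e$. By the argument above $\{x',y'\}$ is also the centroid, so $\{x,y\} = \{x',y'\}$, forcing $e' = e$, a contradiction. Alternatively, and perhaps cleaner: if $e \ne e'$ both give the partition $(\tfrac n2, \tfrac n2)$, then one endpoint of $e'$, say $x'$, lies in $T_x$ and is not $x$ (if both endpoints of $e'$ were in $T_x$ they'd split a $\tfrac n2$-vertex tree unevenly, impossible; and $e'$ cannot equal $e$ nor straddle differently). Then the path from $x'$ to $y$ crosses $e$, and the component of $T\setminus e'$ containing $y$ properly contains $T_y\cup\{x\}$, giving that side strictly more than $\tfrac n2$ vertices — contradicting $\theta_T(e') = (\tfrac n2,\tfrac n2)$.

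The main obstacle is purely bookkeeping: making the relationship between ``$\theta_T(e) = (\tfrac n2,\tfrac n2)$'' and ``both endpoints have weight $\tfrac n2$'' precise, i.e.\ correctly translating between edge-deletion component sizes and the weight function defined via subtrees containing a vertex as a leaf. Once that dictionary is set up the rest is immediate, which is why the paper calls it ``easily proved by contradiction.''
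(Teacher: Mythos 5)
The paper itself offers no proof of this lemma (it is dismissed as ``easily proved by contradiction''), so your proposal must be judged on its own merits. The overall architecture is right: compute the weights of the endpoints of $e$, show they are minimal, and invoke the fact that a centroid consists of one vertex or two adjacent ones. Your dictionary between the paper's weight function and branch sizes (the weight of $v$ equals the largest number of vertices in a component of $T - v$) is correct, as is the computation that both endpoints of $e$ have weight $\frac{n}{2}$. Your second uniqueness argument, via ``the component of $T \setminus e'$ containing $y$ properly contains $T_y \cup \{x\}$,'' is also sound, although the surrounding case analysis is garbled: since $e$ is the only edge joining $T_x$ to $T_y$, any other edge $e'$ has \emph{both} endpoints on one side, say in $T_x$, and then the component of $T - e'$ containing $y$ has more than $\frac{n}{2}$ vertices; there is no need to locate ``one endpoint of $e'$ in $T_x$ that is not $x$.''

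The genuine gap is the ``obvious lower bound that every vertex has weight $\ge \lceil n/2\rceil$.'' This is false: in the star $K_{1,n-1}$ the center has weight $1$ (the only subtrees containing it as a leaf are single edges), and your parenthetical justification tacitly assumes the vertex has exactly two branches. Consequently the step ``hence $x$ and $y$ both realize the minimum weight'' does not follow as written. The bound is true \emph{under the hypothesis of the lemma}, and that is what must be proved: for any $v \notin \{x,y\}$, say $v \in T_x$, the unique path from $v$ to $y$ passes through the edge $xy$, so the component of $T - v$ containing $y$ contains all of $T_y \cup \{x\}$ and hence at least $\frac{n}{2} + 1$ vertices; thus $v$ has weight at least $\frac{n}{2}+1 > \frac{n}{2}$. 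This shows directly that $x$ and $y$ are the only vertices of minimum weight, which completes the ``joins two centroids'' claim and, since the endpoints of any other edge $e'$ with $\theta_T(e') = (\frac{n}{2}, \frac{n}{2})$ would also have to be the two centroids, gives uniqueness without appeal to any general fact about centroid weights.
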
 

Hence, we have that we can tell from ${\bf X}_T$ if the tree $T$ has one or two centroids.  

\begin{proposition}\label{splits}
Let $T$ be a tree with  order $n$ having the distinct edges $e_a$ and $e_b$. Let $\theta_T(e_a) = (n-i, i)$ and $\theta_T(e_b) = (n-k, k)$ and without loss of generality assume that $i \geq k$.  If $k=i$ then $\theta_T(\{e_a, e_b\})= re(n-2i, i,i)$  and if $k<i$ then $\theta_T(\{e_a, e_b\})$ is either $\textrm{re}(n-i-k, i, k)$ or $\textrm{re}(n-i, i-k, k)$.
\end{proposition}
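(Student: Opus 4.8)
The plan is to track how the edges $e_a$ and $e_b$ sit inside $T$. Deleting $e_a$ disconnects $T$ into two subtrees; write $T\setminus e_a=A\sqcup A'$ with $\#V(A)=n-i$ and $\#V(A')=i$, so that (since $(n-i,i)$ is a partition) $A'$ is the weakly smaller side and $i\le n/2$. Because $e_a\neq e_b$, the edge $e_b$ belongs to exactly one of the subtrees $A$ or $A'$, and the argument splits into these two cases. In each case I will delete $e_b$ from whichever of $A$, $A'$ contains it --- obtaining two nonempty pieces, since an edge always separates a tree into two nonempty parts --- identify which of those pieces is the side of $e_b$ of size $k$, and then read off the sizes of the three components of $T\setminus\{e_a,e_b\}$.

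First suppose $e_b\in E(A')$. The subgraph obtained by adjoining $A$, the edge $e_a$, and the part of $A'$ on the $e_a$-side of $e_b$ is connected and avoids $e_b$; hence it lies in one component of $T\setminus e_b$, and the other component $C$ of $T\setminus e_b$ is contained in $A'$, with $A'\setminus C$ attaching to $A$. The two sides of $e_b$ in $T$ have sizes $k$ and $n-k$, and $\#V(C)\le\#V(A')=i$. If $k=i$ this forces $C=A'$, contradicting that $A'\setminus C$ is nonempty; so $k<i$, and since $n-k>n-i\ge i\ge\#V(C)$ we must have $\#V(C)=k$, whence $\#V(A'\setminus C)=i-k$. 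Thus $T\setminus\{e_a,e_b\}$ has components of sizes $n-i,\ k,\ i-k$, i.e. $\theta_T(\{e_a,e_b\})=\textrm{re}(n-i,\,i-k,\,k)$, and this case occurs only when $k<i$.

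Now suppose $e_b\in E(A)$. Since $A'$ avoids $e_b$ it is connected in $T\setminus e_b$, hence lies wholly on one side of $e_b$; deleting $e_b$ also splits $A$ into two nonempty pieces. If $k<i$, then $A'$ cannot lie on the size-$k$ side of $e_b$ (that side would contain $\ge\#V(A')=i>k$ vertices), so $A'$ lies on the size-$(n-k)$ side and the size-$k$ side is contained in $A$; deleting both edges then gives components of sizes $i,\ k,\ n-i-k$ (with $n-i-k\ge 0$ because $i,k\le n/2$), i.e. $\theta_T(\{e_a,e_b\})=\textrm{re}(n-i-k,\,i,\,k)$. If $k=i$, then by the previous paragraph $e_b\notin E(A')$, so indeed $e_b\in E(A)$; moreover $A'$ cannot be the size-$i$ side of $e_b$, since then $e_a$ and $e_b$ would induce the same bipartition of $V(T)$ and hence coincide --- so once more $A'$ lies on the size-$(n-i)$ side and the size-$i$ side sits inside $A$, yielding components of sizes $i,\ i,\ n-2i$, i.e. $\theta_T(\{e_a,e_b\})=\textrm{re}(n-2i,\,i,\,i)$. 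Together the two cases give exactly the claimed dichotomy.

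The routine part is reading off the three part-sizes once the combinatorial picture is in place; the one genuinely delicate point --- the main obstacle --- is the bookkeeping of the containments among the two sides of $e_a$ and the two sides of $e_b$, namely deciding which side of $e_b$ is absorbed into which side of $e_a$, and ruling out the degenerate configurations $C=A'$ and ``$A'$ equals a side of $e_b$'' (equivalently $e_a=e_b$). Everything else follows from the fact that in a tree a single edge determines, and is determined by, the bipartition of the vertex set obtained by removing it.
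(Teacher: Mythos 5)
Your proof is correct, and it reaches the conclusion by a more hands-on route than the paper. The shared starting point is the same: delete $e_a$ to get components $A$, $A'$ of sizes $n-i$, $i$ and split into cases according to which component contains $e_b$. But from there the paper argues more abstractly: one of $n-i$, $i$ must survive as a part of $\theta_T(\{e_a,e_b\})$ (the component not containing $e_b$ is untouched), and symmetrically one of $n-k$, $k$ must survive; the inequalities $n-i+n-k\geq n$ and $i+n-k\geq n$ (using $i\geq k$) then eliminate all but the two claimed combinations. You instead identify all three components explicitly in each configuration, tracking which side of $e_b$ is absorbed into which side of $e_a$. Your version is longer but buys something concrete: it shows directly that the outcome $\textrm{re}(n-i,i-k,k)$ occurs exactly when $e_b$ lies in $A'$ (equivalently, when the small side of $e_b$ is nested inside the small side of $e_a$), which is precisely the geometric fact the paper later extracts ``from the proof of Proposition \ref{splits}'' when proving Proposition \ref{theta_to_attract}. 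You also explicitly rule out the degenerate configurations (e.g.\ $A'$ coinciding with a side of $e_b$ forcing $e_a=e_b$), which the paper's terse treatment of the $i=k$ case leaves to the reader. Both arguments are sound.
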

\begin{proof}
Upon removing $e_a$, the tree $T$ is divided into two connected components containing $n-i$ and $i$ vertices.  If $i=k$, then we can only remove $e_b$ from the component with $n-i$ edges, hence the result follows.  If $k<i$,  $e_b$ may be in either connected component. Its removal leaves one of these components unchanged. Therefore, either $n-i$ or $i$ must be present in $\theta_T(\{e_a, e_b\})$. Similarly, by removing $e_b$ before $e_a$, we see that either $n-k$ or $k$ must be present in $\theta_T(\{e_a, e_b\})$. However, both $n-i$ and $n-k$ cannot be present in $\theta_T(\{e_a, e_b\})$ since $n-i + n-k \geq n$. Similarly, both $i$ and $n-k$ cannot be present in $\theta_T(\{e_a, e_b\})$, since $i + n-k \geq n$ (recalling the fact that $k < i$). Therefore, $\theta_T(\{e_a, e_b\})$ must either contain both $k$ and $i$ or it must contain $n-i$ and $k$. Since the sum of the numbers in the partition of $n$ must add to $n$, it follows that $\theta_T(\{e_a, e_b\})$ is either re$(n-i-k, i, k)$ or re$(n-i, i-k, k)$.
\end{proof}

\begin{definition} Let $T$ be a tree. We say that the distinct edges $e_a$ and $e_b$ of $T$ {\bf attract} if there is a path in $T$ containing both $e_a$ and $e_b$ and having a centroid of $T$ as one endpoint. Otherwise, the edges $e_a$ and $e_b$ {\bf repel}.
\end{definition}

This terminology is motivated by imagining each edge of $T$ as a bar magnet, the positive side of which is pointed towards the centroid(s). In the same way that two magnets attract if the positive side of one is pointed toward the negative side of the other, we say that two edges attract if a path connects the ``positive'' side of one edge to the ``negative'' side of the other. It can be checked that this corresponds to the formal definition of {\em attract} given above. On the other hand, two edges repel if a path connects the ``positive'' side of one to the ``positive'' side of the other. Figure \ref{fig:attractedges} shows an example: the dotted edges repel $e$, while the thick edges attract $e$. Vertex $c$ is the centroid of the tree. From the definition, it can be checked that if $T$ has two centroids, then the edge connecting them attracts every edge in $T$. We proceed by exploring a few properties of edge attraction and repulsion.

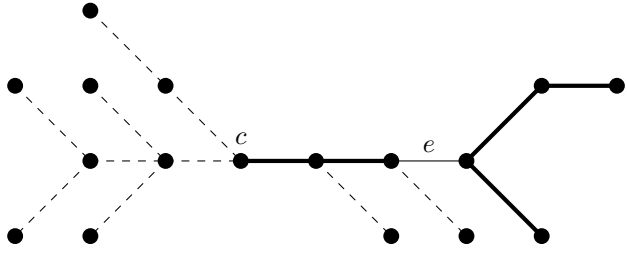
\begin{figure}[ht]
\centering
\begin{tikzpicture}

\tikzstyle{every node} = [node distance=1.0cm]

\draw (1,1) coordinate (A13);
\draw (2,1) coordinate (A14);
\draw (6,1) coordinate (A15);
\draw (7,1) coordinate (A16);
\draw (8,1) coordinate (A17);

\draw (2,2) coordinate (A7);
\draw (3,2) coordinate (A8);
\draw (4,2) coordinate (A9) node[above=3pt] {$c$};
\draw (5,2) coordinate (A10);
\draw (6,2) coordinate (A11);
\draw (7,2) coordinate (A12);

\draw (1,3) coordinate (A2);
\draw (2,3) coordinate (A3);
\draw (3,3) coordinate (A4);
\draw (8,3) coordinate (A5);
\draw (9,3) coordinate (A6);

\draw (2,4) coordinate (A1);

\draw[draw=black, dashed] (A1) -- (A4);
\draw[draw=black, dashed] (A2) -- (A7);
\draw[draw=black, dashed] (A3) -- (A8);
\draw[draw=black, dashed] (A4) -- (A9);
\draw[draw=black, ultra thick] (A5) -- (A12);
\draw[draw=black, ultra thick] (A5) -- (A6);

\draw[draw=black, dashed] (A7) -- (A8);
\draw[draw=black, dashed] (A8) -- (A9);
\draw[draw=black, ultra thick] (A9) -- (A10);
\draw[draw=black, ultra thick] (A10) -- (A11);
\draw[draw=black, thin] (A11) -- (A12)  node [midway, above] {$e$};

\draw[draw=black, dashed] (A7) -- (A13);
\draw[draw=black, dashed] (A8) -- (A14);
\draw[draw=black, dashed] (A10) -- (A15);
\draw[draw=black, dashed] (A11) -- (A16);
\draw[draw=black, ultra thick] (A12) -- (A17);

  \fill (A1) circle (3pt) 
  (A2) circle (3pt)
  (A3) circle (3pt)
  (A4) circle (3pt)
  (A5) circle (3pt)
  (A6) circle (3pt)
  (A7) circle (3pt)
  (A8) circle (3pt)
  (A9) circle (3pt)
  (A10) circle (3pt)
  (A11) circle (3pt)
  (A12) circle (3pt)
  (A13) circle (3pt)
  (A14) circle (3pt)
  (A15) circle (3pt)
  (A16) circle (3pt)
  (A17) circle (3pt);
  
\end{tikzpicture}
\caption{The bold edges attract $e$.}
\label{fig:attractedges}
\end{figure}

\begin{proposition} \label{theta_to_attract}
Let $e_a$ and $e_b$ be distinct edges of a tree $T$ with $\theta_T(e_a) = (n-i, i)$ and $\theta_T(e_b) = (n-k, k)$ with $i \geq k$. Then $\theta_T(e_a, e_b) = re(n-i, i-k, k)$ if and only if $e_a$ and $e_b$ attract.
\end{proposition}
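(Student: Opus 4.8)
The plan is to lean on Proposition \ref{splits}, which already reduces $\theta_T(\{e_a,e_b\})$ to the two possibilities $P_1:=\mathrm{re}(n-i,i-k,k)$ and $P_2:=\mathrm{re}(n-i-k,i,k)$. These are distinct partitions once $i>k$ and $i<n/2$ (a quick multiset comparison shows equality would force $k=0$ or $i=n/2$). The degenerate parameter cases are disposed of at the end: if $i=k$ then Proposition \ref{splits} directly gives $\theta_T(\{e_a,e_b\})=\mathrm{re}(n-2i,i,i)$ and one checks that $e_a,e_b$ must repel, so the biconditional holds vacuously; if $i=n/2$ then by Lemma \ref{centroid} the edge $e_a$ joins the two centroids and hence attracts every edge (by the remark following the definition of \emph{attract}), while $P_1=P_2$, so the biconditional again holds. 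Granting $P_1\neq P_2$, it suffices to prove the two implications ``$e_a,e_b$ attract $\Rightarrow\theta_T(\{e_a,e_b\})=P_1$'' and ``$e_a,e_b$ repel $\Rightarrow\theta_T(\{e_a,e_b\})=P_2$'', since exactly one of attract and repel holds.

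Both implications rest on two observations. First, deleting $e_a$ and $e_b$ from $T$ leaves three subtrees $A,B,C$ which, after contracting each to a point, form a path; thus one of $e_a,e_b$ is the bridge joining $A$ to $B$ and the other joins $B$ to $C$, we have $\theta_T(\{e_a,e_b\})=\mathrm{re}(|A|,|B|,|C|)$ (writing $|\cdot|$ for the number of vertices), and each of $\theta_T(e_a),\theta_T(e_b)$ is recovered by merging two of these three parts across the appropriate bridge. Second, the key lemma: for any centroid $c$ and any edge $e$, the component of $T-e$ not containing $c$ is connected and avoids $c$, so it lies inside a single component of $T-c$; since $c$ is a centroid, every component of $T-c$ has at most $\lfloor n/2\rfloor$ vertices, whence the $c$-free side of $e$ has at most $n/2$ vertices. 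In particular, if $\theta_T(e)=(n-j,j)$ with $j<n/2$, then the $c$-free side of $e$ has exactly $j$ vertices and the other side has $n-j$.

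For the attract direction, fix a centroid $c$ on a path of $T$ containing both $e_a$ and $e_b$; such a path has one endpoint in each of the two outer subtrees among $A,B,C$, so $c$ lies in an outer subtree. Name the three subtrees so that $B$ is the middle one and $c\in A$ (the case with $c$ in the other outer subtree is symmetric). Suppose the $A$--$B$ bridge were $e_b$: the $c$-free side of $e_b$ is then $B\cup C$, so by the key lemma $|B\cup C|=k$, and applying the key lemma to $e_a$ (the $B$--$C$ bridge, whose $c$-free side is $C$) gives $|C|=i$; but $C\subseteq B\cup C$ forces $i=|C|\le|B\cup C|=k$, contradicting $i>k$. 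Hence the $A$--$B$ bridge is $e_a$; the key lemma now yields $|B\cup C|=i$ and $|C|=k$, so $|A|=n-i$ and $|B|=i-k$, i.e. $\theta_T(\{e_a,e_b\})=\mathrm{re}(n-i,i-k,k)=P_1$. For the repel direction, observe first that if a centroid lay in an outer subtree then any path from it to the opposite outer subtree would cross both bridges, making $e_a,e_b$ attract; so under ``repel'' every centroid lies in the middle subtree. Choosing such a centroid $c$ and applying the key lemma to $e_a$ and $e_b$ (whose $c$-free sides are now the two outer subtrees), the two outer subtrees must have the small sizes $i$ and $k$, so the middle one has $n-i-k$ vertices and $\theta_T(\{e_a,e_b\})=\mathrm{re}(n-i-k,i,k)=P_2$.

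I expect the only genuinely delicate point to be the case analysis of which of $e_a,e_b$ is the bridge adjacent to the centroid's subtree in the attract step, and pinning down exactly where the hypothesis $i>k$ is invoked to kill the contradictory configuration; the remaining steps are routine arithmetic with the three part sizes together with the centroid property. One must also not forget to use the remark that the edge between two centroids attracts every edge when handling the $i=n/2$ degenerate case (and that $k=n/2$ together with $i\ge k$ forces $i=k=n/2$, which is impossible for distinct edges by Lemma \ref{centroid}), and to invoke Proposition \ref{splits} for the case $i=k$.
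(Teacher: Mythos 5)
Your proof is correct and rests on the same essential idea as the paper's: locate the centroid relative to the components cut off by the edges and use the fact that every branch at a centroid contains at most $n/2$ vertices. The paper's version is terser --- it deletes only $e_a$, splits into two cases according to where the centroids land, and defers the final partition computation to the proof of Proposition \ref{splits} --- whereas you make the three-component decomposition, the verification that $\mathrm{re}(n-i,i-k,k)\neq\mathrm{re}(n-i-k,i,k)$ in the generic case, and the degenerate cases $i=k$ and $i=n/2$ explicit, all of which the paper handles implicitly or leaves to the reader.
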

\begin{proof}
Suppose we remove $e_a$ from $T$ to form a forest with two connected components: $T_{n-i}$ which contains $n-i$ vertices and $T_i$ which contains $i$ vertices. We split the proof into two cases: either $T_{n-i}$ and $T_{i}$ contain one centroid of $T$ each, or all the centroids are contained in $T_{n-i}$.

In the first case, the endpoints of $e_a$ must be the centroids of $T$. In this case, by Lemma \ref{centroid}  $\theta_T(e_a) = (\frac{n}{2}, \frac{n}{2})$ and $e_a$ attracts every edge of $T$. From Proposition \ref{splits}, $\theta_T(e_a, e_b)$ is either $\textrm{re}(n-\frac{n}{2}-k, \frac{n}{2}, k)$ or $\textrm{re}(n-\frac{n}{2}, \frac{n}{2}-k, k)$. However, these partitions are identical. In this first case, $e_a$ and $e_b$ attract if and only if $\theta_T(e_a, e_b) = (\frac{n}{2}, \frac{n}{2} - k, k)$.

In the second case, only $T_{n-i}$ contains centroids of $T$. If $e_b$ is also contained in $T_{n-i}$, then every path from $e_b$ to a centroid would also be contained in $T_{n-i}$ and would avoid $e_a$. Therefore, $e_a$ attracts $e_b$ if and only if $e_b$ is in $T_i$. By the proof of Proposition \ref{splits}, $e_b$ is in $T_i$ exactly when $\theta_T(e_a, e_b) = (n-i, i-k, k)$.
\end{proof}

Notice that from Proposition \ref{theta_to_attract} we have that $\theta_T(e_a,e_b)$ is not a three part partition when $n=k$.  Thus we have the following corollary. 

\begin{corollary}
If $\theta_T(e_a)=\theta_T(e_b)$, then $e_a$ and $e_b$ repel.  
\end{corollary}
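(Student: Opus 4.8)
The plan is to derive the corollary directly from Proposition~\ref{theta_to_attract}, using only the bookkeeping fact that $\theta_T$ of a two-element edge set is always a genuine three-part partition.

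First I would unpack the hypothesis. Since $e_a$ and $e_b$ are distinct edges (as required by the definition of \emph{attract}/\emph{repel}), write $\theta_T(e_a) = (n-i,i)$ and $\theta_T(e_b) = (n-k,k)$ in weakly decreasing order, so that $i,k \le n/2$, and after possibly swapping the two edges assume $i \ge k$. The assumption $\theta_T(e_a) = \theta_T(e_b)$ then forces the two $2$-part partitions $(n-i,i)$ and $(n-k,k)$ to coincide, hence $i = k$.

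Next I would bring in the two ingredients. Because $T$ is a tree, deleting any two edges produces exactly three components, so $\theta_T(\{e_a,e_b\})$ has exactly three parts (this is the observation recorded just before Lemma~\ref{centroid}). And Proposition~\ref{theta_to_attract} tells us that $e_a$ and $e_b$ attract if and only if $\theta_T(\{e_a,e_b\}) = \mathrm{re}(n-i,\,i-k,\,k)$. Substituting $i=k$, the right-hand side is $\mathrm{re}(n-i,0,i)$, i.e.\ the two-part partition $(n-i,i)$. Since a two-part partition cannot equal a three-part partition, the criterion of Proposition~\ref{theta_to_attract} fails, so $e_a$ and $e_b$ do not attract; that is, they repel.

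The only subtlety — and it is the step worth stating carefully — is the degeneracy of the case $i=k$: one must observe that the middle entry $i-k$ simply disappears (rather than being recorded as a part equal to $0$), which is precisely the remark the text flags in the sentence preceding the statement. An alternative route avoids the part-counting remark and instead uses Proposition~\ref{splits} directly, which in the case $k=i$ gives $\theta_T(\{e_a,e_b\}) = \mathrm{re}(n-2i,i,i)$; combined with Lemma~\ref{centroid} to rule out $n-2i=0$ when $e_a\neq e_b$, this again exhibits a genuine three-part partition and reaches the same contradiction. Either way there is no real obstacle: the corollary is immediate once Proposition~\ref{theta_to_attract} is in hand.
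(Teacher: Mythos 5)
Your proof is correct and is essentially the paper's own argument: the one-line justification the paper gives before the corollary is exactly your observation that when $i=k$ the partition $\textrm{re}(n-i,i-k,k)$ required by Proposition \ref{theta_to_attract} degenerates to two parts, whereas $\theta_T(\{e_a,e_b\})$ must have three parts since $T$ is a tree. Your alternative route through Proposition \ref{splits} and Lemma \ref{centroid} is only a minor variant of the same idea, so there is nothing further to compare.
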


\begin{remark}\label{nosep}
We say that an edge $e_a$ \emph{separates} an edge $e_b$ from a vertex $v$ if $e_a$ and $v$ are in different connected components of the graph after removing the edge $e_a$. If a tree $T$ has a single centroid and $e_a$ separates $e_b$ from the centroid, then $\theta_T(e_a) > \theta_T(e_b)$.
\end{remark}
This remark follows from the definition of $\theta_T$. We are now ready to prove the main theorem of this section.

\begin{theorem}\label{barmagnet}
Given the set of edges $\{e_1, e_2, \ldots, e_{n-1}\}$ of a tree $T$ with a single centroid and the values $\theta_{T}(e_i)$ and $\theta_{T}(e_i, e_k)$ for all distinct edges $e_i, e_k \in T$, then $T$ can be constructed from this data. In other words,  the values  $\theta_{T}(e_i)$ and $\theta_{T}(e_i, e_k)$ determine the tree. 

\end{theorem}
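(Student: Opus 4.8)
The plan is to reconstruct $T$ from the given data by first orienting every edge toward the centroid, then recovering the ``ancestor'' partial order on the edge set (thinking of $T$ as rooted at its centroid), and finally gluing the edges back together according to the covering relations of that order.

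First I would fix notation. Let $c$ be the unique centroid of $T$ and let $n=\#V(T)$, which equals the sum of the parts of $\theta_T(e_i)$ for any $i$. For each edge $e_i$ write $\theta_T(e_i)=(n-s_i,s_i)$ with $s_i\le n/2$. Because $T$ has a single centroid, Lemma~\ref{centroid} forbids any edge with $\theta_T(e_i)=(\tfrac n2,\tfrac n2)$, so in fact $s_i<n/2$ for every $i$; hence removing $e_i$ splits $T$ into a component of $n-s_i$ vertices containing $c$ and a component of $s_i$ vertices not containing $c$ (its \emph{far component}). Thus from $\theta_T$ alone we know, for each edge, the size of its far component.

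Next I would determine, for every pair of distinct edges $e_a,e_b$ with $s_a\ge s_b$, whether they attract or repel. By Proposition~\ref{splits}, $\theta_T(e_a,e_b)$ is one of $\textrm{re}(n-s_a-s_b,s_a,s_b)$ or $\textrm{re}(n-s_a,s_a-s_b,s_b)$, and by Proposition~\ref{theta_to_attract} the second occurs exactly when $e_a$ and $e_b$ attract. The single-centroid hypothesis enters here: these two partitions are always \emph{distinct}, since they agree as multisets only when $n=2s_a$, which is ruled out by $s_a<n/2$. (When $s_a=s_b$ the two formulas coincide and the edges repel by the corollary to Proposition~\ref{theta_to_attract}, so there is nothing to decide.) Hence the given data determines, for every pair of edges, whether they attract or repel. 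Now define a relation $\prec$ on $E(T)$ by $e_a\prec e_b$ iff $e_a$ and $e_b$ attract and $s_a>s_b$. Using the bar-magnet picture together with Remark~\ref{nosep}, one checks that $e_a\prec e_b$ holds precisely when $e_a$ separates $e_b$ from $c$, i.e.\ $e_a$ lies on the unique path from $c$ into the far component of $e_b$, and that $\prec$ is a strict partial order whose set of predecessors of any fixed edge is totally ordered. In other words $\prec$ is exactly the ancestor order on the edges of $T$ rooted at $c$, and it has been extracted from the data.

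Finally I would reconstruct $T$. From $\prec$ pass to its covering relation: each edge $e_i$ that is not $\prec$-minimal has a well-defined parent $p(e_i)$, namely the $\prec$-maximal edge $e_j$ with $e_j\prec e_i$ (well-defined because the predecessors of $e_i$ are totally ordered). Build a graph $\widehat T$ with vertex set $\{r\}\cup\{w_i : e_i\in E(T)\}$, adding for each $i$ an edge from $w_i$ to $w_j$ when $e_j=p(e_i)$, and from $w_i$ to $r$ when $e_i$ is $\prec$-minimal. I claim $\widehat T\cong T$: the map $r\mapsto c$, $w_i\mapsto(\text{the endpoint of }e_i\text{ in its far component})$ is a bijection $V(\widehat T)\to V(T)$, since each non-centroid vertex of $T$ is the far endpoint of exactly one edge; and it carries edges of $\widehat T$ to edges of $T$, because the far endpoint of $p(e_i)$ is exactly the near endpoint of $e_i$, while the $\prec$-minimal edges are exactly those incident to $c$. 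This recovers $T$.

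I expect the main obstacle to be not a single deep step but the bookkeeping of the last two paragraphs: confirming that the relation $\prec$ read off from the data genuinely is the ancestor order (so that $p$ is well defined and all $\prec$-minimal edges really do meet at the centroid), and that gluing along the covering relation reproduces $T$ and not some other tree carrying the same $\prec$-data. Proposition~\ref{theta_to_attract} supplies the substantive content; what remains is to organize these combinatorial verifications carefully, with attention to the degenerate cases $s_a=s_b$ and small $n$.
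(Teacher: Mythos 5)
Your proposal is correct and follows essentially the same route as the paper: both rest on Proposition \ref{theta_to_attract} to read the attract/repel relation off the data, together with the observation that the edges attracting a given edge $e$ (and having larger far component) are exactly the edges on the path from the centroid to $e$. The paper packages this as a greedy edge-by-edge insertion in decreasing order of $\theta_T$-value while you extract the ancestor poset and its covering relation all at once, and you additionally make explicit the point, implicit in the paper, that the two candidate partitions from Proposition \ref{splits} are distinct precisely because a single-centroid tree has no edge with $\theta_T(e)=(\tfrac{n}{2},\tfrac{n}{2})$.
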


\begin{proof}
After reindexing the edges, we may assume that $\theta_T(e_i) \geq \theta_T(e_{i+1})$ for all $i < n$.  We will construct $T$ by adding edges, one by one, to a forest of $n$ isolated vertices. We will add the edges in the order of their index. During each step, we will use the values of $\theta_T$ given to determine the unique way to add the next edge. This process will create a sequence of forests $T_i$, where $T_i$ is the resulting forest after adding the first $i$ edges.  In the course of the construction, we will see that for $i \geq 1$, $T_i$ consists of $n-1-i$ isolated vertices and one tree with $i+1$ vertices.\footnote{This explains our choice to use the letter $T_i$ to denote the forests, not $F_i$.}

Notice that the first edge placed must have the centroid as an endpoint. Clearly, $T_1$ consists of $e_1$ together with $n-2$ isolated vertices. Now assume that we have placed the first $i$ edges, and that $T_i$ consists of a tree and $n - 1 - i$ isolated vertices. We wish to place the $(i+1)^{\textrm{st}}$ edge. We will show two things. First, $e_{i+1}$ must be attached to the tree with $i$ edges in $T_i$ (therefore $T_{i+1}$ will consist of a tree and $n - i$ isolated vertices). Second, we will show that there is one unique permissible way to attach the edge $e_{i+1}$ to the tree in $T_i$.

If we suppose that $e_{i+1}$ is not attached to the tree with $i$ edges in the forest  $T_i$, then in $T$ the edge $e_{i+1}$ will be separated from the centroid by an edge $e_j$ with $j > i+1$. By our indexing of the edges, $\theta_T(e_{i+1}) \geq \theta_{T}(e_j)$. However, this is forbidden by Remark \ref{nosep}. Therefore, $e_{i+1}$ is attached to the tree in $T_i$.

To see where $e_{i+1}$ must be attached, we need only look at which edges it attracts. By Proposition \ref{theta_to_attract}, we can determine which edges of $T_i$ attract $e_{i+1}$ and which edges of $T_i$ repel $e_{i+1}$ from the values $\theta(e_i, e_j)$ (for $e_j \in T_i$). The set of edges of $T_i$ that attract $e_{i+1}$ form a path from the centroid to the vertex $v$ that is adjacent to $e_{i+1}$.   This is evident from the definition of attraction and repulsion. Since there is a unique path from the centroid to the vertex $v$, there is a unique way to attach the edge $e_{i+1}$ to the forest $T_i$.   Thus we extend $T_i$ to $T_{i+1}$ by placing $e_{i+1}$ at the end of the path created by edges that attract it. 
\end{proof}

At first glance, the conditions outlined in Theorem \ref{barmagnet} seem very strict, and one might wonder whether they may be relaxed. For example, perhaps it is possible to construct $T$ knowing only $\theta_{T}(e)$ for all edges $e$ of $T$. This is not the case. Consider the following trees.

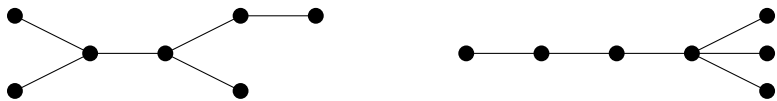
\begin{figure}[ht]
\centering
\begin{tikzpicture}

\tikzstyle{every node} = [node distance=1.0cm]

\draw (1,1.5) coordinate (A1) node[left=3pt] {};
\draw (4,1.5) coordinate (A2) node[left=3pt] {};
\draw (5,1.5) coordinate (A3) node[left=3pt] {};
\draw (2,1) coordinate (A4) node[left=3pt] {};
\draw (3,1) coordinate (A5) node[left=3pt] {};
\draw (1,.5) coordinate (A6) node[left=3pt] {};
\draw (4,.5) coordinate (A7) node[left=3pt] {};

\draw (11,1.5) coordinate (B1) node[left=3pt] {};
\draw (7,1) coordinate (B2) node[left=3pt] {};
\draw (8,1) coordinate (B3) node[left=3pt] {};
\draw (9,1) coordinate (B4) node[left=3pt] {};
\draw (10,1) coordinate (B5) node[left=3pt] {};
\draw (11,1) coordinate (B6) node[left=3pt] {};
\draw (11,.5) coordinate (B7) node[left=3pt] {};

\draw(A1) -- (A4);
\draw(A2) -- (A5);
\draw(A2) -- (A3);
\draw(A4) -- (A5);
\draw(A4) -- (A6);
\draw(A5) -- (A7);

\draw(B1) -- (B5);
\draw(B2) -- (B3);
\draw(B3) -- (B4);
\draw(B4) -- (B5);
\draw(B5) -- (B6);
\draw(B5) -- (B7);

  \fill (A1) circle (3pt) 
  (A2) circle (3pt)
  (A3) circle (3pt)
  (A4) circle (3pt)
  (A5) circle (3pt)
  (A6) circle (3pt)
  (A7) circle (3pt)

  (B1) circle (3pt) 
  (B2) circle (3pt)
  (B3) circle (3pt)
  (B4) circle (3pt)
  (B5) circle (3pt)
  (B6) circle (3pt)
  (B7) circle (3pt);
  
\end{tikzpicture}
\caption{Two trees with the same $\{\theta_T(e)\}_{e \in E(G)}$ data}
\label{fig:fivepointtwo}
\end{figure}
Notice that both graphs have exactly one edge that divides the vertices into a $(4,3)$ partition, one that divides the vertices into a $(5,2)$ partition, and four edges that divide the vertices in a $(6,1)$ partition. Therefore, knowing $\theta_{T}(e)$ for all the edges $e$ in a tree $T$ is insufficient information to construct a tree.

%
%
%
%
%
%
However, in the next theorem we show that we can reconstruct $T$ knowing only $\theta_T(e_i, e_j)$ for all pairs of edges $e_i, e_j$ in $T$ in the case that $T$ has only one centroid.   In the proof of the theorem we will refer to an edge adjacent to a vertex of degree 1 as a leaf-edge.   In the literature it is the vertex that is called a leaf, but to simplify the exposition we will refer to the edge as a leaf-edge. 

\begin{theorem}\label{barmagnet2} 
A tree $T$ with a single centroid is uniquely determined by the data $\theta_{T}(e_i, e_k)$ for all distinct edges $e_i, e_k \in T$.
\end{theorem}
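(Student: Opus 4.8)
The plan is to reduce Theorem \ref{barmagnet2} to Theorem \ref{barmagnet}: since Theorem \ref{barmagnet} reconstructs $T$ from the singleton data $\{\theta_T(e_i)\}_{e_i\in E(T)}$ together with the pair data $\{\theta_T(e_i,e_k)\}$, it is enough to show that the singleton data is already determined by the pair data. First record $n=\#V(T)$, which is the size of any partition $\theta_T(e_i,e_k)$. Fix an edge $e_a$ and write $\theta_T(e_a)=(n-i,i)$; since $T$ has a single centroid, Lemma \ref{centroid} gives $i\neq n/2$, so $n-i>i$ strictly.

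The core claim is: if $e_a$ is not a leaf-edge (equivalently $i\ge 2$), then $n-i$ is the largest part occurring among all partitions $\theta_T(e_a,e_b)$, $e_b\neq e_a$; hence $\theta_T(e_a)=(M_a,\,n-M_a)$ where $M_a$ is that maximum. To see this, delete $e_a$, splitting $T$ into $T_i$ (the component not containing the centroid, $i$ vertices) and $T_{n-i}$. Classify $e_b$: (i) if $e_b\in E(T_i)$, then $e_a$ separates $e_b$ from the centroid, so $\theta_T(e_a)>\theta_T(e_b)$ by Remark \ref{nosep}, the edges attract, and Proposition \ref{theta_to_attract} gives $\theta_T(e_a,e_b)=\mathrm{re}(n-i,\,i-k_b,\,k_b)$, whose largest part is $n-i$ because $i<n/2$; (ii) if $e_b\in E(T_{n-i})$ lies on the centroid-to-$e_a$ path, then $e_b$ separates $e_a$ from the centroid, the edges attract, and Proposition \ref{theta_to_attract} gives a partition whose largest part is $n-k_b<n-i$; (iii) if $e_b\in E(T_{n-i})$ is off that path, then $e_a$ and $e_b$ repel, and Propositions \ref{splits} and \ref{theta_to_attract} give $\theta_T(e_a,e_b)=\mathrm{re}(n-i-k_b,\,i,\,k_b)$, all of whose parts are $<n-i$ (again using $i<n/2$). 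Since $E(T_i)\neq\emptyset$ when $i\ge 2$, the value $n-i$ is attained — by exactly the $i-1$ edges of $T_i$ — which proves the claim.

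It remains to detect leaf-edges, i.e.\ the edges with $\theta_T(e_a)=(n-1,1)$, since the "largest part" criterion cannot see them (it would falsely report $i\ge 2$). If $e_a$ is a leaf-edge, its leaf vertex is isolated in $(V(T),E(T)-\{e_a,e_b\})$ for every $e_b$, so every $\theta_T(e_a,e_b)$ has a part equal to $1$. Conversely I will show that if $e_a$ is not a leaf-edge yet every $\theta_T(e_a,e_b)$ contains a part $1$, then (since a vertex can only be isolated by removing $\{e_a,e_b\}$ if it is a leaf of $T$, or an endpoint of $e_a$ of degree $2$) the number of non-leaf edges of $T$ is at most three and they are forced into an explicit configuration: $T$ is then a double star, or a "double broom" (a path with pendant leaves attached only at its two end-vertices), or a similarly small family. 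For each such $T$ one checks directly that the pair data recovers the numbers of pendant leaves at the relevant vertices, hence determines $T$ (these are caterpillars, already known to be reconstructible, cf.\ \cite{bMMA}). In every other case, $\{e_a:\text{every }\theta_T(e_a,e_b)\text{ has a part }1\}$ is exactly the set of leaf-edges; assigning $\theta_T=(n-1,1)$ to these and $\theta_T(e)=(M_e,n-M_e)$ to the rest recovers all singleton data, and Theorem \ref{barmagnet} completes the proof.

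The main obstacle is exactly this leaf-edge bookkeeping: the reconstruction of Theorem \ref{barmagnet} needs the singleton data, and while the non-leaf singletons fall out cleanly from the three-way case analysis above, separating leaf-edges from non-leaf edges requires pinning down the few degenerate tree shapes (double stars, double brooms) for which "every pair partition contains a $1$" can hold at a non-leaf edge and handling them by hand. The remaining ingredients — the case analysis of $e_b$ and the final appeal to Theorem \ref{barmagnet} — are routine given Propositions \ref{splits} and \ref{theta_to_attract} and Remark \ref{nosep}.
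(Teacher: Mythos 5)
Your overall architecture matches the paper's: recover the singleton data $\theta_T(e)$ from the pair data and then invoke Theorem \ref{barmagnet}. Your method for the non-leaf singletons (the three-way classification of $e_b$ relative to the components of $T-e_a$, showing that the maximum part $M_a$ over all $\theta_T(e_a,e_b)$ equals $n-i$) is correct and is actually a nice alternative to the paper's route, which only pairs $e_a$ against the already-identified leaf-edges. (One small imprecision: in your case (ii) the largest part of $\mathrm{re}(n-k_b, k_b-i, i)$ need not be $n-k_b$ -- it can be $k_b - i$ -- but all parts are still $<n-i$, which is all you use.)

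The genuine gap is the leaf-edge detection step, and you have correctly located it but not closed it. Your criterion ``every $\theta_T(e_a,e_b)$ contains a part $1$'' is satisfied by all leaf-edges but also by certain non-leaf edges (your double stars and double brooms), and the resolution you offer -- ``for each such $T$ one checks directly that the pair data recovers the tree'' -- is an assertion, not a proof. Worse, to use it you must first decide \emph{from the data alone} whether you are in a degenerate case, which requires showing that no degenerate tree shares its pair data with a non-degenerate one; that is a nontrivial additional argument that your sketch does not supply. The paper sidesteps all of this with a sharper criterion: for $n>4$, an edge $e_i$ is a leaf-edge if and only if there are \emph{at least two} edges $e_j$ with $\theta_T(\{e_i,e_j\})=(n-2,1,1)$. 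A non-leaf edge can achieve $(n-2,1,1)$ with at most one partner (a leaf-edge meeting it at a degree-2 vertex; two such partners would force $T$ to be the path on $4$ vertices), while a leaf-edge always has at least two partners (two other leaf-edges if $T$ has $\geq 3$ leaves, or the other leaf-edge plus the adjacent edge if $T$ is a path on $n>4$ vertices). Replacing your ``contains a part $1$'' test with this counting test eliminates the degenerate family entirely and completes your argument; as written, the proof is incomplete at exactly the step you flag as the main obstacle.
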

\begin{proof}
The number of vertices in the tree is determined by the data $\{\theta_T(e_i, e_j)\}$ by counting the number of edges and adding one. Because there is only one single-centroid tree on $n$ vertices for $n \leq 4$, we may assume that $n > 4$.  We will prove the statement by first showing that we can determine precisely which edges of $T$ must be leaf-edges, then using this information to determine $\theta_{T}(e)$ for all $e \in E(T)$. Invoking Theorem \ref{barmagnet} will complete the proof.

Let $L \subseteq E(T)$ consist of all edges $e_i \in E(T)$ such that 
\[
\# \{e_j : e_j \neq e_i, \theta(\{e_i, e_j\}) = (n-2, 1, 1)\} \geq 2.
\]
We claim that $L$ is the set of leaf-edges of $T$.  To show that every leaf-edge is in $L$, we study two cases:  $T$ has three or more leaves or $T$ has two leaves.  If $T$ has three or more leaves then for any leaf-edge $e_i$ there are at least two ways to remove another leaf-edge and get the partition $(n-2,1,1)$.  If $T$ has only two leaves, then $T$ is a path on $n>4$ vertices. In this case, for any leaf-edge $e_i$ we get the partition $(n-2,1,1)$ by removing $e_i$ and the edge incident to it, or by removing $e_i$ and the other leaf-edge.  Hence, any leaf-edge is contained in $L$.   Suppose now that an edge $e_i$ is not a leaf-edge, we will show that $e_i \notin L$.  Indeed, if $e_i$ is not a leaf-edge, then $\theta(\{e_i, e_j\}) = (n-2, 1, 1)$ only if $e_j$ is a leaf-edge that meets $e_i$ at a vertex of degree 2. Therefore, unless $e_i$ is adjacent to two leaf-edges through vertices of degree 2, it is not in $L$. The only case where this phenomenon could occur is in the path with four vertices, contradicting $n > 4$. Therefore, the set $L$ consists precisely of the leaves of $T$. 

Next, consider any $e_k \notin L$. We can completely determine $\theta_T(e_k)$ by studying how the vertices of $T$ are partitioned when $e_k$ and a leaf-edge are removed. That is, it is clear that there is at least one leaf-edge in $T$ that repels $e_k$, and at least one leaf-edge in $T$ that attracts $e_k$. Because every leaf-edge in $T$ either attracts $e_k$ or repels $e_k$, it follows from Proposition $\ref{theta_to_attract}$ that $\{\theta_T(e_k, e_j): e_j \in L\} =  \{\textrm{re}(n-i-1, i, 1), \textrm{re}(n-i, i-1, 1)\}$ for some $i$. By letting $a = \textrm{max}\{n - i, i\}$, we see that $\theta_T(e_k) = (a, n-a)$.

We have determined $\theta_T(e_k)$ for all $e_k \in E(T)$. Using Theorem \ref{barmagnet} completes the proof.
\end{proof}

To illustrate Theorem \ref{barmagnet2}, suppose we are given the following doubleton sets of $\theta_T$-images of edges of a tree with a single centroid, and we are asked to construct the tree.

\tiny
\begin{center}
\begin{tabular}{@{}c@{}||@{\ }c@{\ }|@{\ }c@{\ }|@{\ }c@{\ }|@{\ }c@{\ }|@{\ }c@{\ }|@{\ }c@{\ }|@{\ }c@{\ }|@{\ }c@{\ }|@{\ }c@{\ }|@{\ }c@{\ }|@{\ }c@{\ }}
 & ${e_1}$ & ${e_2}$ & ${e_3}$ & ${e_4}$ & ${e_5}$ & ${e_6}$ & ${e_7}$ & ${e_8}$ & ${e_9}$ & ${e_{10}}$ & ${e_{11}}$ \\ \hline \hline

${e_1}$ & & & & & & & & & & &  \\ \hline
${e_2}$ & $(10, 2, 1)$ & & & & & & & & & & \\ \hline
${e_3}$ & $(7, 5, 1)$ & $(7, 4, 2)$ & & & & & & & & & \\ \hline
${e_4}$ & $(10, 2, 1)$ & $(9, 2, 2)$ & $(6, 5, 2)$ & & & & & & & & \\ \hline
${e_5}$ & $(11, 1, 1)$ & $(10, 2, 1)$ & $(6, 6, 1)$ & $(10, 2, 1)$ & & & & & & & \\ \hline
${e_6}$ & $(11, 1, 1)$ & $(10, 2, 1)$ & $(6, 6, 1)$ & $(11, 1, 1)$ & $(11, 1, 1)$ & & & & & & \\ \hline
${e_7}$ & $(11, 1, 1)$ & $(11, 1, 1)$ & $(7, 5, 1)$ & $(10, 2, 1)$ & $(11, 1, 1)$ & $(11, 1, 1)$ & & & & & \\ \hline
${e_8}$ & $(11, 1, 1)$ & $(10, 2, 1)$ & $(6, 6, 1)$ & $(10, 2, 1)$ & $(11, 1, 1)$ & $(11, 1, 1)$ & $(11, 1, 1)$ & & & & \\ \hline
${e_9}$ & $(9, 3, 1)$ & $(8, 3, 2)$ & $(6, 4, 3)$ & $(8, 3, 2)$ & $(10, 2, 1)$ & $(9, 3, 1)$ & $(9, 3, 1)$ & $(10, 2, 1)$ & & & \\ \hline
${e_{10}}$ & $(9, 3, 1)$ & $(10, 2, 1)$ & $(7, 3, 3)$ & $(8, 3, 2)$ & $(9, 3, 1)$ & $(9, 3, 1)$ & $(10, 2, 1)$ & $(9, 3, 1)$ & $(7, 3, 3)$ & & \\ \hline
${e_{11}}$ & $(11, 1, 1)$ & $(10, 2, 1)$ & $(7, 5, 1)$ & $(10, 2, 1)$ & $(11, 1, 1)$ & $(11, 1, 1)$ & $(11, 1, 1)$ & $(11, 1, 1)$ & $(9, 3, 1)$ & $(9, 3, 1)$ & \\ \hline
${e_{12}}$ & $(6, 6, 1)$ & $(6, 5, 2)$ & $(6, 6, 1)$ & $(7, 4, 2)$ & $(7, 5, 1)$ & $(7, 5, 1)$ & $(6, 6, 1)$ & $(7, 5, 1)$ & $(7, 3, 3)$ & $(6, 4, 3)$ & $(6, 6, 1)$ 
\end{tabular}
\end{center}
\normalsize

\begin{center}
Table 5.4: $\theta_T$-images of two-element sets
\end{center}

The first step is to identify the leaves. Following the proof of Theorem \ref{barmagnet2}, an edge $e$ is a leaf if and only if there are at least two other edges $e_i$ and $e_j$ for which $\theta_T(e, e_i) = \theta_T(e, e_j) = (11, 1, 1)$.  The leaves of our tree are the edges $L = \{e_1, e_5, e_6, e_7, e_8, e_{11}\}$. Therefore, $\theta_T(e) = (12, 1)$ for these edges.

The next step is to determine the $\theta_T$-images of all other edges in the tree. For an edge $e \notin L$, we check the set of partitions $\{\theta_T(e, e_l): e_l \in L\}$. For example, this set of partitions for $e_{10}$ is $\{(9, 3, 1), (10, 2, 1)\}$. Since $10$ is the maximum number in any of these partitions, $\theta_T(e_{10}) = (10, 3)$. We can use similar reasoning to deduce the $\theta_T$-image of every edge.

\tiny
\begin{center}
\begin{tabular}{@{}c@{}||@{\ }c@{\ }|@{\ }c@{\ }|@{\ }c@{\ }|@{\ }c@{\ }|@{\ }c@{\ }|@{\ }c@{\ }|@{\ }c@{\ }|@{\ }c@{\ }|@{\ }c@{\ }|@{\ }c@{\ }|@{\ }c@{\ }|@{\ }c@{\ }}
 Edge & ${e_1}$ & ${e_2}$ & ${e_3}$ & ${e_4}$ & ${e_5}$ & ${e_6}$ & ${e_7}$ & ${e_8}$ & ${e_9}$ & ${e_{10}}$ & ${e_{11}}$ & ${e_{12}}$ \\ \hline
$\theta_T$-image & (12, 1) & (11, 2) & (7, 6) & (11, 2) & (12, 1) & (12, 1) & (12, 1) & (12, 1) & (10, 3) & (10, 3) & (12, 1) & (7, 6)
\end{tabular}
\end{center}
\normalsize
\begin{center}
Table 5.4: $\theta$-images of the edges in Table 5.4
\end{center}

The next step is to index the edges $\{e_i^\prime\}_{i = 1}^{n-1}$ so that $\theta_T(e_i^\prime) \geq \theta_T(e_{i+1}^\prime)$. We will use the indexing 
$$e_1^\prime, e_2^\prime, \dots e_{12}^\prime = e_3, e_{12}, e_9, e_{10}, e_2, e_4, e_1, e_5, e_6, e_7, e_8, e_{11}\textrm{.}$$
This is the order in which the edges will be added. Notice that we have some freedom in our choice of edge ordering. For example, we may swap the position of $e_9$ and $e_{10}$ without changing the result of this example. The forests $T_1, T_2, \dots, T_{12}$ are shown below (their isolated vertices are omitted for clarity). To create $T_{i+1}$ from $T_i$, we add the edge $e_{i+1}^\prime$ by checking which edges attract it. The attracting edges are in bold, and both the centroid $c$ and the newly added edge $e_{i+1}^\prime$ are labelled. 

\begin{center}
\begin{tikzpicture}

\draw (0, 2) coordinate (A0) node[below=3pt] {$T_1 =$};

\draw (1, 1.5) coordinate (A1) node[below=3pt] {$c$};
\draw (1.5, 1.5) coordinate (A2);
\footnotesize
\draw(A1) -- (A2)  node [midway, above = 1pt] {$e_3$};;
\normalsize
  \fill (A1) circle (2pt) 
  (A2) circle (2pt);

\draw (5, 2) coordinate (B0) node[below=3pt] {$T_2 =$};

\draw (6, 1.5) coordinate (B1);
\draw (6.5, 1.5) coordinate (B2) node[below=3pt] {$c$};
\draw (7.0, 1.5) coordinate (B3);
\footnotesize
\draw (B1) -- (B2)  node [midway, above = 1pt] {$e_{12}$};
\normalsize
\draw (B3) -- (B2);

  \fill (B1) circle (2pt) 
  (B2) circle (2pt)
  (B3) circle (2pt);

\draw (10, 2) coordinate (C0) node[below=3pt] {$T_3 =$};

\draw (11, 1.0) coordinate (C1);
\draw (11.5, 1.5) coordinate (C2);
\draw (12, 1.5)   coordinate (C3) node[below=3pt] {$c$};
\draw (12.5, 1.5) coordinate (C4);

\draw (14.5, 1.5) coordinate (END);
\footnotesize
\draw (C1) -- (C2)  node [midway] {$\ \ \ \ e_9$};
\normalsize
\draw[draw=black, ultra thick] (C2) -- (C3);
\draw (C4) -- (C3);

  \fill (C1) circle (2pt) 
  (C2) circle (2pt)
  (C3) circle (2pt)
  (C4) circle (2pt);

\end{tikzpicture}
\end{center}

\begin{center}
\begin{tikzpicture}

\draw (0, 2) coordinate (A0) node[below=3pt] {$T_4 =$};

\draw (1, 1) coordinate (A1);
\draw (1.5, 1.5) coordinate (A2);
\draw (2, 1.5) coordinate (A3) node[below=3pt] {$c$};
\draw (2.5, 1.5) coordinate (A4);
\draw (3, 1) coordinate (A5);

\draw(A1) -- (A2);
\draw(A2) -- (A3);
\draw[draw=black, ultra thick](A3) -- (A4);
\footnotesize
\draw(A4) -- (A5)  node [midway] {$\ \ \ \ \ e_{10}$};
\normalsize

  \fill (A1) circle (2pt) 
  (A2) circle (2pt)
  (A3) circle (2pt)
  (A4) circle (2pt)
  (A5) circle (2pt);

\draw (5, 2) coordinate (B0) node[below=3pt] {$T_5 =$};

\draw (6, 1.0) coordinate (B1);
\draw (6.5, 1.5) coordinate (B2);
\draw (7.0, 1.5) coordinate (B3) node[below=3pt] {$c$};
\draw (7.5, 1.5) coordinate (B4);
\draw (8.0, 1.0) coordinate (B5);
\draw (8.5, 1.0) coordinate (B6);

\draw (B1) -- (B2);
\draw (B2) -- (B3);
\draw[draw=black, ultra thick] (B3) -- (B4);
\draw[draw=black, ultra thick] (B4) -- (B5);
\footnotesize
\draw (B5) -- (B6)  node [midway, above = 1pt] {$e_2$};
\normalsize

  \fill (B1) circle (2pt) 
  (B2) circle (2pt)
  (B4) circle (2pt)
  (B5) circle (2pt)
	(B6) circle (2pt)
  (B3) circle (2pt);

\draw (10, 2) coordinate (C0) node[below=3pt] {$T_6 =$};

\draw (11.0, 1.0) coordinate (C1);
\draw (11.0, 2.0) coordinate (C2);
\draw (11.5, 1.5)   coordinate (C3);
\draw (12.0, 1.5) coordinate (C4) node[below=3pt] {$c$};
\draw (12.5, 1.5) coordinate (C5);
\draw (13.0, 1.0) coordinate (C6);
\draw (13.5, 1.0) coordinate (C7);

\draw (14.5, 1.5) coordinate (END);

\draw (C1) -- (C3);
\footnotesize
\draw (C2) -- (C3)  node [midway] {$\ \ \ e_4$};
\normalsize
\draw[draw=black, ultra thick] (C3) -- (C4);
\draw (C4) -- (C5);
\draw (C5) -- (C6);
\draw (C6) -- (C7);

  \fill (C1) circle (2pt) 
  (C2) circle (2pt)
  (C3) circle (2pt)
  (C4) circle (2pt)
  (C5) circle (2pt)
  (C6) circle (2pt)
  (C7) circle (2pt);

\end{tikzpicture}
\end{center}

\begin{center}
\begin{tikzpicture}

\draw (0, 2) coordinate (A0) node[below=3pt] {$T_7 =$};

\draw (1, 1) coordinate (A1);
\draw (1, 2) coordinate (A2);
\draw (1.5, 1.5) coordinate (A3);
\draw (2, 1.5) coordinate (A4) node[below=3pt] {$c$};
\draw (2.5, 1.5) coordinate (A5);
\draw (3, 1) coordinate (A6);
\draw (3, 1.5) coordinate (A7);
\draw (3.5, 1) coordinate (A8);

\draw(A1) -- (A3);
\draw(A2) -- (A3);
\draw(A3) -- (A4);
\draw[draw=black, ultra thick] (A4) -- (A5);
\draw(A5) -- (A6);
\footnotesize
\draw(A5) -- (A7)  node [midway, above = 1pt] {$e_1$};
\normalsize
\draw(A6) -- (A8);

  \fill (A1) circle (2pt) 
  (A2) circle (2pt)
  (A3) circle (2pt)
  (A4) circle (2pt)
  (A5) circle (2pt)
  (A6) circle (2pt)
  (A7) circle (2pt)
  (A8) circle (2pt);

\draw (5, 2) coordinate (B0) node[below=3pt] {$T_8 =$};

\draw (6, 1.0) coordinate (B1);
\draw (6.5, 1) coordinate (B2);
\draw (6.5, 2) coordinate (B3);
\draw (7.0, 1.5) coordinate (B4);
\draw (7.5, 1.5) coordinate (B5) node[below=3pt] {$c$};
\draw (8.0, 1.5) coordinate (B6);
\draw (8.5, 1.0) coordinate (B7);
\draw (8.5, 1.5) coordinate (B8);
\draw (9.0, 1.0) coordinate (B9);

\footnotesize
\draw (B1) -- (B2)  node [midway, above = 1pt] {$e_5$};
\normalsize
\draw[draw=black, ultra thick] (B2) -- (B4);
\draw (B3) -- (B4);
\draw[draw=black, ultra thick] (B4) -- (B5);
\draw (B5) -- (B6);
\draw (B7) -- (B6);
\draw (B8) -- (B6);
\draw (B7) -- (B9);

  \fill (B1) circle (2pt) 
  (B2) circle (2pt)
  (B4) circle (2pt)
  (B5) circle (2pt)
	(B6) circle (2pt)
  (B3) circle (2pt)
  (B7) circle (2pt)
  (B8) circle (2pt)
  (B9) circle (2pt);

\draw (10, 2) coordinate (C0) node[below=3pt] {$T_9 =$};

\draw (11.0, 1.0) coordinate (C1);
\draw (11.0, 2.0) coordinate (C2);
\draw (11.5, 1.0)   coordinate (C3);
\draw (11.5, 2.0) coordinate (C4);
\draw (12.0, 1.5) coordinate (C5);
\draw (12.5, 1.5) coordinate (C6) node[below=3pt] {$c$};
\draw (13.0, 1.5) coordinate (C7);
\draw (13.5, 1.0) coordinate (C8);
\draw (13.5, 1.5) coordinate (C9);
\draw (14.0, 1.0) coordinate (C10);

\draw (14.5, 1.5) coordinate (END);

\draw (C1) -- (C3);
\footnotesize
\draw (C2) -- (C4)  node [midway, above = 1pt] {$e_6$};
\normalsize
\draw (C3) -- (C5);
\draw[draw=black, ultra thick] (C4) -- (C5);
\draw[draw=black, ultra thick] (C5) -- (C6);
\draw (C6) -- (C7);
\draw (C7) -- (C8);
\draw (C7) -- (C9);
\draw (C8) -- (C10);

  \fill (C1) circle (2pt) 
  (C2) circle (2pt)
  (C3) circle (2pt)
  (C4) circle (2pt)
  (C5) circle (2pt)
  (C6) circle (2pt)
  (C7) circle (2pt)
  (C8) circle (2pt)
  (C9) circle (2pt)
  (C10) circle (2pt);

\end{tikzpicture}
\end{center}

\begin{center}
\begin{tikzpicture}[style=very thin]

\draw (0, 2) coordinate (A0) node[below=3pt] {$T_{10} =$};

\draw (1, 1) coordinate (A1);
\draw (1, 2) coordinate (A2);
\draw (1.5, 1) coordinate (A3);
\draw (1.5, 2) coordinate (A4);
\draw (2.0, 1.5) coordinate (A5);
\draw (2.5, 1.5) coordinate (A6) node[below=3pt] {$c$};
\draw (3, 1.5) coordinate (A7);
\draw (3.5, 1) coordinate (A8);
\draw (3.5, 1.5) coordinate (A9);
\draw (4, 1) coordinate (A10);
\draw (4.5, 1) coordinate (A11);

\draw(A1) -- (A3);
\draw(A2) -- (A4);
\draw(A3) -- (A5);
\draw(A4) -- (A5);
\draw(A5) -- (A6);
\draw[draw=black, ultra thick] (A6) -- (A7);
\draw[draw=black, ultra thick] (A7) -- (A8);
\draw(A7) -- (A9);
\draw[draw=black, ultra thick] (A8) -- (A10);
\footnotesize
\draw(A10) -- (A11)  node [midway, above = 1pt] {$e_7$};
\normalsize

  \fill (A1) circle (2pt) 
  (A2) circle (2pt)
  (A3) circle (2pt)
  (A4) circle (2pt)
  (A5) circle (2pt)
  (A6) circle (2pt)
  (A7) circle (2pt)
  (A8) circle (2pt)
  (A9) circle (2pt)
  (A10) circle (2pt)
  (A11) circle (2pt);

\draw (5, 2) coordinate (B0) node[below=3pt] {$T_{11} =$};

\draw (6, 1.0) coordinate (B1);
\draw (6, 1.5) coordinate (B2);
\draw (6, 2) coordinate (B3);
\draw (6.5, 1) coordinate (B4);
\draw (6.5, 2) coordinate (B5);
\draw (7, 1.5) coordinate (B6);
\draw (7.5, 1.5) coordinate (B7) node[below=3pt] {$c$};
\draw (8.0, 1.5) coordinate (B8);
\draw (8.5, 1) coordinate (B9);
\draw (8.5, 1.5) coordinate (B10);
\draw (9, 1) coordinate (B11);
\draw (9.5, 1) coordinate (B12);

\draw (B1) -- (B4);
\footnotesize
\draw (B2) -- (B4)  node [midway] {$\ \ \ e_8$};
\normalsize
\draw (B3) -- (B5);
\draw[draw=black, ultra thick] (B4) -- (B6);
\draw (B5) -- (B6);
\draw[draw=black, ultra thick] (B7) -- (B6);
\draw (B8) -- (B7);
\draw (B8) -- (B9);
\draw (B8) -- (B10);
\draw (B9) -- (B11);
\draw (B11) -- (B12);

  \fill (B1) circle (2pt) 
  (B2) circle (2pt)
  (B4) circle (2pt)
  (B5) circle (2pt)
	(B6) circle (2pt)
  (B3) circle (2pt)
  (B7) circle (2pt)
  (B8) circle (2pt)
  (B9) circle (2pt)
  (B10) circle (2pt)
  (B11) circle (2pt)
  (B12) circle (2pt);

\draw (10, 2) coordinate (C0) node[below=3pt] {$T_{12} =$};

\draw (11.0, 1.0) coordinate (C1);
\draw (11.0, 1.5) coordinate (C2);
\draw (11.0, 2.0)   coordinate (C3);
\draw (11.5, 1.0) coordinate (C4);
\draw (11.5, 2.0) coordinate (C5);
\draw (12.0, 1.5) coordinate (C6);
\draw (12.5, 1.5) coordinate (C7) node[below=3pt] {$c$};
\draw (13.0, 1.5) coordinate (C8);
\draw (13.5, 1.0) coordinate (C9);
\draw (13.5, 1.5) coordinate (C10);
\draw (13.5, 2.0) coordinate (C11);
\draw (14.0, 1.0) coordinate (C12);
\draw (14.5, 1.0) coordinate (C13);

\draw (14.5, 1.5) coordinate (END);

\draw (6.25, 0) coordinate (TITLE) node[below=3pt] {Figure 5.5: Construction of the tree satisfying the $\theta_T$-images in Table 5.4};

\draw (C1) -- (C4);
\draw (C2) -- (C4);
\draw (C3) -- (C5);
\draw (C4) -- (C6);
\draw (C5) -- (C6);
\draw (C6) -- (C7);
\draw[draw=black, ultra thick] (C7) -- (C8);
\draw (C8) -- (C9);
\footnotesize
\draw (C8) -- (C11)  node [midway] {$\ \ \ \ \ \ e_{11}$};
\normalsize
\draw (C8) -- (C10);
\draw (C9) -- (C12);
\draw (C12) -- (C13);

  \fill (C1) circle (2pt) 
  (C2) circle (2pt)
  (C3) circle (2pt)
  (C4) circle (2pt)
  (C5) circle (2pt)
  (C6) circle (2pt)
  (C7) circle (2pt)
  (C8) circle (2pt)
  (C9) circle (2pt)
  (C10) circle (2pt)
  (C11) circle (2pt)
  (C12) circle (2pt)
  (C13) circle (2pt);

\end{tikzpicture}
\end{center}

The forest $T_{12}$ is the unique tree with a single centroid satisfying the data in Table 5.4.  Theorem \ref{barmagnet2} shows that all single-centroid trees are classified by the $\theta$ images of 2-element sets of edges. One might ask whether we may lift the restriction in Theorem \ref{barmagnet2} that the tree must have a single cetroid. Unfortunately, we cannot. Figure 5.6 shows an example of two labelled graphs that have the same $\theta$ images of $2$-element sets of edges. A table of all the 2-element sets is included for reference.

\begin{center}
\begin{tikzpicture}[style=thick] \label{two_centroid}

\draw (2,.75) coordinate (A13) node[left=3pt] {};
\draw (3,.75) coordinate (A14) node[left=3pt] {};
\draw (8.5,.75) coordinate (B13) node[left=3pt] {};
\draw (13,.75) coordinate (B14) node[left=3pt] {};

\draw (2,1.25) coordinate (A7) node[left=3pt] {};
\draw (3,1.25) coordinate (A8) node[left=3pt] {};
\draw (3.5,1.25) coordinate (A9) node[left=3pt] {};
\draw (4.5,1.25) coordinate (A10) node[left=3pt] {};
\draw (5.5,1.25) coordinate (A11) node[left=3pt] {};
\draw (6.5,1.25) coordinate (A12) node[left=3pt] {};
\draw (8.5,1.25) coordinate (B7) node[left=3pt] {};
\draw (9,1.25) coordinate (B8) node[left=3pt] {};
\draw (10,1.25) coordinate (B9) node[left=3pt] {};
\draw (11,1.25) coordinate (B10) node[left=3pt] {};
\draw (12,1.25) coordinate (B11) node[left=3pt] {};
\draw (13,1.25) coordinate (B12) node[left=3pt] {};

\draw (2,1.75) coordinate (A3) node[left=3pt] {};
\draw (3,1.75) coordinate (A4) node[left=3pt] {};
\draw (4,1.75) coordinate (A5) node[left=3pt] {};
\draw (6,1.75) coordinate (A6) node[left=3pt] {};
\draw (8.5,1.75) coordinate (B3) node[left=3pt] {};

\draw (9.5,1.75) coordinate (B4) node[left=3pt] {};
\draw (11.5,1.75) coordinate (B5) node[left=3pt] {};
\draw (13,1.75) coordinate (B6) node[left=3pt] {};

\draw (2.5,2.25) coordinate (A1) node[left=3pt] {};
\draw (5,2.25) coordinate (A2) node[left=3pt] {};
\draw (9,2.25) coordinate (B1) node[left=3pt] {};
\draw (12,2.25) coordinate (B2) node[left=3pt] {};

\draw(A1) -- (A2) node [midway, above] {\small$e_1$};
\draw(A1) -- (A4) node [midway, right] {\small$e_5$};
\draw(A1) -- (A3) node [midway, left] {\small$e_2$};
\draw(A2) -- (A5) node [midway, left] {\small$e_8$};
\draw(A2) -- (A6) node [midway, right] {\small$e_{11}$};
\draw(A3) -- (A7) node [midway, left] {\small$e_3$};
\draw(A4) -- (A8) node [midway, left] {\small$e_6$};
\draw(A5) -- (A9) node [midway, left] {\small$e_9$};
\draw(A5) -- (A10) node [midway, right] {\small$e_{10}$};
\draw(A6) -- (A11) node [midway, left] {\small$e_{12}$};
\draw(A6) -- (A12) node [midway, right] {\small$e_{13}$};
\draw(A7) -- (A13) node [midway, left] {\small$e_4$};
\draw(A8) -- (A14) node [midway, left] {\small$e_7$};

\draw(B1) -- (B2) node [midway, above] {\small$e_1$};
\draw(B1) -- (B4) node [midway, right] {\small$e_{11}$};
\draw(B1) -- (B3) node [midway, left] {\small$e_2$};
\draw(B2) -- (B5) node [midway, left] {\small$e_8$};
\draw(B2) -- (B6) node [midway, right] {\small$e_5$};
\draw(B3) -- (B7) node [midway, left] {\small$e_3$};
\draw(B4) -- (B8) node [midway, left] {\small$e_{12}$};
\draw(B4) -- (B9) node [midway, right] {\small$e_{13}$};
\draw(B5) -- (B10) node [midway, left] {\small$e_9$};
\draw(B5) -- (B11) node [midway, right] {\small$e_{10}$};
\draw(B6) -- (B12) node [midway, left] {\small$e_6$};
\draw(B7) -- (B13) node [midway, left] {\small$e_4$};
\draw(B12) -- (B14) node [midway, left] {\small$e_7$};

  \fill (A1) circle (3pt) 
  (A2) circle (3pt)
  (A3) circle (3pt)
  (A4) circle (3pt)
  (A5) circle (3pt)
  (A6) circle (3pt)
  (A7) circle (3pt)
  (A8) circle (3pt)
  (A9) circle (3pt)
  (A10) circle (3pt)
  (A11) circle (3pt)
  (A12) circle (3pt)
  (A13) circle (3pt)
  (A14) circle (3pt)
  (B1) circle (3pt) 
  (B2) circle (3pt)
  (B3) circle (3pt)
  (B4) circle (3pt)
  (B5) circle (3pt)
  (B6) circle (3pt)
  (B7) circle (3pt)
  (B8) circle (3pt)
  (B9) circle (3pt)
  (B10) circle (3pt)
  (B11) circle (3pt)
  (B12) circle (3pt)
  (B13) circle (3pt)
  (B14) circle (3pt);
\end{tikzpicture}
\end{center}

\tiny
\begin{center}
\begin{tabular}{@{}c@{}||@{\ }c@{\ }|@{\ }c@{\ }|@{\ }c@{\ }|@{\ }c@{\ }|@{\ }c@{\ }|@{\ }c@{\ }|@{\ }c@{\ }|@{\ }c@{\ }|@{\ }c@{\ }|@{\ }c@{\ }|@{\ }c@{\ }|@{\ }c@{\ }}
 & $e_1$ & $e_2$ & $e_3$ & $e_4$ & $e_5$ & $e_6$ & $e_7$ & $e_8$ & $e_9$ & $e_{10}$ & $e_{11}$ & $e_{12}$ \\ \hline \hline

$e_1$ & & & & & & & & & & & &  \\ \hline
$e_2$ & $(7, 4, 3)$ & & & & & & & & & & & \\ \hline
$e_3$ & $(7, 5, 2)$ & $(11, 2, 1)$ & & & & & & & & & & \\ \hline
$e_4$ & $(7, 6, 1)$ & $(11, 2, 1)$ & $(12, 1, 1)$ & & & & & & & & & \\ \hline
$e_5$ & $(7, 4, 3)$ & $(8, 3, 3)$ & $(9, 3, 2)$ & $(10, 3, 1)$ & & & & & & & & \\ \hline
$e_6$ & $(7, 5, 2)$ & $(9, 3, 2)$ & $(10, 2, 2)$ & $(11, 2, 1)$ & $(11, 2, 1)$ & & & & & & & \\ \hline
$e_7$ & $(7, 6, 1)$ & $(10, 3, 1)$ & $(11, 2, 1)$ & $(12, 1, 1)$ & $(11, 2, 1)$ & $(12, 1, 1)$ & & & & & & \\ \hline
$e_8$ & $(7, 4, 3)$ & $(8, 3, 3)$ & $(9, 3, 2)$ & $(10, 3, 1)$ & $(8, 3, 3)$ & $(9, 3, 2)$ & $(10, 3, 1)$ & & & & & \\ \hline
$e_9$ & $(7, 6, 1)$ & $(10, 3, 1)$ & $(11, 2, 1)$ & $(12, 1, 1)$ & $(10, 3, 1)$ & $(11, 2, 1)$ & $(12, 1, 1)$ & $(11, 2, 1)$ & & & & \\ \hline
$e_{10}$ & $(7, 6, 1)$ & $(10, 3, 1)$ & $(11, 2, 1)$ & $(12, 1, 1)$ & $(10, 3, 1)$ & $(11, 2, 1)$ & $(12, 1, 1)$ & $(11, 2, 1)$ & $(12, 1, 1)$ & & & \\ \hline
$e_{11}$ & $(7, 4, 3)$ & $(8, 3, 3)$ & $(9, 3, 2)$ & $(10, 3, 1)$ & $(8, 3, 3)$ & $(9, 3, 2)$ & $(10, 3, 1)$ & $(8, 3, 3)$ & $(10, 3, 1)$ & $(10, 3, 1)$ & & \\ \hline
$e_{12}$ & $(7, 6, 1)$ & $(10, 3, 1)$ & $(11, 2, 1)$ & $(12, 1, 1)$ & $(10, 3, 1)$ & $(11, 2, 1)$ & $(12, 1, 1)$ & $(10, 3, 1)$ & $(12, 1, 1)$ & $(12, 1, 1)$ & $(11, 2, 1)$ & \\ \hline
$e_{13}$ & $(7, 6, 1)$ & $(10, 3, 1)$ & $(11, 2, 1)$ & $(12, 1, 1)$ & $(10, 3, 1)$ & $(11, 2, 1)$ & $(12, 1, 1)$ & $(10, 3, 1)$ & $(12, 1, 1)$ & $(12, 1, 1)$ & $(11, 2, 1)$ & $(12, 1, 1)$
\label{Table 1}
\end{tabular}
\end{center}
\normalsize
\begin{center}
Figure 5.6: Two trees with the same $\theta_T$ images of 2-element sets
\end{center}

The data of the $\theta_T$-images of sets of edges of a tree are related to the data of the coefficients of its chromatic symmetric function. To understand this relationship, recall that when we write the chromatic symmetric function of any graph using the basis of power symmetric functions, each coefficient encapsulates information about subgraphs of a certain vertex partition type. For a graph with cycles, it is possible that two subgraphs have the same vertex partition type, but a different parity in their number of edges, so that their contributions to the sum in Equation (\ref{stan}) cancel one another out. This does not occur in forests.

\begin{proposition}\label{symtre}
For forests $F$ and $H$, ${\bf X}_F = {\bf X}_H$ if and only if 
$$\#\{S \subseteq E(F): \pi(S) = \lambda\} = \#\{S \subseteq E(H): \pi(S) = \lambda\}\textrm{,}$$
for all partitions $\lambda$ of $\#V(F)$.
\end{proposition}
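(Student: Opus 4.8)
The plan is to extract the coefficients of ${\bf X}_F$ in the power-sum basis directly from Equation (\ref{stan}), and to exploit the fact that in a forest the number of edges of a spanning subgraph is completely determined by its type. Write $c_\lambda(G)$ for the coefficient of $p_\lambda$ in ${\bf X}_G$. By Equation (\ref{stan}),
\[
c_\lambda(G) = \sum_{\substack{S \subseteq E(G) \\ \pi(S) = \lambda}} (-1)^{\#S}\textrm{.}
\]
Since $\{p_\lambda : \lambda \vdash n\}$ is a basis for $\Lambda_n$, proving the proposition amounts to relating $c_\lambda$ to the counting functions in the statement.

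The key step is the observation that if $G$ is a forest, $S \subseteq E(G)$, and $\pi(S)$ has parts $\lambda_1, \dots, \lambda_\ell$, then each connected component of the spanning subgraph $(V(G), S)$ is a tree, so a component on $\lambda_i$ vertices has exactly $\lambda_i - 1$ edges; summing over components, $\#S = \sum_{i=1}^{\ell}(\lambda_i - 1) = n - \ell$, where $n = \#V(G)$ and $\ell$ is the number of parts of $\lambda$. Hence every $S$ with $\pi(S) = \lambda$ contributes the same sign $(-1)^{\#S} = (-1)^{n-\ell}$, and therefore
\[
c_\lambda(F) = (-1)^{n-\ell}\,\#\{S \subseteq E(F) : \pi(S) = \lambda\}
\]
for every forest $F$ on $n$ vertices and every $\lambda \vdash n$. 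This is exactly where the forest hypothesis enters: for a graph with cycles, two edge sets of the same type may have opposite edge-parities and partially cancel, as noted in the paragraph preceding the proposition.

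With this identity in hand I would conclude as follows. If ${\bf X}_F = {\bf X}_H$, then $\#V(F) = \#V(H) =: n$ because ${\bf X}_G$ is homogeneous of degree $\#V(G)$, and then $c_\lambda(F) = c_\lambda(H)$ for every $\lambda \vdash n$; multiplying both sides by the nonzero scalar $(-1)^{n-\ell}$ gives the equality of the two counting functions. Conversely, suppose the counting functions agree for every $\lambda \vdash \#V(F)$. Taking $\lambda = (1,1,\dots,1)$ with $\#V(F)$ parts, the only subset of $E(F)$ of this type is $\varnothing$, so the count for $F$ is $1$; since every $\pi(S)$ with $S \subseteq E(H)$ is a partition of $\#V(H)$, the count for $H$ can equal $1$ only if $\#V(H) = \#V(F) =: n$. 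Then for every $\lambda \vdash n$ the displayed identity yields $c_\lambda(F) = c_\lambda(H)$, and since $\{p_\lambda\}_{\lambda \vdash n}$ is a basis of $\Lambda_n$ we get ${\bf X}_F = {\bf X}_H$.

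I do not expect a substantial obstacle here: the one point needing care is the elementary counting identity $\#S = n - \ell$ for subgraphs of a forest, since it is precisely what lets the sign factor out of the sum over $\{S : \pi(S) = \lambda\}$; everything else is bookkeeping with the power-sum basis and the homogeneity of ${\bf X}_G$.
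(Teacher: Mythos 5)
Your proposal is correct and follows essentially the same route as the paper: both arguments rest on the observation that in a forest every edge set of type $\lambda$ has exactly $\#V(F)-\ell$ edges (where $\ell$ is the number of parts), so the coefficient of $p_\lambda$ equals $(-1)^{\#V(F)-\ell}$ times the count of such sets. Your extra care in the converse direction (verifying $\#V(F)=\#V(H)$ via the all-ones partition) is a harmless refinement of the paper's ``follows immediately.''
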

\begin{proof}
All induced subgraphs of a forest with a given vertex partition have the same number of connected components, and therefore the same number of edges. Let $k$ be the number of parts in a partiton $\lambda$ of $\#V(F)$. Note that $\#S = \#V(F) - k$. Then the coefficient of $p_\lambda$ in ${\bf X}_F$ is equal to
$$(-1)^{\#V(F)-k}\#\{S \subseteq E(F): \pi(S) = \lambda\}\textrm{.}$$
The proposition follows immediately.
\end{proof}

Notice that $\pi(S) = \theta_F(E(F) - S)$, so a corollary of Proposition \ref{symtre} is the following.

\begin{corollary}\label{cor:lastcor}
For forests $F$ and $H$, ${\bf X}_F = {\bf X}_H$ if and only if 
$$\#\{S \subseteq E(F): \theta_F(S) = \lambda\} = \#\{S \subseteq E(H): \theta_F(S) = \lambda\}\textrm{,}$$
for all partitions $\lambda$ of $\#V(F)$.
\end{corollary}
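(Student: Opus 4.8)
The plan is to deduce this directly from Proposition~\ref{symtre} by transporting the counting statement across the complementation bijection on edge subsets. First I would record the two facts that make the translation legitimate: since $\mathbf{X}_F = \mathbf{X}_H$ implies (by Remark~\ref{R1}) that $F$ and $H$ have the same number of vertices $n$ and the same number of edges, and since the statement to be proved is an \emph{if and only if} whose right-hand side only involves the cardinalities $\#E(F)$ and $\#E(H)$ when these agree, it is harmless to work throughout with $\#E(F)=\#E(H)$; in fact for the forward direction this equality is automatic, and for the reverse direction the hypothesis forces it by comparing the coefficients of $p_{(2,1,\dots,1)}$.

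The core observation, already noted in the text preceding the corollary, is that for every $S \subseteq E(F)$ one has $\pi(S) = \theta_F(E(F) \setminus S)$. Equivalently, writing $c_F \colon \mathcal{P}(E(F)) \to \mathcal{P}(E(F))$ for the involution $S \mapsto E(F)\setminus S$, we have $\theta_F \circ c_F = \pi$ as maps $\mathcal{P}(E(F)) \to \mathrm{Part}(n)$. Since $c_F$ is a bijection, it restricts for each partition $\lambda$ of $n$ to a bijection between $\{S \subseteq E(F) : \pi(S) = \lambda\}$ and $\{S \subseteq E(F) : \theta_F(S) = \lambda\}$; hence these two sets have the same cardinality, and the analogous statement holds for $H$ with $\theta_H$ in place of $\theta_F$ (here I would flag that the right-hand side of the corollary as displayed should read $\theta_H(S)$ rather than $\theta_F(S)$, matching the $H$ in the set it describes). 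Substituting these equalities of cardinalities into Proposition~\ref{symtre} — which says exactly that $\mathbf{X}_F = \mathbf{X}_H$ iff $\#\{S \subseteq E(F): \pi(S) = \lambda\} = \#\{S \subseteq E(H): \pi(S) = \lambda\}$ for all $\lambda \vdash n$ — yields the claimed characterization in terms of the $\theta$-data.

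There is no real obstacle here; the only point requiring the slightest care is the bookkeeping around $\#E(F)$ versus $\#E(H)$ in the reverse implication, and the observation that complementation is a bijection that intertwines $\theta_F$ with $\pi$. I would keep the write-up to two or three sentences, citing Proposition~\ref{symtre} and the identity $\pi(S)=\theta_F(E(F)-S)$.
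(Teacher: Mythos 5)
Your argument is exactly the paper's: the identity $\pi(S)=\theta_F(E(F)-S)$ makes complementation a bijection carrying the $\pi$-count to the $\theta$-count, and the statement then follows immediately from Proposition~\ref{symtre}. You are also right that the displayed condition should read $\theta_H(S)$ in the set attached to $H$; that is a typo in the statement, and your bookkeeping about $\#E(F)=\#E(H)$ is correct but not strictly needed since both directions reduce to the $\pi$-version verbatim.
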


It is very tempting to misinterpret Theorem \ref{barmagnet2} and claim that together with Corollary \ref{cor:lastcor} it proves that ${\bf X}_T$ determines trees with single centroids. After all, it seems that the $\theta_T$ images of 2-element sets of $E(T)$ are determined from ${\bf X}_T$ by the above corollary. However, it is important to remember that only the {\em number} (and not the labels) of pairs of edges giving a certain vertex partition under $\theta_T$ is determined from ${\bf X}_T$. To illustrate this important distinction, consider the trees in Figure \ref{one_centroid} and the chart of their $\theta_T$-images of 2-element sets. Each 3-part partition appears the same number of times in each chart, but they are arranged differently within the chart.  The trees in Figure \ref{one_centroid} are not isomorphic and it can be checked that their chromatic symmetric functions are not equal: the entire function is too long to print here, but when written in the power-sum basis the tree on the left has $-9p_{(8,5,1,1)}$ as a summand while the tree on the right has $-8p_{(8,5,1,1)}$ as a summand.   

What Theorem \ref{barmagnet2} and Corollary \ref{cor:lastcor} {\emph{do}} imply is that if there does exist a pair of distinct trees with the same chromatic symmetric function and a single centroid, then it must also share this special property with the trees in Example \ref{one_centroid}. That is, the partitions appearing as the $\theta_T$-images of 2-element sets must agree, though their relative location within the table might be ``scrambled.'' 

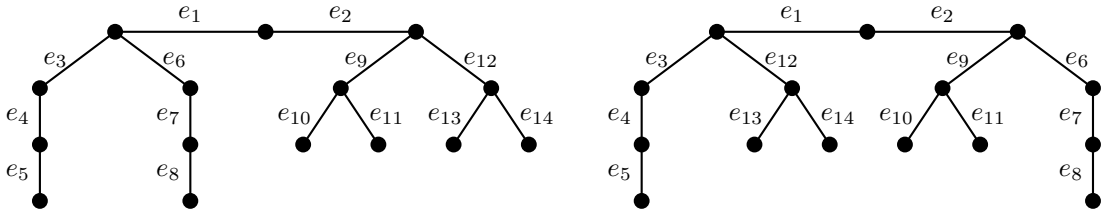
\begin{figure}[ht]
\centering
\begin{tikzpicture}[style=thick]

\draw (1,.75) coordinate (A13) node[left=3pt] {};
\draw (3,.75) coordinate (A14) node[left=3pt] {};
\draw (9,.75) coordinate (B13) node[left=3pt] {};
\draw (15,.75) coordinate (B14) node[left=3pt] {};

\draw (1,1.5) coordinate (A7) node[left=3pt] {};
\draw (3,1.5) coordinate (A8) node[left=3pt] {};
\draw (4.5,1.5) coordinate (A9) node[left=3pt] {};
\draw (5.5,1.5) coordinate (A10) node[left=3pt] {};
\draw (6.5,1.5) coordinate (A11) node[left=3pt] {};
\draw (7.5,1.5) coordinate (A12) node[left=3pt] {};
\draw (9,1.5) coordinate (B7) node[left=3pt] {};
\draw (10.5,1.5) coordinate (B8) node[left=3pt] {};
\draw (11.5,1.5) coordinate (B9) node[left=3pt] {};
\draw (12.5,1.5) coordinate (B10) node[left=3pt] {};
\draw (13.5,1.5) coordinate (B11) node[left=3pt] {};
\draw (15,1.5) coordinate (B12) node[left=3pt] {};

\draw (1,2.25) coordinate (A3) node[left=3pt] {};
\draw (3,2.25) coordinate (A4) node[left=3pt] {};
\draw (5,2.25) coordinate (A5) node[left=3pt] {};
\draw (7,2.25) coordinate (A6) node[left=3pt] {};
\draw (9,2.25) coordinate (B3) node[left=3pt] {};
\draw (11,2.25) coordinate (B4) node[left=3pt] {};
\draw (13,2.25) coordinate (B5) node[left=3pt] {};
\draw (15,2.25) coordinate (B6) node[left=3pt] {};

\draw (2,3) coordinate (A1) node[left=3pt] {};
\draw (6,3) coordinate (A2) node[left=3pt] {};
\draw (10,3) coordinate (B1) node[left=3pt] {};
\draw (14,3) coordinate (B2) node[left=3pt] {};

\draw (4, 3) coordinate (A0) node[left=3pt] {};
\draw (12, 3) coordinate (B0) node[left=3pt] {};

\draw(A1) -- (A0) node [midway, above] {$e_1$};
\draw(A0) -- (A2) node [midway, above] {$e_2$};
\draw(A1) -- (A4) node [midway, right] {$e_6$};
\draw(A1) -- (A3) node [midway, left]  {$e_3$};
\draw(A2) -- (A5) node [midway, left]  {$e_9$};
\draw(A2) -- (A6) node [midway, right] {$e_{12}$};
\draw(A3) -- (A7) node [midway, left]  {$e_4$};
\draw(A4) -- (A8) node [midway, left]  {$e_7$};
\draw(A5) -- (A9) node [midway, left]  {$e_{10}$};
\draw(A5) -- (A10) node [midway, right] {$e_{11}$};
\draw(A6) -- (A11) node [midway, left] {$e_{13}$};
\draw(A6) -- (A12) node [midway, right] {$e_{14}$};
\draw(A7) -- (A13) node [midway, left] {$e_5$};
\draw(A8) -- (A14) node [midway, left] {$e_8$};

\draw(B1) -- (B0) node [midway, above] {$e_1$};
\draw(B0) -- (B2) node [midway, above] {$e_2$};
\draw(B1) -- (B4) node [midway, right] {$e_{12}$};
\draw(B1) -- (B3) node [midway, left] {$e_3$};
\draw(B2) -- (B5) node [midway, left] {$e_9$};
\draw(B2) -- (B6) node [midway, right] {$e_6$};
\draw(B3) -- (B7) node [midway, left] {$e_4$};
\draw(B4) -- (B8) node [midway, left] {$e_{13}$};
\draw(B4) -- (B9) node [midway, right] {$e_{14}$};
\draw(B5) -- (B10) node [midway, left] {$e_{10}$};
\draw(B5) -- (B11) node [midway, right] {$e_{11}$};
\draw(B6) -- (B12) node [midway, left] {$e_7$};
\draw(B7) -- (B13) node [midway, left] {$e_5$};
\draw(B12) -- (B14) node [midway, left] {$e_8$};

  \fill (A0) circle (3pt)
  (A1) circle (3pt) 
  (A2) circle (3pt)
  (A3) circle (3pt)
  (A4) circle (3pt)
  (A5) circle (3pt)
  (A6) circle (3pt)
  (A7) circle (3pt)
  (A8) circle (3pt)
  (A9) circle (3pt)
  (A10) circle (3pt)
  (A11) circle (3pt)
  (A12) circle (3pt)
  (A13) circle (3pt)
  (A14) circle (3pt)
  (B0) circle (3pt)
  (B1) circle (3pt) 
  (B2) circle (3pt)
  (B3) circle (3pt)
  (B4) circle (3pt)
  (B5) circle (3pt)
  (B6) circle (3pt)
  (B7) circle (3pt)
  (B8) circle (3pt)
  (B9) circle (3pt)
  (B10) circle (3pt)
  (B11) circle (3pt)
  (B12) circle (3pt)
  (B13) circle (3pt)
  (B14) circle (3pt);
\end{tikzpicture}
\caption{Two trees with nearly identical $\theta_T$ images of 2-element sets}
\label{one_centroid}
\end{figure}
\tiny
\begin{center}
\begin{tabular}{@{}c@{}||@{\ }c@{\ }|@{\ }c@{\ }|@{\ }c@{\ }|@{\ }c@{\ }|@{\ }c@{\ }|@{\ }c@{\ }|@{\ }c@{\ }|@{\ }c@{\ }|@{\ }c@{\ }|@{\ }c@{\ }|@{\ }c@{\ }|@{\ }c@{\ }|@{\ }c@{\ }}
 & $e_1$ & $e_2$ & $e_3$ & $e_4$ & $e_5$ & $e_6$ & $e_7$ & $e_8$ & $e_9$ & $e_{10}$ & $e_{11}$ & $e_{12}$ & $e_{13}$\\ \hline \hline

$e_1$ & & & & & & & & & & & & & \\ \hline
$e_2$ & $(7, 7, 1)$ & & & & & & & & & & & & \\ \hline
$e_3$ & $(8, 4, 3)$ & $(7, 5, 3)$ & & & & & & & & & & & \\ \hline
$e_4$ & $(8, 5, 2)$ & $(7, 6, 2)$ & $(12, 2, 1)$ & & & & & & & & & & \\ \hline
$e_5$ & $(8, 6, 1)$ & $(7, 7, 1)$ & $(12, 2, 1)$ & $(13, 1, 1)$ & & & & & & & & & \\ \hline
$e_6$ & $(8, 4, 3)$ & $(7, 5, 3)$ & $(9, 3, 3)$ & $(10, 3, 2)$ & $(11, 3, 1)$ & & & & & & & & \\ \hline
$e_7$ & $(8, 5, 2)$ & $(7, 6, 2)$ & $(10, 3, 2)$ & $(11, 2, 2)$ & $(12, 2, 1)$ & $(12, 2, 1)$ & & & & & & & \\ \hline
$e_8$ & $(8, 6, 1)$ & $(7, 7, 1)$ & $(11, 3, 1)$ & $(12, 2, 1)$ & $(13, 1, 1)$ & $(12, 2, 1)$ & $(13, 1, 1)$ & & & & & & \\ \hline
$e_9$ & $(7, 5, 3)$ & $(8, 4, 3)$ & $(9, 3, 3)$ & $(10, 3, 2)$ & $(11, 3, 1)$ & $(9, 3, 3)$ & $(10, 3, 2)$ & $(11, 3, 1)$ & & & & & \\ \hline
$e_{10}$ & $(7, 7, 1)$ & $(8, 6, 1)$ & $(11, 3, 1)$ & $(12, 2, 1)$ & $(13, 1, 1)$ & $(11, 3, 1)$ & $(12, 2, 1)$ & $(13, 1, 1)$ & $(12, 2, 1)$ & & & & \\ \hline
$e_{11}$ & $(7, 7, 1)$ & $(8, 6, 1)$ & $(11, 3, 1)$ & $(12, 2, 1)$ & $(13, 1, 1)$ & $(11, 3, 1)$ & $(12, 2, 1)$ & $(13, 1, 1)$ & $(12, 2, 1)$ & $(13, 1, 1)$ & & & \\ \hline
$e_{12}$ & $(7, 5, 3)$ & $(8, 4, 3)$ & $(9, 3, 3)$ & $(10, 3, 2)$ & $(11, 3, 1)$ & $(9, 3, 3)$ & $(10, 3, 2)$ & $(11, 3, 1)$ & $(9, 3, 3)$ & $(11, 3, 1)$ & $(11, 3, 1)$ & & \\ \hline
$e_{13}$ & $(7, 7, 1)$ & $(8, 6, 1)$ & $(11, 3, 1)$ & $(12, 2, 1)$ & $(13, 1, 1)$ & $(11, 3, 1)$ & $(12, 2, 1)$ & $(13, 1, 1)$ & $(11, 3, 1)$ & $(13, 1, 1)$ & $(13, 1, 1)$ & $(12, 2, 1)$ & \\ \hline
$e_{14}$ & $(7, 7, 1)$ & $(8, 6, 1)$ & $(11, 3, 1)$ & $(12, 2, 1)$ & $(13, 1, 1)$ & $(11, 3, 1)$ & $(12, 2, 1)$ & $(13, 1, 1)$ & $(11, 3, 1)$ & $(13, 1, 1)$ & $(13, 1, 1)$ & $(12, 2, 1)$ & $(13, 1, 1)$
\label{Table 2}
\end{tabular}
\end{center}
\normalsize

\tiny
\begin{center}
\begin{tabular}{@{}c@{}||@{\ }c@{\ }|@{\ }c@{\ }|@{\ }c@{\ }|@{\ }c@{\ }|@{\ }c@{\ }|@{\ }c@{\ }|@{\ }c@{\ }|@{\ }c@{\ }|@{\ }c@{\ }|@{\ }c@{\ }|@{\ }c@{\ }|@{\ }c@{\ }|@{\ }c@{\ }}
 & $e_1$ & $e_2$ & $e_3$ & $e_4$ & $e_5$ & $e_6$ & $e_7$ & $e_8$ & $e_9$ & $e_{10}$ & $e_{11}$ & $e_{12}$ & $e_{13}$\\ \hline \hline

$e_1$ & & & & & & & & & & & & & \\ \hline
$e_2$ & $(7, 7, 1)$ & & & & & & & & & & & & \\ \hline
$e_3$ & $(8, 4, 3)$ & $(7, 5, 3)$ & & & & & & & & & & & \\ \hline
$e_4$ & $(8, 5, 2)$ & $(7, 6, 2)$ & $(12, 2, 1)$ & & & & & & & & & & \\ \hline
$e_5$ & $(8, 6, 1)$ & $(7, 7, 1)$ & $(12, 2, 1)$ & $(13, 1, 1)$ & & & & & & & & & \\ \hline
$e_6$ & $(7, 5, 3)$ & $(8, 4, 3)$ & $(9, 3, 3)$ & $(10, 3, 2)$ & $(11, 3, 1)$ & & & & & & & & \\ \hline
$e_7$ & $(7, 6, 2)$ & $(8, 5, 2)$ & $(10, 3, 2)$ & $(11, 2, 2)$ & $(12, 2, 1)$ & $(12, 2, 1)$ & & & & & & & \\ \hline
$e_8$ & $(7, 7, 1)$ & $(8, 6, 1)$ & $(11, 3, 1)$ & $(12, 2, 1)$ & $(13, 1, 1)$ & $(12, 2, 1)$ & $(13, 1, 1)$ & & & & & & \\ \hline
$e_9$ & $(7, 5, 3)$ & $(8, 4, 3)$ & $(9, 3, 3)$ & $(10, 3, 2)$ & $(11, 3, 1)$ & $(9, 3, 3)$ & $(10, 3, 2)$ & $(11, 3, 1)$ & & & & & \\ \hline
$e_{10}$ & $(7, 7, 1)$ & $(8, 6, 1)$ & $(11, 3, 1)$ & $(12, 2, 1)$ & $(13, 1, 1)$ & $(11, 3, 1)$ & $(12, 2, 1)$ & $(13, 1, 1)$ & $(12, 2, 1)$ & & & & \\ \hline
$e_{11}$ & $(7, 7, 1)$ & $(8, 6, 1)$ & $(11, 3, 1)$ & $(12, 2, 1)$ & $(13, 1, 1)$ & $(11, 3, 1)$ & $(12, 2, 1)$ & $(13, 1, 1)$ & $(12, 2, 1)$ & $(13, 1, 1)$ & & & \\ \hline
$e_{12}$ & $(8, 4, 3)$ & $(7, 5, 3)$ & $(9, 3, 3)$ & $(10, 3, 2)$ & $(11, 3, 1)$ & $(9, 3, 3)$ & $(10, 3, 2)$ & $(11, 3, 1)$ & $(9, 3, 3)$ & $(11, 3, 1)$ & $(11, 3, 1)$ & & \\ \hline
$e_{13}$ & $(8, 6, 1)$ & $(7, 7, 1)$ & $(11, 3, 1)$ & $(12, 2, 1)$ & $(13, 1, 1)$ & $(11, 3, 1)$ & $(12, 2, 1)$ & $(13, 1, 1)$ & $(11, 3, 1)$ & $(13, 1, 1)$ & $(13, 1, 1)$ & $(12, 2, 1)$ & \\ \hline
$e_{14}$ & $(8, 6, 1)$ & $(7, 7, 1)$ & $(11, 3, 1)$ & $(12, 2, 1)$ & $(13, 1, 1)$ & $(11, 3, 1)$ & $(12, 2, 1)$ & $(13, 1, 1)$ & $(11, 3, 1)$ & $(13, 1, 1)$ & $(13, 1, 1)$ & $(12, 2, 1)$ & $(13, 1, 1)$
\label{Table 3}
\end{tabular}
\end{center}
\normalsize
\begin{center}
Figure 5.8: $\theta_T$ images of 2-element sets for $T_1$ (above) and $T_2$ (below)
\end{center}

\end{document}